\documentclass[11pt,twoside, leqno]{article}

\usepackage{amsthm}
\usepackage{amssymb}
\usepackage{amsmath}
\usepackage{mathrsfs}
\usepackage{txfonts}
\usepackage{graphics}
\usepackage[Symbol]{upgreek}

\usepackage{extarrows}
\usepackage{graphics}
\usepackage{epsfig}
\usepackage{color}
\usepackage{verbatim}
\usepackage{hyperref}
\usepackage[mathlines,pagewise]{lineno}
\usepackage{amssymb,amsmath,amsfonts,amsthm,color,mathrsfs}
\usepackage[Symbol]{upgreek}
\usepackage{txfonts}
\usepackage[nottoc,notlot,notlof]{tocbibind}
\usepackage[active]{srcltx}
\usepackage{picinpar,graphicx} 
\usepackage{bbm}

\allowdisplaybreaks \pagestyle{myheadings}
\pagestyle{myheadings}\markboth{\sc R. Jiang \& B. Li} {\sc On solutions
to the Schr\"odinger equation with BMO traces}

\textwidth=15cm \textheight=21.0cm \oddsidemargin 0.45cm
\evensidemargin 0.45cm

\parindent=13pt

\def\rr{{\mathbb R}}
\def\rn{{{\rr}^n}}

\def\cn{{\mathbb N}}

\def\D{{\mathscr D}}

\def\ccc{{\mathcal C}}
\def\L{{\mathcal{L}}}
\def\LV{{\mathscr{L}}}
\def\HV{{\mathscr{H}}}
\def\PV{{\mathscr{P}}}

\def\fz{\infty}

\def\supp{{\mathop\mathrm{\,supp\,}}}

\def\loc{{\mathop\mathrm{\,loc\,}}}

\def\lz{\lambda}
\def\dz{\delta}

\def\gz{{\gamma}}

\def\vz{\varphi}

\def\r{\right}
\def\lf{\left}

\def\d{\mathrm{d}}
\def\e{\mathrm{e}}
\def\E{\mathscr{E}}
\def\BMO{{\mathrm{BMO}}}
\def\HMO{{\mathrm{HMO}}}

\def\eqref#1{(\ref{#1})}

\newtheorem{theorem}{Theorem}[section]
\newtheorem{lemma}[theorem]{Lemma}
\newtheorem{corollary}[theorem]{Corollary}
\newtheorem{proposition}[theorem]{Proposition}
\theoremstyle{definition}
\newtheorem{remark}[theorem]{Remark}
\newtheorem{definition}[theorem]{Definition}

\numberwithin{equation}{section}

\begin{document}
\arraycolsep=1pt
\author{Renjin Jiang} \arraycolsep=1pt
\arraycolsep=1pt

\title{\bf\Large On the Dirichlet problem for the
Schr\"odinger equation with boundary value in BMO space
\footnotetext{\hspace{-0.35cm}
2010 \emph{Mathematics Subject Classification}. 43A85, 42B35, 35J25.
\endgraf
{\it Key words and phrases}: Schr\"odinger equation, BMO, Carleson measure, metric measure space.
\endgraf
}}
\author{Renjin Jiang and Bo Li}
\date{}

\maketitle

\begin{center}
\begin{minipage}{11cm}\small
{\noindent{\bf Abstract.}
Let $(X,d,\mu)$ be a metric measure space satisfying a $Q$-doubling condition, $Q>1$, and
an $L^2$-Poincar\'{e} inequality. Let $\mathscr{L}=\mathcal{L}+V$ be a Schr\"odinger operator on $X$,
where $\mathcal{L}$ is a non-negative operator generalized by a Dirichlet form,
and $V$ is a non-negative Muckenhoupt weight    that satisfies a reverse H\"older condition $RH_q$ for some $q\ge (Q+1)/2$. We show that a solution to $(\mathscr{L}-\partial_t^2)u=0$ on $X\times \mathbb{R}_+$ satisfies the Carleson condition,
$$\sup_{B(x_B,r_B)}\frac{1}{\mu(B(x_B,r_B))}\int_{0}^{r_B}\int_{B(x_B,r_B)}|t\nabla u(x,t)|^2\frac{\d\mu\d t}{t}<\infty,$$
if and only if, $u$ can be represented as the Poisson
integral of the Schr\"odinger operator $\mathscr{L}$ with trace in the BMO space  associated with  $\mathscr{L}$.
}
\end{minipage}

\end{center}


\section{Introduction and main results}
\hskip\parindent
The study of harmonic extension of a function is one of the basic tools in harmonic analysis ever since
the seminar work of Stein and Weiss \cite{SW1960}. On the other hand, the Dirichlet problem
of elliptic equations is one of the basic problems in PDEs.
The harmonic extension of a BMO function plays a key role in Fefferman-Stein duality of
Hardy and BMO spaces; see \cite{FS1972} and also \cite{Fef1971}.
It was proved in \cite{FS1972}  that,
the harmonic function $u(x,t)=e^{-t\sqrt{-\Delta}}f(x)$ satisfies the following Carleson condition
\begin{align}\label{carleson}
\sup_{x_B,r_B} \frac{1}{r_B^n} \int_{0}^{r_{B}} \int_{B(x_B,r_B)} |t\nabla u(x, t)|^{2} \d x \frac{\d t}{t} \le C<\infty,
\end{align}
if and only if $f\in  {\BMO(\rn)}$,
where $\nabla=(\partial_t,\partial_{x_1},\cdots,\partial_{x_n})$ and $\Delta=\sum_{k=1}^n\partial^2_{x_k}$. Later on,
Fabes, Johnson and Neri \cite{FJN1976} found the Carleson condition \eqref{carleson}
actually characterize all harmonic functions $u(x,t)$ on $\mathbb{R}^{n+1}_+$ with boundary value
in ${\BMO(\rn)}$.
The study of this topic has widely extended to different settings including,
degenerate elliptic equations and systems,  elliptic equations and systems with complex coefficients,
also  Schr\"odinger equations, etc,
see \cite{ARR2015,Bar2015,DKP2011,DYZ2014,HLM2019,HL2018,HKMP2015,HLL2020,JXY2016,MMMM2019,STY2018} for instance.

Very recently, Duong, Yan and Zhang \cite{DYZ2014} extended non-trivially the study to the Schr\"odinger operator on $\rr^n$. More precisely, they proved that, if $L=-\Delta+V$, where the non-negative potential $V$
satisfies a reverse H\"older condition $RH_q$ for $q\ge n$,
then a solution $u$ to $-\partial_t^2 u+Lu=0$ on $\rr^{n+1}_+$ satisfies \eqref{carleson},
if and only if, $u$ can be represented as $u=e^{-t\sqrt L}f$, where $f$ is
in BMO space associated  to the operator $L$. As the solution $u$ is no longer smooth compared to
harmonic functions (cf. \cite{FJN1976}), their arguments are rather non-trivial.
Note that  recently developed theory on  Hardy and BMO spaces associated with operators (cf. \cite{DY2005-1,DY2005-2,DGMTZ2005,DzZi1999,DzZi2002,HLMMY2011,HMM2011,SY2016}),
and regularities to the Schr\"odinger equations (cf. \cite{Sh1999,Sh1995}),
play a role in the arguments of \cite{DYZ2014}.

Our main purpose is to extend the study of this topic to manifolds as well as general metric spaces,
where related Hardy and BMO spaces have been widely studied; see \cite{BDL18,Dz2005,HLMMY2011,SY2018,YZ2011}
for instance.
Note that the condition $V\in RH_q$ for $q\ge n$ assumed in \cite{DYZ2014} is to ensure
that one has a pointwise bound for the gradient of  heat kernel of the Schr\"odinger operator.
Even without the potential $V$, such bound
does not hold in general, unless assuming a strong
non-negative curvature condition or group structure (cf. \cite{Li1999,LinLiu2011}). Indeed, the pointwise bound of the gradient of heat kernel  fails already for uniformly elliptic operators; see \cite{acdh,AT1998,CJKS2020,JL2020} for instance.

To state our result, let us briefly describe our settings; see \cite{bd59,fot,GSC,St1996}
for more details.
Let $X$ be a  locally compact, separable, metrisable, and connected
space, $\mu$ be a  Borel measure that is finite on compact sets and strictly positive on non-empty open sets.
We consider {a strongly} local and regular Dirichlet form  $\E$ on $L^2(X,\mu)$ with dense domain $\D\subset L^2(X,\mu)$ (see \cite{fot,GSC}). We assume that $\E$  admits a ``{\it carr\'e du champ}",
which means that $\Gamma(f,g)$ is absolutely continuous with respect to $\mu$,
for all $f, g\in \D$. In what follows, we denote by  $\langle \nabla_x f,\nabla_x
g\rangle$  the energy density
$\frac{\d\Gamma(f,g)}{\d\mu}$, and by $|\nabla_x f|$  the square root
of $\frac{\d \Gamma(f,f)}{\d\mu}$.
We equip the space $(X,\mu,\E)$ with the intrinsic
(pseudo-)distance on $X$ associated to $\E$, defined by
$$d(x, y):=\sup\left\{f(x)-f(y):\, f\in\D_\loc\cap\ccc(X),\, |\nabla_x f|\le 1 \ \mbox{a.e.}\right\},$$
where $\ccc(X)$ denotes the class of all continuous functions on $X$.
We assume  that $d$ is  a true distance
 and that the topology induced by $d$ is equivalent to the original
topology on $X$.

The domain $\D$ endowed with the norm
$\sqrt{\|f\|^2_{2}+\E(f,f)}$ is a Hilbert space which we denote by $W^{1,2}(X)$. For an open set $U\subset X$,  the Sobolev space  $W^{1,p}(U)$ and  $W^{1,p}_0(U)$ is defined in a usual sense; see
\cite{BM1995,St1995,St1996} for instance.
Corresponding to such a Dirichlet form $\E$, there exists an operator  denoted by $\mathcal{L} $,
acting on a dense domain $\mathscr{D}(\mathcal{L})$ in
$L^2(X,\mu)$, $\mathscr{D}(\mathcal{L})\subset W^{1,2}(X)$, such that for all
$f\in \mathscr{D}(\mathcal{L})$ and each
$g\in W^{1,2}(X)$,
$$\int_X f(x)\mathcal{L} g(x)\d\mu(x)=\mathscr{E}(f,g).$$

Suppose that $0\le V\in RH_q(X)\cap A_\infty(X)$, where $RH_q(X)$ is the class of functions satisfying
reverse $q$-H\"older inequality and $A_\infty(X)$ is the class of all Muckenhoupt weights (cf. \cite{Mu1972,ST1989}); see Subsection 2.1
for more details. In the paper, we shall consider the Schr\"odinger operator
$$\mathscr L=\L+V.$$
Throughout this paper, we denote by $H_t=e^{-t\L}$ and $\mathscr{H}_t=e^{-t\LV}$ the heat semigroups,
$P_t=e^{-t\sqrt \L}$ and $\mathscr{P}_t=e^{-t\sqrt \LV}$ the Poisson semigroups, associated
with $\L$ and $\LV$, respectively.

Let $B(x,r)$ denote the open ball with center $x$ and radius $r$ with
respect to the distance $d$, and set $\lz B(x,r):=B(x,\lz r).$ We assume that $\mu$ is doubling, i.e.,
there exists a constant $C_d>0$ such that, for any ball $B=B(x,r)\subset X$,
$$\mu(2B(x,r))\le C_d\mu(B(x,r))<\fz.\leqno(D)$$
This implies that there exists $Q>1$ such that for any $0<r<R<\infty$ and $x\in X$,
$$\mu(B(x,R))\le C\left(\frac{R}{r}\right)^Q\mu(B(x,r)).$$

An $L^2$-Poincar\'{e} inequality is needed.
Precisely, we assume that $(X,d,\mu,\E)$ supports an $L^2$-Poincar\'{e} inequality, namely,
there exists a constant $C_P>0$ such that, for all balls $B=B(x,r)$, and $W^{1,2}(B)$ functions $f$,
$$
\lf(\fint_{B(x,r)}|f-f_B|^2\d\mu \r)^{1/2}\le C_Pr\lf(\fint_{B(x,r)}|\nabla_x f|^2\d\mu\r)^{1/2},\leqno(P_2)
$$
where and in what follows,
\begin{align*}
f_B=\fint_Bf\d\mu=\frac{1}{\mu(B)}\int_Bf\d\mu.
\end{align*}

We next recall the definition of $\LV_+$-harmonic functions on the upper half-space.
We say that $u\in W^{1,2}(X\times\rr_+)$ is an  $\LV_{+}$-harmonic function
in $X\times\rr_+$, if
$$\int_0^\fz\int_X \partial_t u\partial_t \phi \d \mu\d t+\int_0^\fz\int_X\langle \nabla_x u, \nabla_x \phi\rangle \d \mu\d t+\int_0^\fz\int_X Vu\phi \d \mu\d t=0$$
holds for all Lipschitz functions $\phi$ with compact support in $X\times\rr_+$.

The space $\HMO_\LV(X\times\rr_+)$ is defined as the class of all $\LV_+$-harmonic functions $u$,
that satisfies
$$\|u\|_{\HMO_\LV}:=\sup_{B(x_B,r_B)}\left(\int_0^{r_B}\fint_{B(x_B,r_B)}|t\nabla u(x,t)|^2\d\mu(x)\frac{\d t}{t}\right)^{1/2}<\infty.$$
We next introduce the critical function $\rho(x)$,
which was first introduced by Shen \cite[Definition 1.3]{Sh1995}, and has been
 studied in \cite{BDL18,DGMTZ2005,DzZi1999,YZ2011} for instance. For all $x\in X$, let
\begin{align}\label{crit-func-defn}
\rho(x):=\sup\lf\{r>0:\ \frac{r^2}{\mu(B(x,r))}\int_{B(x,r)}V\d\mu\le1\r\}.
\end{align}
Throughout the paper, we assume that $V\neq 0$ is a non-trivial potential,
which implies that $0<\rho(x)<\infty$ for any $x\in X$.
We say that a locally integrable function $f$ is in the space $\BMO_\LV(X)$ if
$$\|f\|_{\BMO_\LV}:=\sup_{{ B=B(x,r): \,r<\rho(x)}}\fint_{B}|f-f_B|\d\mu+\sup_{{ B=B(x,r):\,r\ge \rho(x)}}\fint_{B}|f|\d\mu <\infty.$$

Below is our main result.
\begin{theorem}\label{thm2}
Let $(X,d,\mu,\mathscr{E})$ be a complete Dirichlet metric space satisfying $(D)$
 with $Q>1$, and admitting an $L^2$-Poincar\'{e} inequality.
 Suppose $0\le V\in RH_{q}(X)\cap A_\infty(X)$ with $q\ge (Q+1)/2$, and $\LV=\L+V$.

 (i) If $u\in \HMO_{\LV}(X\times \rr_+)$, then there exists a function $f\in \BMO_\LV(X)$ such that $u(x,t)=\PV_tf(x)=e^{-t\sqrt\LV}f(x)$. Moreover there exists a constant $C>1$, independent of $u$, such that
$$\|f\|_{\BMO_\LV}\le C\|u\|_{\HMO_{\LV}}.$$

(ii) If $f\in \BMO_\LV(X)$, then $u(x,t)=\PV_tf(x)\in \HMO_{\LV}(X\times \rr_+)$,
and there exists  a constant $C>1$, independent of $f$, such that
$$\|u\|_{\HMO_{\LV}}\le C\|f\|_{\BMO_\LV}.$$
 \end{theorem}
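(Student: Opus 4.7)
The plan is to prove the two directions (i) and (ii) separately, tying them together through the duality between the Hardy space $H^1_{\LV}(X)$ associated with $\LV$ and $\BMO_{\LV}(X)$, and through sharp estimates on the Poisson semigroup $\PV_t$ that combine standard tools in the metric setting (doubling, the $L^2$-Poincar\'e inequality, Davies-Gaffney off-diagonal bounds for $\HV_t$) with the extra exponential decay induced by $V$ through the critical function $\rho$.

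For (ii), the aim is to show, uniformly in $B=B(x_B,r_B)$,
\begin{align*}
\int_0^{r_B}\fint_B |t\nabla \PV_tf(x)|^2\,\d\mu(x)\frac{\d t}{t} \ls \|f\|_{\BMO_{\LV}}^2.
\end{align*}
I would split according to whether $r_B<\rho(x_B)$ or $r_B\ge\rho(x_B)$, and further decompose $f = f_{2B} + (f-f_{2B})\chi_{4B} + (f-f_{2B})\chi_{X\setminus 4B}$. The local oscillation piece $(f-f_{2B})\chi_{4B}$ is handled by $L^2$-boundedness of the vertical square function associated with $\PV_t$, reduced via the identity $\E(\PV_tg,\PV_tg) = \|\sqrt{\LV}\PV_tg\|_2^2$ and combined with the Poincar\'e inequality. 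The far-field piece is treated via $L^2$ (rather than pointwise) estimates on $t\nabla \PV_t$, obtained from off-diagonal bounds on $\HV_t$ by subordination. The constant piece $f_{2B}$ is the genuinely new ingredient compared with the harmonic case: since $\PV_t 1 \neq 1$, the quantity $t\nabla \PV_t 1$ must be controlled in terms of $V$, and this is where $V\in RH_q$ at $q\ge (Q+1)/2$ enters through Shen-type fundamental-solution bounds.

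For (i), given $u\in \HMO_{\LV}(X\times \rr_+)$, I would construct $f\in \BMO_{\LV}(X)$ with $u(x,t)=\PV_t f(x)$ in three stages. First, extract quantitative regularity from the Carleson hypothesis: Caccioppoli inequalities and Moser-type iteration for $\LV_+$-harmonic functions should yield uniform local $W^{1,2}$-bounds on Whitney boxes, mild growth of the form $|u(x,t)|\ls (1+t)\|u\|_{\HMO_{\LV}}$, and $L^2$-decay of $\partial_t u$ and $\nabla_x u$ as $t\to\fz$. Second, define $f$ via duality with $H^1_{\LV}$-atoms (or molecules): for an atom $a$ set
\begin{align*}
\langle f, a\rangle := 2\int_0^{\fz}\int_X \partial_t u(x,t)\,\PV_t a(x)\,\d\mu(x)\,\d t
\end{align*}
(or an $\ez$-truncated version, then let $\ez\to 0$), and verify by tent-space/Carleson duality that $|\langle f,a\rangle|\ls \|u\|_{\HMO_{\LV}}$, identifying $f$ with an element of $(H^1_{\LV})^\ast=\BMO_{\LV}$. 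Third, show $u(x,t)=\PV_t f(x)$ by applying a Calder\'on-type reproducing formula at the level of semigroup identities, then invoking uniqueness for $\LV_+$-harmonic functions that vanish at both boundaries of a truncated strip in the appropriate tent-space sense.

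The main obstacle is the absence of pointwise gradient bounds on the kernels of $\PV_t$ and $\HV_t$. In Duong-Yan-Zhang this is what forced $q\ge n$, but here such pointwise bounds may fail even with $V\equiv 0$. Every such step must therefore be replaced by an $L^2$ off-diagonal analogue (with exponential decay supplied by $V$), and the resulting losses are compensated by the reverse H\"older exponent $q\ge (Q+1)/2$ together with the $L^2$-Poincar\'e inequality. A second, more delicate hurdle is the construction and identification of the trace $f$: without global $W^{1,2}$-control on $u$ one must work with Calder\'on reproducing identities on truncated strips $X\times(\ez,1/\ez)$ and justify the $\ez\to 0$ limit using only the square-function hypothesis on $u$, which is where I expect the bulk of the technical work to lie.
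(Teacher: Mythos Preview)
Your outline is broadly on the right track, but there are two points where it diverges from the paper in ways that matter.

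\textbf{Where $q\ge (Q+1)/2$ is actually used.} You locate the threshold $q\ge(Q+1)/2$ in part (ii), at the constant piece $t\nabla\PV_t 1$. In the paper that estimate needs only $q>\max\{1,Q/2\}$ (see Proposition~\ref{bmoToHmo} and the bound $|t\partial_t e^{-t\sqrt{\LV}}(1)|\le C(t/\rho(x))^\delta$ with $\delta<\min\{1,2-Q/q\}$). The stronger exponent $(Q+1)/2$ is used \emph{only} in part (i): one must view $\LV_+=-\partial_t^2+\LV$ as a Schr\"odinger operator on the $(Q{+}1)$-doubling product space $X\times\rr$, and the local H\"older regularity and the Liouville theorem there require $V\in RH_q(X\times\rr)$ with $q>(Q+1)/2$.

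\textbf{Part (ii): a genuinely different device for the spatial gradient.} Your plan attacks $|t\nabla_x\PV_tf|^2$ via $L^2$ off-diagonal bounds for $t\nabla\PV_t$ obtained by subordination from Davies--Gaffney estimates. That is plausible, but the paper bypasses gradient off-diagonal estimates altogether: Proposition~\ref{key-cacc} is a Caccioppoli-type integration by parts that turns $\int_0^{r_B}\int_B|t\nabla_x\PV_tg|^2\,\d\mu\,\d t/t$ into quantities involving only $|t^2\partial_t^2\PV_tg|$, $|\PV_tg|$, $|\PV_tg-c_0|$, and a potential term $t^2V|\PV_tg||\PV_tg-c_0|$. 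All of these have \emph{pointwise} kernel bounds (Proposition~\ref{est-poisson-kernel}), so the far-away piece and the constant piece reduce to elementary integrals. This is arguably the paper's cleanest idea and explains why (ii) needs only $q>\max\{1,Q/2\}$.

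\textbf{Part (i): the missing Liouville step.} Your construction of $f$ via duality and tent-space pairing is close in spirit to what the paper does (Lemmas~\ref{reproducing-formula} and~\ref{existence-bmo}, followed by Banach--Alaoglu). The gap is the identification $u(x,t)=\PV_tf(x)$. The paper does not use a strip-uniqueness argument in the tent-space sense; instead it proves a Liouville theorem (Proposition~\ref{Liouville}, Corollary~\ref{Liouville-upper-space}): any solution of $(-\partial_t^2+\LV)w=0$ on $X\times\rr$ with $\int\!\!\int |w|^2(1+t+d)^{-n}\mu(B)^{-1}<\infty$ must vanish identically. This is used twice: first to show the semigroup identity $u(x,t+s)=\PV_t(u(\cdot,s))(x)$ (Proposition~\ref{Liouville-poisson}), and then to kill the residual $h(x)=u(x,t)-\PV_tf(x)$. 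The proof of Liouville relies essentially on the extra exponential decay $\exp\{-\epsilon(1+\sqrt t/\rho(x))^{1/(k_0+1)}\}$ in the Schr\"odinger heat kernel (Proposition~\ref{est-heat-kernel}(i)), which in turn comes from the Fefferman--Phong inequality and the critical function $\rho$. Your proposal does not supply an analogue of this, and the vague ``uniqueness on truncated strips'' would not obviously rule out solutions with mild growth. This is the place where the potential $V$ (and the condition $q\ge(Q+1)/2$) is genuinely indispensable.
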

For the classical Schr\"odinger operator $L=-\Delta+V$ on $\rr^n$,
Theorem \ref{thm2} improves the index $q\ge n$ in \cite{DYZ2014} to $q\ge (n+1)/2$.
Our  result  also applies to the uniformly elliptic operator, $L=-\mathrm{div} A\nabla+V$,
where $A=A(x)$ is an $n\times n$ matrix of real symmetric, bounded measurable coefficients, and satisfies the ellipticity condition, i.e.,  there exist constants $0<\lz\le\Lambda<\fz$ such that
$$\lz |\xi|^2\le \langle A \xi, \xi\rangle\le \Lambda |\xi|^2,\quad \forall\, \xi\in \rn.$$

The spaces $\HMO_L(\rr^{n+1}_+)$ and $\BMO_L(\rr^n)$ associated with $L$ are defined similarly to  $\BMO_\LV(X)$ and $\HMO_\LV(X\times\rr_+)$, with $X$ replaced by $\rr^n$.
Note that on $\rr^n$,
the reverse H\"older condition $V\in RH_{q}(\rr^n)$ implies that $V\in A_\infty(\rr^n)$ (cf. \cite{ST1989}).
\begin{corollary}\label{cor1.2}
Let $L=-\mathrm{div} A\nabla+V$, with $\mathrm{div} A\nabla$ being  a uniformly elliptic operator on $\rr^n$, $n\ge 2$, and $0\le V\in RH_{q}(\rr^n)$ for $q\ge (n+1)/2$.
Then  $u\in \HMO_L(\rr^{n+1}_+)$ if and only if there exists a function $f\in \BMO_L(\rr^n)$ such that $u(x,t)=e^{-t\sqrt L}f(x)$.
Moreover there exists a constant $C>1$ such that
$$C^{-1}\|f\|_{\BMO_L(\rr^n)}\le \|u\|_{\HMO_L(\rr^{n+1}_+)} \le C\|f\|_{\BMO_L(\rr^n)}.
$$
\end{corollary}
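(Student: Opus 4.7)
The plan is to deduce Corollary \ref{cor1.2} as an instance of Theorem \ref{thm2}, by verifying that the Euclidean uniformly elliptic setting with potential $V\in RH_q(\rn)$ for $q\ge (n+1)/2$ fits into the abstract Dirichlet-form framework developed in the paper.

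First, I would take $X=\rn$ equipped with the Lebesgue measure and consider the Dirichlet form
$$\mathscr{E}(f,g)=\int_{\rn}\langle A(x)\nabla f(x),\nabla g(x)\rangle\,dx, \qquad f,g\in W^{1,2}(\rn),$$
which is strongly local and regular. Thanks to the boundedness and symmetry of $A$, it admits a carr\'e du champ given by $\langle \nabla_x f,\nabla_x g\rangle = \langle A\nabla f,\nabla g\rangle$ in the sense of the paper, so $|\nabla_x f|^2=\langle A\nabla f,\nabla f\rangle$. The ellipticity bounds $\lambda|\xi|^2\le \langle A\xi,\xi\rangle\le \Lambda|\xi|^2$ imply that the intrinsic distance associated with $\mathscr{E}$ is comparable to the Euclidean distance, and therefore induces the same topology on $\rn$. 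The associated operator is exactly $\mathcal{L}=-\mathrm{div}(A\nabla)$, and thus $\mathscr{L}=\mathcal{L}+V=L$.

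Next, I would check the two structural assumptions of Theorem \ref{thm2}. The doubling condition $(D)$ holds with doubling exponent $Q=n$, since Lebesgue measure satisfies $|B(x,R)|\le (R/r)^n|B(x,r)|$ for balls in the Euclidean (hence, up to equivalence, intrinsic) metric. The $L^2$-Poincar\'e inequality $(P_2)$ is immediate: it holds for the Euclidean gradient, and by ellipticity $|\nabla f|\ls |\nabla_x f|$ pointwise, so it persists for the intrinsic gradient. The hypothesis on $V$ is $V\in RH_q(\rn)$ with $q\ge (n+1)/2=(Q+1)/2$; as noted in the paragraph preceding the corollary, $RH_q(\rn)\subset A_\infty(\rn)$, so $V\in RH_q(\rn)\cap A_\infty(\rn)$, as required.

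Having verified all hypotheses, Theorem \ref{thm2} applies and yields both directions together with the norm equivalence. Since the intrinsic gradient and Euclidean gradient are pointwise comparable by ellipticity, the Carleson quantity $|t\nabla u|^2=t^2(|\partial_t u|^2+|\nabla_x u|^2)$ used in the corollary is equivalent (up to multiplicative constants depending only on $\lambda,\Lambda$) to the one appearing in the definition of $\HMO_{\mathscr{L}}$ in the abstract setting; similarly the $\BMO_L(\rn)$ and $\BMO_{\mathscr{L}}(X)$ norms coincide under the identification $X=\rn$. Therefore the two-sided estimate transfers, completing the proof. There is essentially no serious obstacle here; the only points to watch are the metric/gradient identifications just mentioned, but they are routine consequences of the ellipticity of $A$.
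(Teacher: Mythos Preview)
Your proposal is correct and follows exactly the paper's approach: the corollary is stated as an immediate application of Theorem \ref{thm2}, with the only point to note (which you make explicit) being that on $\rr^n$ the condition $V\in RH_q(\rr^n)$ already implies $V\in A_\infty(\rr^n)$, so all hypotheses of the abstract theorem are satisfied with $Q=n$.
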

Our Theorem \ref{thm2} is applicable, if $L=-\mathrm{div} A\nabla$ satisfying certain degenerate  condition such as $A_2$-weight (cf. \cite{Mu1972,ST1989}), i.e.,
$$\lz w(x)|\xi|^2\le \langle A \xi, \xi\rangle\le \Lambda w(x)|\xi|^2,\quad \forall\, \xi\in \rn,$$
where $w\ge 0$ is an $A_2$-weight. The doubling condition is a consequence of the $A_2$ weight,
and it was known from \cite{FKS82} that an $L^2$-Poincar\'e
inequality holds.

Our result also applies to the Heisenberg group $\mathbb{H}^n$ and stratified  group.
On these spaces, a Fefferman-Stein decomposition  and a Carleson characterization of
the $\BMO_L(\mathbb{H}^n)$ space  were  established in \cite{LinLiu2011}.
Our result can be applied to more general manifolds, see \cite[Introduction]{acdh},
\cite[Section 7]{CJKS2020} and \cite{YZ2011} for more examples.

The paper is organized as follows. In Section 2, we provide basic materials
for the reverse H\"older class and Muckenhoupt weight, and properties of the critical
function $\rho$. We also provide some basic regularity results for solutions
to the Schr\"odinger equation. In Section 3, we provide estimates
for the heat and Poisson kernels of the Schr\"odinger operator,
and recall some properties for Hardy spaces associated with $\LV$.
The last two sections will be devoted to the proof of Theorem \ref{thm2}.

The letter $C$ (or $c$) will denote a positive constant that may vary from line to line but will remain independent of the main variables.

\section{Preliminaries and auxiliary tools}
\label{s2}
\hskip\parindent In this section,  we provide some basic properties of
the potential $V$, and regularity results for the Schr\"odinger equation.

\subsection{The reverse H\"{o}lder class and critical radii function}
\hskip\parindent Most of the properties that we need were proved in \cite[Section 1]{Sh1995}
 in the Euclidean space setting, and later extended to manifolds as well as metric spaces;
 see \cite{BDL18,YZ2011} for instance.

Let us first recall the reverse H\"{o}lder condition and Muckenhoupt weight.
\begin{definition}\label{RH}
\rm{(i)} A non-negative function $V$ on $X$  is said to be in the \textit{reverse H\"{o}lder $RH_q(X)$ class}
with $1< q\le\fz$,
if there exists a constant $C>0$ such that, for any ball $B\subset X$,
\begin{align*}
\lf(\fint_{B}V^q\d \mu\r)^{1/q}\le C\fint_{B}V\d \mu,
\end{align*}
with the usual modification when $q=\fz$.

(ii) A non-negative function $V$ on $X$ is said to be in the {\it{Muckenhoupt  weight class}} $A_\infty(X)$, if there exists some $1\le p<\fz$
such that
$$\sup_{B}\fint_{B}V\d\mu\left(\fint_B V^{\frac{1}{1-p}}\d\mu\right)^{p-1}\le C$$
when $p>1$, and when $p=1$
$$\sup_{B}\fint_{B}V\d\mu \left(\inf_{x\in B}V \right)^{-1}\le C.$$
Above the infimum is understood as the essential infimum.
\end{definition}

In the Euclidean setting $X=\rr^n$, it is well known that $RH_q(\rn)\subset A_\infty(\rn)$,
and every weight $V\in A_\infty(\rn)$ belongs to a reverse H\"older class $RH_p(\rn)$ for some
$p>1$. This is however not known in general metric measure space; see \cite[Chapter 1]{ST1989}.
Nevertheless, for a weight $V\in RH_q(X)\cap A_\infty(X)$, the induced measure $V\d\mu$ is also doubling (cf. \cite[Chapter 1]{ST1989}.
In what follows, we fix $C_V>0$ as the doubling constant of $V\d\mu$,
i.e.,
$$\int_{B(x,2r)}V\d\mu\le C_V\int_{B(x,r)}V\d\mu.\leqno(DV)$$

One remarkable feature about the $RH_q$ class is the self-improvement property, namely,
$V\in RH_q$ implies $V\in RH_{q+\epsilon}$ for some small $\epsilon>0$;
see \cite{ge73} for the Euclidean case and \cite[Chapter 3]{bb10} (see also \cite{ST1989}) for general cases.
This in particular implies $V\in L^{q'}_{\mathrm{loc}}(X)$ for some $q'$ strictly greater than $q$.
However, in general, the potential $V$ can be unbounded and does not belong to $L^p(X)$ for any $1\le p\le\fz$. As a model example, we could take $V(x)=|x|^2$ on $\rn$ with $n\ge3$.

By the definition of the critical function $\rho$ (see \eqref{crit-func-defn}),
there exist constants $C,c>0$ such that for any $x\in X$
\begin{align}\label{q5}
c\le\frac{\rho^2(x)}{\mu(B(x,\rho(x)))}\int_{B(x,\rho(x))}V\d\mu\le C;
\end{align}
see \cite[Subsection 2.1]{YZ2011} for instance. Moreover,
by \cite[Proposition 2.1 \& Lemma 2.1]{YZ2011}, we have the following property for the
critical function.
\begin{lemma}\label{crit-func-pro-1}
Let $V\in RH_q(X)\cap A_\infty(X)$, $q>\max\{Q/2,1\}$. Then there exist constants $C>0$ and $k_0\ge1$ such that
\begin{align*}
C^{-1}\rho(x)\lf(1+\frac{d(x,y)}{\rho(x)}\r)^{-k_0} \le \rho(y) \le C\rho(x)\lf(1+\frac{d(x,y)}{\rho(x)}\r)^{{k_0}/{(k_0+1)}},\quad\forall\, x,y\in X.
\end{align*}
\end{lemma}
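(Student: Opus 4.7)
The plan is to recast the problem in terms of the auxiliary function
$$\psi(x,r):=\frac{r^2}{\mu(B(x,r))}\int_{B(x,r)}V\,\d\mu,$$
so that $\rho(x)=\sup\{r>0:\psi(x,r)\le 1\}$ and $\psi(x,\rho(x))\sim 1$ by \eqref{q5}. The first and most important step is to establish an \emph{essential monotonicity} for $\psi(x,\cdot)$: there exists $\beta>0$ such that
$$\psi(x,r_1)\le C(r_1/r_2)^{\beta}\psi(x,r_2)\qquad\text{for all }0<r_1\le r_2.$$
I would derive this by applying H\"older's inequality on $B(x,r_1)$ to pass from $V$ to $V^q$, enlarging the integration domain, invoking the $RH_q$ bound on $B(x,r_2)$, and absorbing the resulting measure ratio via the doubling condition $(D)$; the outcome is $\beta=2-Q/q$, which is strictly positive since $q\ge(Q+1)/2>Q/2$.

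Given essential monotonicity, any $r$ with $\psi(x,r)\sim 1$ must satisfy $r\sim\rho(x)$. To compare $\rho(x)$ and $\rho(y)$ I would first handle the local case $d(x,y)\le\rho(x)$: the inclusions $B(y,r)\subset B(x,2r)$ and $B(x,r)\subset B(y,2r)$, together with the doubling conditions $(D)$ and $(DV)$, yield $\psi(y,r)\sim\psi(x,r)$, and choosing $r=\rho(x)$ forces $\rho(y)\sim\rho(x)$. This handles the regime in which $1+d(x,y)/\rho(x)$ is bounded.

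For the general case I would iterate the local comparison along a chain of points $x=x_0,x_1,\ldots,x_N=y$ with consecutive separations on the scale of the current critical radius, so that at each step $\rho$ changes by at most a fixed multiplicative constant $M\ge 1$. Bookkeeping the resulting geometric series in $\rho$ against the required total displacement $d(x,y)$ yields the polynomial dependence on $1+d(x,y)/\rho(x)$. The main subtlety I anticipate lies here: the upper and lower estimates are coupled, since substituting the upper bound with the roles of $x$ and $y$ exchanged back into the local comparison forces a self-consistency relation on the exponents, and this is precisely what produces the asymmetric pair $-k_0$ and $k_0/(k_0+1)$ for an appropriate integer $k_0\ge 1$ depending only on $M$ and $\beta$.
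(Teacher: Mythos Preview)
The paper does not prove this lemma at all; it simply cites \cite[Proposition 2.1 \& Lemma 2.1]{YZ2011}, which in turn adapts Shen's original argument \cite{Sh1995} to the metric-measure setting. Your outline is precisely that standard argument: the essential monotonicity of $\psi(x,\cdot)$ with exponent $\beta=2-Q/q$, the local comparison $\rho(y)\sim\rho(x)$ when $d(x,y)\lesssim\rho(x)$, and the chaining step that produces the polynomial growth/decay and the coupled exponents $-k_0$, $k_0/(k_0+1)$. So your proposal is correct and matches the approach in the cited reference.

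One small slip: you justify $\beta>0$ by writing ``since $q\ge(Q+1)/2$'', but the hypothesis of this lemma is only $q>\max\{Q/2,1\}$ (the stronger assumption $q\ge(Q+1)/2$ appears elsewhere in the paper, in Theorem~\ref{thm2}). This does not affect the argument, since $q>Q/2$ already gives $\beta=2-Q/q>0$, but you should quote the correct hypothesis.
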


\begin{lemma}\label{crit-func-pro-2}
Let $V\in RH_q(X)\cap A_\infty(X)$, $q>\max\{Q/2,1\}$.
There exist constants $C>0$ and $k_1=2+\log_2{C_V}$ such that, for any $x\in X$ and $r>0$,
\begin{eqnarray*}
\frac{r^2}{\mu(B(x,r))}\int_{B(x,r)}V\d\mu
\le
\lf\{\begin{array}{ll}
 \displaystyle{C\lf(\frac{r}{\rho(x)}\r)^{2-Q/q}},{\quad} &{r}<\rho(x);\\
\,\\
\displaystyle C\lf(\frac{r}{\rho(x)}\r)^{k_1},&{r}\ge\rho(x).
\end{array}\r.
\end{eqnarray*}
\end{lemma}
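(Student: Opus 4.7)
The plan is to prove both bounds by reducing to the ball $B(x,\rho(x))$, where equation \eqref{q5} gives the baseline estimate
$$\int_{B(x,\rho(x))} V\,d\mu \le C\,\frac{\mu(B(x,\rho(x)))}{\rho(x)^2}.$$
From here, the two regimes $r<\rho(x)$ and $r\ge\rho(x)$ require different mechanisms: the subcritical case exploits the reverse Hölder inequality to pass from the large ball down to the small one, while the supercritical case uses iterated doubling of the measure $V\,d\mu$ to pass from the small ball up to the large one.

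For the case $r<\rho(x)$, I would first apply Hölder's inequality on $B(x,r)$ with exponent $q$, then enlarge the integration to $B(x,\rho(x))$, and apply the reverse Hölder condition on $B(x,\rho(x))$:
\begin{align*}
\int_{B(x,r)}V\,d\mu
&\le \mu(B(x,r))^{1-1/q}\left(\int_{B(x,\rho(x))} V^q\,d\mu\right)^{1/q}\\
&\le C\,\mu(B(x,r))^{1-1/q}\mu(B(x,\rho(x)))^{1/q-1}\int_{B(x,\rho(x))}V\,d\mu.
\end{align*}
Inserting the bound from \eqref{q5} and then using the doubling condition $(D)$ in the form $\mu(B(x,\rho(x)))\le C(\rho(x)/r)^Q\mu(B(x,r))$ converts the volume ratio into the factor $(\rho(x)/r)^{Q/q}$. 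After multiplying by $r^2/\mu(B(x,r))$, a direct computation yields the desired $(r/\rho(x))^{2-Q/q}$ bound. Note that the hypothesis $q>Q/2$ ensures the exponent $2-Q/q$ is positive, consistent with the subcritical regime.

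For the case $r\ge\rho(x)$, let $k$ be the smallest nonnegative integer such that $r\le 2^k\rho(x)$; then $2^k\le 2r/\rho(x)$. Iterating the doubling condition $(DV)$ for $V\,d\mu$ exactly $k$ times gives
$$\int_{B(x,r)}V\,d\mu \le \int_{B(x,2^k\rho(x))}V\,d\mu \le C_V^k\int_{B(x,\rho(x))}V\,d\mu,$$
and since $C_V^k=2^{k\log_2 C_V}\le C(r/\rho(x))^{\log_2 C_V}$, combining with \eqref{q5} and $\mu(B(x,\rho(x)))\le\mu(B(x,r))$ produces the bound $C(r/\rho(x))^{2+\log_2 C_V}$, matching $k_1$ as stated.

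The argument is essentially routine once \eqref{q5} is in hand; there is no serious obstacle. The only mild subtlety is to make sure the reverse Hölder inequality is applied on the correct ball in the first case (we need RH on the larger $B(x,\rho(x))$, not on $B(x,r)$), and that the doubling of the ambient measure $\mu$ is used to convert the $\mu$-volume ratio into a power of $r/\rho(x)$. Both points are standard but need to be laid out in that order to avoid a logical gap.
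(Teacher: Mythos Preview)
Your proposal is correct and follows essentially the same argument as the paper: in the subcritical case you use H\"older on $B(x,r)$, enlarge to $B(x,\rho(x))$, apply $RH_q$ there, then invoke \eqref{q5} and the $Q$-doubling of $\mu$ to convert the volume ratio into $(r/\rho(x))^{2-Q/q}$; in the supercritical case you iterate the doubling $(DV)$ of $V\,d\mu$ back down to $B(x,\rho(x))$ and combine with \eqref{q5}. The paper does exactly this, with only cosmetic differences in how the dyadic index is chosen.
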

\begin{proof}
When ${r}<\rho(x)$, apply H\"{o}lder's inequality, the $RH_q$ condition, $(D)$ and \eqref{q5} to obtain
\begin{align*}
\frac{r^2}{\mu(B(x,r))}\int_{B(x,r)}V\d\mu
&\le r^2\lf[\frac{\mu(B(x,\rho(x)))}{\mu(B(x,r))}\r]^{1/q}\lf(\frac{1}{\mu(B(x,\rho(x)))}\int_{B(x,\rho(x))}V^q\d\mu\r)^{1/q} \\
&\le C\lf(\frac{r}{\rho(x)}\r)^2\lf[\frac{\mu(B(x,\rho(x)))}{\mu(B(x,r))}\r]^{1/q}\frac{\rho^2(x)}{\mu(B(x,\rho(x)))}\int_{B(x,\rho(x))}V\d\mu
 {\le C\lf(\frac{r}{\rho(x)}\r)^{2-Q/q}.}
\end{align*}

When $r\ge \rho(x)$, there exists some integer $j\ge0$ such that $2^j\rho(x)\le r<2^{j+1}\rho(x)$.
It follows from the doubling property of $V\d\mu$,  that
\begin{align*}
\frac{r^2}{\mu(B(x,r))}\int_{B(x,r)}V\d\mu
&\le \frac{(2^{j+1}\rho(x))^2}{\mu(B(x,2^j\rho(x)))}\int_{B(x,2^{j+1}\rho(x))}V\d\mu \\
&\le \frac{2^{2j+2}C_V^{j+1}\rho^2(x)}{\mu(B(x,\rho(x)))}\int_{B(x,\rho(x))}V\d\mu \\
&\le C2^{2j+2}C_V^{j+1}\le  C\lf(\frac{r}{\rho(x)}\r)^{k_1},
\end{align*}
where $k_1={2+\log_2{C_V}}$.
This concludes the proof of Lemma \ref{crit-func-pro-2}.
\end{proof}
\begin{remark}\rm
Note that on a connected metric measure space, a doubling measure also satisfies a reverse doubling property (cf. \cite[Remark 7.1.18]{hkst}), i.e., there exists $0<n<Q$ such that
$${ c2^n}\mu(B(x,r))\le \mu(B(x,2r)).$$
By the reverse doubling property of the measure $\mu$, we can actually obtain better estimate in the previous lemma for $r\ge \rho(x)$.
Indeed, one can take $k_1=2+\log_2C_V-n$. However, this
will not improve our main result. To keep the notions simple, we will not take the reverse doubling property into account.
\end{remark}
\begin{lemma}\label{crit-func-pro-3}
Let $V\in RH_q(X)\cap A_\infty(X)$, $q>\max\{Q/2,1\}$.
Then there exists a constant $C>0$ such that, for any $x\in X$ and $t>0$,
\begin{eqnarray*}
\int_X\frac{1}{\mu(B(x,\sqrt{t}))}\exp\lf\{-\frac{d(x,y)^2}{ct}\r\}V(y)\d\mu(y)\le
\lf\{\begin{array}{ll}
 \displaystyle\frac{C}{t}\lf(\frac{\sqrt{t}}{\rho(x)}\r)^{2-Q/q},{\quad} &\sqrt{t}<\rho(x);\\
\,\\
\displaystyle \frac{C}{t}\lf(\frac{\sqrt{t}}{\rho(x)}\r)^{k_1},&\sqrt{t}\ge\rho(x),
\end{array}\r.
\end{eqnarray*}
where $k_1=2+\log_2{C_V}$.
\end{lemma}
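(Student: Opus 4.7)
The plan is to exploit the Gaussian decay via an annular decomposition and reduce the integral to $V$-averages over balls, then apply Lemma~\ref{crit-func-pro-2}. I would write $X=\bigcup_{j\ge 0}A_j$ where $A_0=B(x,\sqrt t)$ and $A_j=B(x,2^j\sqrt t)\setminus B(x,2^{j-1}\sqrt t)$ for $j\ge 1$. On $A_j$ the kernel factor is bounded by $e^{-4^{j-1}/c}$ (trivially by $1$ for $j=0$), and the doubling condition $(D)$ yields $\mu(B(x,2^j\sqrt t))\le C\,2^{jQ}\mu(B(x,\sqrt t))$. Hence each annular piece is dominated by
\[
C\,e^{-4^{j-1}/c}\,2^{jQ}\cdot\fint_{B(x,2^j\sqrt t)} V\,\d\mu.
\]

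Next I would apply Lemma~\ref{crit-func-pro-2} to the ball $B(x,2^j\sqrt t)$: if $2^j\sqrt t<\rho(x)$, the average is bounded by $C(4^j t)^{-1}(2^j\sqrt t/\rho(x))^{2-Q/q}$, while if $2^j\sqrt t\ge\rho(x)$, it is bounded by $C(4^j t)^{-1}(2^j\sqrt t/\rho(x))^{k_1}$. Substituting, the $j$-th term becomes either $t^{-1}(\sqrt t/\rho(x))^{2-Q/q}\,e^{-4^{j-1}/c}\,2^{j(Q-Q/q)}$ or $t^{-1}(\sqrt t/\rho(x))^{k_1}\,e^{-4^{j-1}/c}\,2^{j(Q+k_1-2)}$, both summable in $j$ since the super-exponential Gaussian decay beats any polynomial growth.

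I then separate the two regimes of the conclusion. When $\sqrt t\ge\rho(x)$, every $j\ge 0$ falls in the large-ball regime, so the geometric-times-Gaussian series sums to the claimed bound $Ct^{-1}(\sqrt t/\rho(x))^{k_1}$. When $\sqrt t<\rho(x)$, I split the sum at $j_0:=\min\{j\ge 0:2^j\sqrt t\ge\rho(x)\}$: the head $j<j_0$ yields $Ct^{-1}(\sqrt t/\rho(x))^{2-Q/q}$, and the tail $j\ge j_0$ yields $Ct^{-1}(\sqrt t/\rho(x))^{k_1}$, with the tail dominated by the head since $\sqrt t/\rho(x)<1$ and $k_1\ge 2>2-Q/q>0$ (the last inequality using $q>Q/2$).

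The only point requiring care is checking convergence of both series $\sum_j e^{-4^{j-1}/c}\,2^{j(Q-Q/q)}$ and $\sum_j e^{-4^{j-1}/c}\,2^{j(Q+k_1-2)}$; this is immediate from the super-exponential Gaussian decay, and allows all the constants $c$ appearing along the way to be absorbed into a single final constant (possibly enlarged). No further subtlety arises, so the proof reduces to this annular decomposition plus the two case-split sums.
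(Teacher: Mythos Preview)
Your proof is correct, but it organizes the argument differently from the paper. The paper first replaces the Gaussian by a polynomial weight $(1+d(x,y)/\sqrt t)^{-C_V}$ (with $C_V$ the doubling constant of $V\d\mu$), then uses the doubling of the \emph{measure} $V\d\mu$ directly on the annuli to collapse everything to a single multiple of $\fint_{B(x,\sqrt t)}V\d\mu$, after which Lemma~\ref{crit-func-pro-2} is applied exactly once and no case split in $j$ is needed. Your route instead uses only the doubling of $\mu$, applies Lemma~\ref{crit-func-pro-2} ball-by-ball to each $B(x,2^j\sqrt t)$, and then handles the resulting series via the super-exponential Gaussian decay, at the cost of the extra split at $j_0$ when $\sqrt t<\rho(x)$. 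The paper's version is a bit slicker since it avoids that split; your version is slightly more hands-on but has the minor advantage that it never explicitly invokes the doubling of $V\d\mu$ (only implicitly through Lemma~\ref{crit-func-pro-2}). Both are sound.
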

\begin{proof}
By dividing into annulus and using the fact that $V\d\mu$ is a doubling measure,  we conclude
\begin{align*}
&\int_X\frac{1}{\mu(B(x,\sqrt{t}))}\exp\lf\{-\frac{d(x,y)^2}{ct}\r\}V(y)\d\mu(y) \\
&\ {\le}\frac{C}{\mu(B(x,\sqrt{t}))}\int_X{\lf(1+\frac{d(x,y)}{\sqrt{t}}\r)^{-C_V}}V(y)\d\mu(y) \\
&\ {=}\frac{C}{\mu(B(x,\sqrt{t}))}\lf\{\int_{B(x,\sqrt{t})}+\sum_{j=0}^\fz\int_{2^j\le{\frac{d(y,x)}{\sqrt{t}}}<2^{j+1}}\r\}\cdots\d\mu(y) \\
&\ \le\frac{C}{\mu(B(x,\sqrt{t}))}\lf(\int_{B(x,\sqrt{t})}V\d\mu+\sum_{j=0}^\fz{2^{-jC_V}}\int_{2^{j+1}B(x,\sqrt{t})}V\d\mu\r) \\
&\ \le\frac{C}{\mu(B(x,\sqrt{t}))}\lf[\int_{B(x,\sqrt{t})}V\d\mu+\sum_{j=0}^\fz\lf(\frac{C_V}{2^{C_V}}\r)^j\int_{B(x,\sqrt{t})}V\d\mu\r] \\
&\ \le\frac{C}{\mu(B(x,\sqrt{t}))}\int_{B(x,\sqrt{t})}V\d\mu.
\end{align*}
Then the lemma follows from Lemma \ref{crit-func-pro-2}.
\end{proof}

\subsection{Some regularity results for the Schr\"odinger equation}
\hskip\parindent  In this part, we provide basic analysis tools for the Schr\"odinger operator.
We begin by a property of Muckenhoupt weight.

\begin{lemma}\label{lemma-small-weight}
 If $V\in A_\fz(X)$, then there exist $0 <\gamma,\epsilon< 1$ such that for all balls $B$ in $X$ we have
\begin{align*}
\mu\lf(\lf\{x\in B:\ V(x)< \gz \fint_BV\d\mu\r\}\r)\le\epsilon\mu(B).
\end{align*}
\end{lemma}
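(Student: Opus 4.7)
The plan is to extract the conclusion directly from the $A_p$ condition that comes with the definition of $A_\infty$ in the paper. By Definition 2.1(ii), there exists some $p\in[1,\infty)$ for which the $A_p$ bound holds with a constant $C_p>0$; we treat the cases $p=1$ and $p>1$ separately.

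In the easy case $p=1$, the condition reads $\fint_B V\,d\mu \le C_1 \operatorname{ess\,inf}_B V$. Choosing $\gamma<1/C_1$, the set $E_\gamma:=\{x\in B:V(x)<\gamma\fint_B V\,d\mu\}$ would force $V(x)<\gamma C_1\operatorname{ess\,inf}_B V<\operatorname{ess\,inf}_B V$ on $E_\gamma$, which is impossible up to a $\mu$-null set; so $\mu(E_\gamma)=0$ and any $\epsilon\in(0,1)$ works.

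For $p>1$, fix an arbitrary $\gamma\in(0,1)$ to be chosen, set $V_B:=\fint_B V\,d\mu$, and again consider $E_\gamma:=\{x\in B: V(x)<\gamma V_B\}$. Since $1/(1-p)<0$, on $E_\gamma$ we have $V(x)^{1/(1-p)}>(\gamma V_B)^{1/(1-p)}$. Integrating over $E_\gamma\subset B$ gives
\begin{align*}
\fint_B V^{1/(1-p)}\,d\mu \;\ge\; \frac{\mu(E_\gamma)}{\mu(B)}\,(\gamma V_B)^{1/(1-p)}.
\end{align*}
Raising to the power $p-1>0$ and multiplying by $V_B$, and using $(1/(1-p))(p-1)=-1$, yields
\begin{align*}
V_B\Bigl(\fint_B V^{1/(1-p)}\,d\mu\Bigr)^{p-1} \;\ge\; \gamma^{-1}\Bigl(\frac{\mu(E_\gamma)}{\mu(B)}\Bigr)^{p-1}.
\end{align*}
Combining with the $A_p$ estimate $V_B(\fint_B V^{1/(1-p)}\,d\mu)^{p-1}\le C_p$ gives $\mu(E_\gamma)/\mu(B)\le (C_p\gamma)^{1/(p-1)}$.

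The final step is then just a choice of parameters: pick $\gamma\in(0,1)$ small enough that $\epsilon:=(C_p\gamma)^{1/(p-1)}<1$, which completes the proof. No real obstacle arises here — the only delicate point is keeping track of the direction of the inequality when raising to the negative exponent $1/(1-p)$; everything else is a one-line Chebyshev-type argument combined with the definition of $A_p$.
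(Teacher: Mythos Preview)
Your proof is correct, and it proceeds along a genuinely different line from the paper's argument. You work directly with the $A_p$ condition exactly as stated in Definition~2.1(ii): a Chebyshev-type bound on the sublevel set $E_\gamma$ via the integral of $V^{1/(1-p)}$, then the $A_p$ inequality closes the loop. The paper instead passes (implicitly) to the Hru\v{s}\v{c}ev logarithmic characterization $[V]_{A_\infty}=\sup_B\bigl(\fint_B V\bigr)\exp\bigl(\fint_B\log V^{-1}\bigr)$, normalizes so that $\fint_B\log V^{-1}=0$, and then runs a Chebyshev argument on $\log(1+V^{-1})$ to obtain the fixed value $\epsilon=1/2$ with an explicit $\gamma$ depending on $[V]_{A_\infty}$. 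Your route is more self-contained relative to the paper's own definition (no appeal to the equivalence of $A_\infty$ characterizations is needed), and it in fact yields the slightly sharper quantitative conclusion $\mu(E_\gamma)/\mu(B)\le (C_p\gamma)^{1/(p-1)}\to 0$ as $\gamma\to 0$; the paper's approach, on the other hand, expresses the constants purely in terms of the scale-invariant $A_\infty$ characteristic rather than a particular $A_p$ level.
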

\begin{proof}
Fix a ball $B$ and take $$\lz=\exp\lf\{-\fint_B\log V\d\mu\r\}.$$
Then we have
\begin{align*}
\fint_B\log(\lz V)^{-1}\d\mu=-\fint_B\log(\lz V)\d\mu=-\log\lz-\fint_B\log V\d\mu=0.
\end{align*}
Since multiplication of an $A_\fz$ weight with a positive scalar does not alter its $A_\fz$ characteristic,
we may assume that $\fint_B\log V^{-1}\d\mu=0$.
Note that
\begin{align*}
[V]_{A_\fz}=\sup_B\lf(\fint_BV\d\mu\r)\exp\lf\{\fint_B\log V^{-1}\d\mu\r\},
\end{align*}
which gives $V_B\le [V]_{A_\fz}$.
Therefore by letting $\gz=[V]_{A_\fz}^{-1}(\e^{2[V]_{A_\fz}}-1)^{-1}$ and $\epsilon=1/2$,
 we obtain
\begin{align*}
\mu\lf(\lf\{x\in B:\ V(x)< \gz\fint_BV\d\mu\r\}\r)
&\le \mu\lf(\lf\{x\in B:\ V(x)< \gz [V]_{A_\fz}\r\}\r) \\
&\le \mu\lf(\lf\{x\in B:\ \log(1+V(x)^{-1})> \log(1+(\gz [V]_{A_\fz})^{-1})\r\}\r) \\
&\le \frac{1}{\log(1+(\gz [V]_{A_\fz})^{-1})}\int_B\log\frac{1+V(x)}{V(x)}\d\mu(x) \\
&\le \frac{1}{\log(1+(\gz [V]_{A_\fz})^{-1})}\int_B\log(1+V(x))\d\mu(x) \\
&\le \frac{1}{\log(1+(\gz [V]_{A_\fz})^{-1})}\int_BV(x)\d\mu(x) \\
&\le \frac{[V]_{A_\fz}}{\log(1+(\gz [V]_{A_\fz})^{-1})}\mu(B) \\
&=\frac12 \mu(B),
\end{align*}
which completes the proof.
\end{proof}

\begin{proposition}\label{FH} {\rm{(Fefferman-Phong's inequality)}}
Let $V\in RH_q(X)\cap A_\infty(X)$, $q>\max\{Q/2,1\}$. Suppose that $u$ belongs to $W^{1,2}(X)$.
Then there exists a constant $C>0$ independent of $u$ such that
\begin{align*}
\int_X|u|^2\rho^{-2}\d\mu\le C\lf(\int_X|\nabla_x u|^2\d\mu+\int_X|u|^2V\d\mu\r).
\end{align*}
\end{proposition}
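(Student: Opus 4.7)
The plan is to prove the inequality by localizing on a countable family of balls adapted to the critical function $\rho$, then summing with bounded overlap. The key pointwise tool is that $\rho$ is essentially constant on such balls (by Lemma \ref{crit-func-pro-1}), and the key quantitative tool is the identity $\rho(x)^2 V_{B(x,\rho(x))} \asymp 1$ from \eqref{q5}.

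First, I would produce a Whitney-type covering. Using Lemma \ref{crit-func-pro-1}, there exists $\alpha\in(0,1)$ such that whenever $y\in B(x,\alpha\rho(x))$ one has $\rho(y)\asymp\rho(x)$. A standard Vitali argument applied to the family $\{B(x,\alpha\rho(x)/5):x\in X\}$ yields countably many points $\{x_j\}$ such that the balls $B_j:=B(x_j,\alpha\rho(x_j)/5)$ are pairwise disjoint, while $\widetilde B_j:=B(x_j,\alpha\rho(x_j))$ cover $X$ and, by the doubling property together with Lemma \ref{crit-func-pro-1}, have uniformly bounded overlap. On each $\widetilde B_j$ we also have $\rho(x)\asymp\rho(x_j)$ and, by \eqref{q5} together with the doubling of $V\d\mu$, $V_{\widetilde B_j}\asymp\rho(x_j)^{-2}$.

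Next I would prove the localized inequality on each $\widetilde B_j$:
\begin{align*}
\int_{\widetilde B_j}|u|^2\rho^{-2}\,\d\mu
\le C\int_{\widetilde B_j}|\nabla_x u|^2\,\d\mu+C\int_{\widetilde B_j}|u|^2 V\,\d\mu.
\end{align*}
Writing $u=(u-u_{\widetilde B_j})+u_{\widetilde B_j}$ and using $\rho^{-2}\asymp\rho(x_j)^{-2}$ on $\widetilde B_j$, the Poincaré inequality $(P_2)$ handles the oscillation term:
\begin{align*}
\rho(x_j)^{-2}\int_{\widetilde B_j}|u-u_{\widetilde B_j}|^2\,\d\mu
\le C\int_{\widetilde B_j}|\nabla_x u|^2\,\d\mu.
\end{align*}
For the constant part $|u_{\widetilde B_j}|^2\mu(\widetilde B_j)$, I would apply Lemma \ref{lemma-small-weight} to find $E_j\subset\widetilde B_j$ with $\mu(E_j)\ge\tfrac12\mu(\widetilde B_j)$ on which $V\ge\gamma V_{\widetilde B_j}\asymp\gamma\rho(x_j)^{-2}$. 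Combining $|u_{\widetilde B_j}|^2\mu(\widetilde B_j)\le C|u_{\widetilde B_j}|^2\mu(E_j)\le C\int_{E_j}|u|^2\,\d\mu+C\int_{\widetilde B_j}|u-u_{\widetilde B_j}|^2\,\d\mu$ with the lower bound on $V$ over $E_j$ and $(P_2)$ yields
\begin{align*}
\rho(x_j)^{-2}\int_{\widetilde B_j}|u_{\widetilde B_j}|^2\,\d\mu
\le C\int_{\widetilde B_j}|u|^2 V\,\d\mu+C\int_{\widetilde B_j}|\nabla_x u|^2\,\d\mu.
\end{align*}

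Finally I would sum over $j$. Since the $\widetilde B_j$ cover $X$ and have bounded overlap, summation gives the claimed global inequality. The main technical obstacle is verifying the Whitney covering with bounded overlap in this generality; this relies crucially on the quantitative control of $\rho(y)/\rho(x)$ in Lemma \ref{crit-func-pro-1} to pass between a disjoint sub-collection and a covering by fixed dilations. A subsidiary point is confirming $V_{\widetilde B_j}\asymp\rho(x_j)^{-2}$ when the radius $\alpha\rho(x_j)$ is slightly smaller than $\rho(x_j)$; this follows from doubling of $V\d\mu$ and \eqref{q5}.
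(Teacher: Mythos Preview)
Your proof is correct and rests on the same local ingredients as the paper's: the Poincar\'e inequality, the comparability of $\rho$ on balls of scale $\rho$ (Lemma \ref{crit-func-pro-1}), the relation $V_{B(x,\rho(x))}\asymp\rho(x)^{-2}$ from \eqref{q5}, and the lower bound on $V$ on a large subset coming from Lemma \ref{lemma-small-weight}. The difference lies in how you pass from local to global. You construct a Whitney-type covering $\{\widetilde B_j\}$ adapted to $\rho$ with bounded overlap and sum the localized inequalities. The paper instead avoids any covering: it proves the local estimate at \emph{every} center $x_0$, divides by $\mu(B(x,\rho(x)))$, integrates in $x_0$ over $X$, and applies Fubini; comparability of $\rho$ then makes both sides unfold into the two sides of the global inequality. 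Your discrete approach is the one more commonly seen in the Schr\"odinger-operator literature and is perfectly valid here---the bounded-overlap property does follow from Lemma \ref{crit-func-pro-1} plus doubling, as you indicate. The paper's continuous averaging trades that combinatorial step for a Fubini argument, which is a bit slicker but less constructive. At the local level the algebra also differs slightly (the paper combines the Poincar\'e and potential terms through $\min\{V,\rho(x_0)^{-2}\}$ rather than splitting $u$ into mean plus oscillation), but the two arguments are equivalent.
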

\begin{proof}
Fix $x_0\in X$ and let $B_0=B(x_0,\rho(x_0))$.
There hold by the Poincar\'{e} inequality that
\begin{align*}
\fint_{B_0}\fint_{B_0}|u(x)-u(y)|^2\rho(x_0)^{-2}\d\mu(x)\d\mu(y)\le C\fint_{B_0}|\nabla_x u(x)|^2\d\mu(x)
\end{align*}
and
\begin{align*}
\fint_{B_0}\fint_{B_0}|u(y)|^2V(y)\d\mu(x)\d\mu(y)\le \fint_{B_0}|u(x)|^2V(x)\d\mu(x).
\end{align*}

Adding two inequalities above we arrive at
\begin{align*}
\fint_{B_0}|u(x)|^2\lf(\fint_{B_0}\min\lf\{V(y),\rho(x_0)^{-2}\r\}\d\mu(y)\r)\d\mu(x)\le C\lf(\fint_{B_0}|\nabla_x u(x)|^2\d\mu(x)+\fint_{B_0}|u(x)|^2V(x)\d\mu(x)\r).
\end{align*}

Since $V$ is an $A_\fz$ weight, Lemma \ref{lemma-small-weight}  shows there exist constants $0<\epsilon,\gamma<1$ such that, for any ball $B\subset X$,
\begin{align*}
\mu\lf(\lf\{y\in B:\ V(y)<\gamma \fint_BV\d\mu\r\}\r)\le \epsilon\mu(B).
\end{align*}
From this, \eqref{q5} and Lemma \ref{crit-func-pro-1}, it follows that
\begin{align*}
\fint_{B_0}\min\lf\{V(y),\rho(x_0)^{-2}\r\}\d\mu(y)\ge (1-\epsilon)\min\lf\{\gamma \fint_{B_0}V\d\mu,\rho(x_0)^{-2}\r\}\ge c\rho(x_0)^{-2}\ge c\rho(x)^{-2},
\end{align*}
which implies that
\begin{align*}
 \fint_{B_0}{|u(x)|^2}{\rho(x)}^{-2}\d\mu(x)\le C\lf(\fint_{B_0}|\nabla_x u(x)|^2\d\mu(x)+\fint_{B_0}|u(x)|^2V(x)\d\mu(x)\r).
\end{align*}
Therefore there holds by Lemma \ref{crit-func-pro-1} and $(D)$ that
\begin{align}\label{q19}
\int_X{|u(x)|^2}{\rho(x)^{-2}}\frac{\mathbbm{1}_{B_0}(x)}{\mu(B(x,\rho(x)))}\d\mu(x)
&\le C\int_X|\nabla_x u(x)|^2\frac{\mathbbm{1}_{B_0}(x)}{\mu(B(x,\rho(x)))}\d\mu(x) \\ \nonumber
&\ +C\int_X|u(x)|^2V(x)\frac{\mathbbm{1}_{B_0}(x)}{\mu(B(x,\rho(x)))}\d\mu(x).
\end{align}

Integrating the LHS of \eqref{q19} in $x_0$ over $M$, changing the order of integration,
applying Lemma \ref{crit-func-pro-1} and $(D)$ again, we infer that
\begin{align*}
\int_X\int_X{|u(x)|^2}{\rho(x)^{-2}}\frac{\mathbbm{1}_{B_0}(x)}{\mu(B(x,\rho(x)))}\d\mu(x)\d\mu(x_0)
&\ge \int_X\int_X{|u(x)|^2}{\rho(x)^{-2}}\frac{\mathbbm{1}_{B(x,c\rho(x))}(x_0)}{\mu(B(x,\rho(x)))}\d\mu(x_0)\d\mu(x) \\
&= \int_X{|u(x)|^2}{\rho(x)^{-2}}\frac{\mu({B(x,c\rho(x)))}}{\mu(B(x,\rho(x)))}\d\mu(x)\\
&\ge c\int_X{|u(x)|^2}{\rho(x)^{-2}}\d\mu(x).
\end{align*}
Analogously, the RHS is bounded by a harmless positive constant times
\begin{align*}
\int_X|\nabla_x u(x)|^2\d\mu(x)+\int_X|u(x)|^2V(x)\d\mu(x).
\end{align*}
This completes the proof of Proposition \ref{FH}.
\end{proof}

Recall that $\LV=\L+V$, where $\L$ is the operator generalized by the Dirichlet form $\E$.
We shall need the following  two Caccioppoli's inequalities, whose proofs are standard; see   \cite[Lemma 3]{Ku2000}
or \cite[Lemma 7.5]{BDL18} for example.
\begin{lemma}\label{Cacc}{\rm{(Caccioppoli's inequality)}} Let $V\in RH_q(X)\cap A_\infty(X)$, $q>\max\{Q/2,1\}$.
{ Let $u(x)$ be} a weak solution to $\LV u=\L u+Vu = 0$ in some ball $\lz B$ with $\lz>1$.
Then there exists a constant $C>0$ independent of $u$ and $\lz B$ such that
\begin{align*}
\int_{B}|\nabla_x u|^2\d\mu+\int_{B}|u|^2V\d\mu\le \frac{C}{(\lz r_B-r_B)^2}\int_{\lz B}|u|^2\d\mu.
\end{align*}
\end{lemma}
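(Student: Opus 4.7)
The plan is to run the classical Caccioppoli argument with a Lipschitz cutoff, adapted to the Dirichlet form setting.

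First I would choose a Lipschitz cutoff $\eta$ on $X$ with $\eta\equiv 1$ on $B$, $\supp\eta\subset \lambda B$, $0\le\eta\le 1$, and $|\nabla_x\eta|\le C/(\lambda r_B-r_B)$ almost everywhere. (Such a cutoff exists in this setting because $d$ is the intrinsic distance and truncations of distance functions are Lipschitz with unit gradient bound; rescaling gives the stated bound.) Since $u\in W^{1,2}(\lambda B)$ and $\eta$ is bounded Lipschitz with compact support, $\phi:=\eta^2 u$ is an admissible test function in the weak formulation of $\mathscr{L}u=0$, so
$$\int_X \langle \nabla_x u,\nabla_x(\eta^2 u)\rangle\, d\mu + \int_X V\,\eta^2 u^2\, d\mu = 0.$$

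Next I would use the Leibniz rule for the carré du champ, $\nabla_x(\eta^2 u)=\eta^2\nabla_x u+2\eta u\nabla_x \eta$, to expand the first integrand as $\eta^2|\nabla_x u|^2+2\eta u\,\langle\nabla_x u,\nabla_x\eta\rangle$. Moving the cross term to the right-hand side and applying Cauchy--Schwarz followed by Young's inequality with a small parameter $\varepsilon=1/2$ gives
$$\left|2\int_X \eta u\,\langle\nabla_x u,\nabla_x\eta\rangle\, d\mu\right|\le \tfrac12\int_X \eta^2|\nabla_x u|^2\, d\mu + 2\int_X u^2|\nabla_x\eta|^2\, d\mu.$$
Absorbing the first term into the left, we obtain
$$\tfrac12\int_X \eta^2|\nabla_x u|^2\, d\mu+\int_X V\eta^2 u^2\, d\mu \le 2\int_X u^2|\nabla_x\eta|^2\, d\mu \le \frac{C}{(\lambda r_B-r_B)^2}\int_{\lambda B} u^2\, d\mu.$$
Restricting the left-hand side to $B$, where $\eta\equiv 1$, yields the desired bound.

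The only non-routine point is justifying that $\eta^2 u$ is a legitimate test function and that the Leibniz rule applies at the level of the energy density $\langle\nabla_x\cdot,\nabla_x\cdot\rangle$; this follows from the standing assumption that $\mathscr{E}$ is strongly local and admits a carré du champ, together with standard density/approximation arguments for $W^{1,2}_0(\lambda B)$. The potential term is handled for free since $V\ge 0$ makes the added integral $\int V\eta^2 u^2\, d\mu$ nonnegative and it survives on the left-hand side with the correct sign, giving the full statement of the lemma in one stroke.
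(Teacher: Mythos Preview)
Your proof is correct and is exactly the standard Caccioppoli cutoff argument. The paper does not supply its own proof of this lemma; it simply declares the result standard and refers to \cite{Ku2000} and \cite{BDL18}, whose arguments are precisely the one you wrote down.
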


In what follows, we denote by $Q_0$, a parabolic cube as $Q_0=B(x_0,r)\times (t_0-r^2, t_0+r^2)$. Moreover, given a constant $\lambda>0$ we use $\lambda Q_0$ to denote
$B(x_0,\lz r)\times (t_0-\lz^2 r^2, t_0+\lz^2 r^2)$.

\begin{lemma}\label{Cacc1}{\rm{(parabolic Caccioppoli's inequality)}}
Let $V\in RH_q(X)\cap A_\infty(X)$, $q>\max\{Q/2,1\}$. Let $u(x,t)$ be a weak solution to $\partial_tu+\LV u = 0$ in some  parabolic cube  $\lz Q_0$ with $\lz>1$.
Then there exists a constant $C>0$ independent of $u$ and $\lz Q_0$ such that
\begin{align*}
\sup_{t_0-r^2<s<t_0+r^2}\int_{B(x_0,r)}|u(x,s)|^2\d\mu(x)+\int_{Q_0}(|\nabla_x u|^2+|u|^2V)\d\mu\d t\le \frac{C}{(\lz r-r)^2}\int_{\lz Q_0}|u|^2\d\mu\d t.
\end{align*}
\end{lemma}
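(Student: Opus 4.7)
The strategy is the standard parabolic Caccioppoli argument: test the equation against $u\eta^2$ for an appropriate cutoff. I would take $\eta(x,t) = \eta_1(x)\eta_2(t)$ with $\eta_1$ Lipschitz, equal to $1$ on $B(x_0,r)$, supported in $B(x_0,\lambda r)$, satisfying $|\nabla_x \eta_1| \le C/(\lambda r - r)$; and $\eta_2$ Lipschitz in time, equal to $1$ on $[t_0-r^2, t_0+r^2]$, supported in $(t_0-\lambda^2 r^2, t_0+\lambda^2 r^2)$, with $|\eta_2'| \le C/((\lambda^2-1)r^2) \le C/(\lambda r - r)^2$ (the last inequality uses $\lambda>1$, since $\lambda^2-1 \ge (\lambda-1)^2$).

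For an arbitrary $s \in (t_0 - r^2, t_0 + r^2)$, I would test the weak equation $\partial_t u + \mathscr{L} u = 0$ against $\phi = u\eta^2 \mathbbm{1}_{(t_0 - \lambda^2 r^2, s)}(t)$. Writing $2u\,\partial_t u = \partial_t(u^2)$ and integrating by parts in $t$, using that $\eta$ vanishes at $t = t_0 - \lambda^2 r^2$, the time term becomes $\tfrac{1}{2}\int_X u^2(\cdot,s)\eta^2(\cdot,s)\,d\mu - \tfrac{1}{2}\iint u^2\,\partial_t(\eta^2)\,d\mu\,dt$. The spatial gradient term expands as $\iint \eta^2 |\nabla_x u|^2 + 2 \iint \eta u \langle \nabla_x u, \nabla_x \eta\rangle$, and Young's inequality absorbs half of $\iint \eta^2 |\nabla_x u|^2$ into the LHS at the cost of adding $C\iint u^2 |\nabla_x \eta|^2$ to the RHS. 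The potential term $\iint V u^2 \eta^2$ stays on the LHS, contributing a nonnegative quantity since $V \ge 0$. Combining and applying the cutoff bounds yields, for every $s$,
\begin{align*}
\int_X u^2(x,s)\eta^2(x,s)\,d\mu(x) + \iint_{\{t<s\}}\eta^2\bigl(|\nabla_x u|^2 + V u^2\bigr)\,d\mu\,dt \le \frac{C}{(\lambda r - r)^2}\iint_{\lambda Q_0} |u|^2\,d\mu\,dt.
\end{align*}
Since $\eta \equiv 1$ on $Q_0$, taking the supremum over $s$ in the first term, and letting $s \to t_0 + r^2$ in the second, then adding the two resulting inequalities, delivers the claim up to a harmless factor of $2$.

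The main technical obstacle is the rigorous justification of the time integration by parts: a weak solution has $\partial_t u$ only as a distribution in $t$, and $\mathbbm{1}_{(t_0 - \lambda^2 r^2, s)}(t)$ is not an admissible test function. This is handled by the usual Steklov-average approximation of $u$ in time, deriving the inequality for the smooth approximants, and then passing to the limit (together with a Lebesgue-differentiation argument to recover the pointwise-in-$s$ slice $\int_X u^2(\cdot,s)\eta^2(\cdot,s)\,d\mu$ for a.e.\ $s$, after which the sup over $s$ is standard). This is the only genuinely parabolic step; once it is in place, the spatial portion of the argument is essentially identical to the elliptic Caccioppoli estimate of Lemma~\ref{Cacc}, and the nonnegativity of $V$ plays no role beyond keeping $V u^2$ on the favourable side of the inequality.
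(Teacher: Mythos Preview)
Your proposal is correct and is precisely the standard argument the paper has in mind: the authors do not give their own proof but simply remark that the result is ``standard'' and refer to \cite[Lemma 3]{Ku2000} and \cite[Lemma 7.5]{BDL18}, where exactly this test-function/Steklov-average computation is carried out. There is nothing to add or compare.
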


\begin{proposition}\label{pointwise-bound}
Assume that the Dirichlet metric measure space $(X,d,\mu,\E)$ satisfies $(D)$ and
$(P_2)$.
Let $g\in L^q(B)$, where $q>\max\{\frac{2Q}{Q+2},1\}$ and $B=B(x,r)$.
There exists $v\in W^{1,2}_0(B)$ such that $\L v=g$.
Moreover, if $q<Q/2$, it holds for $q^\ast=\frac{Qq}{Q-2q}$,
$$\left(\fint_{\frac 12B}|v|^{q^\ast}\d\mu\right)^{1/q^\ast}\le Cr^2\left(\fint_{B}|g|^q\d\mu\right)^{1/q};$$
if $q=Q/2$ then it holds for any $p<\infty$ that
$$\left(\fint_{\frac 12B}|v|^{p}\d\mu\right)^{1/p}\le Cr^2\left(\fint_{B}|g|^q\d\mu\right)^{1/q};$$
if $q>Q/2$, it holds
$$\|v\|_{L^\infty(\frac 12B)}\le C  r^2\left(\fint_{B}|g|^q\d\mu\right)^{1/q}.$$
\end{proposition}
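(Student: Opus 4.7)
The plan is to combine Lax--Milgram existence with Moser iteration, both available in the Dirichlet-metric setting $(X,d,\mu,\E)$ satisfying $(D)$ and $(P_2)$.

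For existence, functions in $W^{1,2}_0(B)$ satisfy a zero-boundary Poincar\'e inequality $\|u\|_{L^2(B)} \le Cr\|\nabla_x u\|_{L^2(B)}$, derivable from $(P_2)$ via extension by zero on a slightly larger concentric ball. Hence the form $\E$ is coercive on $W^{1,2}_0(B)$. The conjunction of $(D)$ and $(P_2)$ also yields a Sobolev--Poincar\'e embedding
\begin{equation*}
\lf(\fint_B |u|^{2^\ast}\d\mu\r)^{1/2^\ast} \le C r \lf(\fint_B |\nabla_x u|^2\d\mu\r)^{1/2}, \qquad u \in W^{1,2}_0(B),
\end{equation*}
where $2^\ast = 2Q/(Q-2)$ if $Q > 2$, and $2^\ast$ may be taken to be any finite exponent if $Q \le 2$; see \cite{BM1995,St1996}. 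The hypothesis $q > \max\{2Q/(Q+2),1\}$ is exactly the one making $g \in L^q(B)$ define a bounded linear functional $\phi \mapsto \int g\phi\,\d\mu$ on $W^{1,2}_0(B)$ via H\"older's inequality. Lax--Milgram then produces a unique $v \in W^{1,2}_0(B)$ satisfying $\E(v,\phi) = \int g \phi \,\d\mu$ for every test $\phi \in W^{1,2}_0(B)$, which is exactly $\L v = g$ in the weak sense.

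I would next run Moser iteration on concentric balls $B_k := \tfrac{1}{2}(1 + 2^{-k})B$. For $\beta \ge 0$, testing the equation with $\phi = |v|^{2\beta} v \eta_k^2$, where $\eta_k$ is a Lipschitz cutoff equal to $1$ on $B_{k+1}$, supported in $B_k$, and satisfying $|\nabla_x \eta_k| \lesssim 2^k/r$, produces a Caccioppoli-type estimate
\begin{equation*}
\int_{B_{k+1}} \lf|\nabla_x\lf(|v|^{\beta+1}\r)\r|^2 \d\mu \lesssim (\beta+1)^2\lf[\frac{4^k}{r^2}\int_{B_k} |v|^{2(\beta+1)} \d\mu + \int_{B_k} |g|\, |v|^{2\beta+1} \d\mu\r].
\end{equation*}
Combining this with the Sobolev embedding applied to $|v|^{\beta+1}\eta_k$ and H\"older's inequality on the $g$-term with exponent $q$ yields a reverse H\"older chain with improvement factor $\chi = 2^\ast/2 > 1$ (or any chosen $\chi > 1$ when $Q \le 2$). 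Iterating the chain along an appropriate sequence of exponents delivers each of the three conclusions: the $L^\infty$ bound when $q > Q/2$, an $L^p$ bound for any prescribed $p < \infty$ when $q = Q/2$, and the balanced exponent $q^\ast = Qq/(Q-2q)$ when $q < Q/2$. The base step is supplied by the energy estimate from existence.

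The main technical obstacle is the chain rule for $|v|^\beta v$ in the abstract Dirichlet form framework; non-Lipschitz manipulations of this kind must be justified via truncation at height $M$ and passage to the limit $M \to \infty$, together with the construction of sufficiently many Lipschitz cutoffs. This calculus, along with the requisite Sobolev--Poincar\'e inequalities, has been developed in the works of Biroli--Mosco \cite{BM1995} and Sturm \cite{St1995,St1996}, so the cleanest route is to quote those references rather than reprove them from scratch. Once the calculus is in place, the remaining Moser iteration is entirely standard.
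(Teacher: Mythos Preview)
Your approach is correct but genuinely different from the paper's. The paper does not run a Moser iteration at all: after quoting existence and the energy estimate from \cite[Lemma 2.6]{CJKS2020}, it invokes \cite[Proposition 3.1]{CJKS2020}, which gives (for bounded $g$) a pointwise bound of the form
\[
|v(x)| \le C\Big(\fint_B |v|\,\d\mu + G(x)\Big), \qquad G(x):=\sum_{j\le [\log_2 r]} 2^{2j}\Big(\fint_{B(x,2^j)}|g|^q\,\d\mu\Big)^{1/q},
\]
so that the whole problem is reduced to the mapping properties of the fractional-type potential $G$, which are read off from Haj\l asz--Koskela \cite[Theorem 5.3]{hak2000}; a truncation $g_k=(-k)\vee(g\wedge k)$ then removes the boundedness hypothesis on $g$. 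This route is quicker on the page because it outsources all the work to two external results, and it yields the three regimes $q\lessgtr Q/2$ in one stroke via the known Sobolev-type mapping of $G$. Your Moser scheme, by contrast, is more self-contained and stays entirely within energy methods; its cost is the careful bookkeeping of the reverse-H\"older chain in the subcritical case $q<Q/2$ (where the iteration must stop exactly at $q^\ast$ rather than running to $\infty$) and the chain-rule justifications you already flagged. Both arguments are standard in the Euclidean setting; in this abstract Dirichlet framework the paper's shortcut is natural because the pointwise potential bound \cite[Proposition 3.1]{CJKS2020} was proved by the same authors and is tailored to exactly this situation.
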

\begin{proof}
The existence of $v$ follows from \cite[Lemma 2.6]{CJKS2020}, which also implies
\begin{eqnarray}\label{existence-solution}
\fint_{B}|v|{\d}\mu\le Cr \lf(\fint_{B}{|\nabla_x v|^2}{\d}\mu\r)^{1/2}\le Cr^2 \lf(\fint_{B}|g|^{q}{\d}\mu\r)^{1/q}.
\end{eqnarray}
Note that \cite[Lemma 2.6]{CJKS2020} was stated for $Q\ge 2$ only. But when $Q<2$,
$\mu$ satisfies $Q'$-doubling with $Q'=2$, and $q>1=\frac{2Q'}{Q'+2}$, so \cite[Lemma 2.6]{CJKS2020}
applies in this case.

If $g\in L^\infty(B)$, \cite[Proposition 3.1]{CJKS2020} then implies for almost every $x\in \frac 12B$ that
$$|v(x)|\le C\lf\{\fint_{B}|v|{\d}\mu+G(x)\r\}\le Cr^2 \lf(\fint_{B}|g|^{q}{\d}\mu\r)^{1/q} +CG(x),$$
{where $G$} is given as
\begin{equation*}
G(x):=\sum_{j\le [\log_2r]}
2^{2j}\lf(\fint_{B(x,2^j)}|g|^{q}\d\mu\r)^{1/q}.
\end{equation*}
In the above summation, $[\log_2r]$ denotes the biggest integer not bigger
than $\log_2r$, and we set $g|_{B(x,2^j)\setminus B}=0$ if $B(x,2^j)\setminus B\neq \emptyset$.

By the mapping property of the potential $G$ from \cite[Theorem 5.3]{hak2000},
we deduce that  if $\frac{2Q}{Q+2}<q<Q/2$, it holds for $q^\ast=\frac{Qq}{Q-2q}$,
$$\left(\fint_{\frac 12B}|v|^{q^\ast}\d\mu\right)^{1/q^\ast}\le Cr^2 \lf(\fint_{B}|g|^{q}{\d}\mu\r)^{1/q} +C\|G\|_{L^{q^\ast}(\frac 12B)}\le Cr^2\left(\fint_{B}|g|^q\d\mu\right)^{1/q}.$$
{If $q=Q/2$, it holds for any $p<\infty$ that }
$$\left(\fint_{\frac 12B}|v|^{p}\d\mu\right)^{1/p}\le Cr^2\left(\fint_{B}|g|^q\d\mu\right)^{1/q}.$$
If $q>Q/2$, it holds
$$\|v\|_{L^\infty(\frac 12B)}\le Cr^2 \lf(\fint_{B}|g|^{q}{\d}\mu\r)^{1/q} +C\|G\|_{L^{\infty}(\frac 12B)}\le C  r^2\left(\fint_{B}|g|^q\d\mu\right)^{1/q}.$$

Since the Laplace equation is a linear equation, by using a limit progress, i.e., choosing $g_k=(-k)\vee\{g\wedge k\}$, we see that the above three estimates hold for general $L^q$ functions $g$.
\end{proof}

\begin{proposition}\label{infty-schrodinger}
Assume that the Dirichlet metric measure space $(X,d,\mu,\E)$ satisfies $(D)$ and
$(P_2)$.
 Suppose that $\LV u=\mathcal{L}u+Vu=0$ in a bounded domain $\Omega\subset X$, where $V\in RH_{q}(X)\cap A_\infty(X)$, $q>\max\{Q/2,1\}$. Then  $u$ is locally bounded in $\Omega$.
\end{proposition}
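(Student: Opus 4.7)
The plan is to split $u$ into an $\L$-harmonic piece and a piece solving an inhomogeneous equation of the form $\L v = g$, and then iterate. Fix a ball $B = B(x_0, r)$ with $2B \Subset \Omega$; it suffices to prove $u \in L^\infty(B)$. Writing $\L u = -Vu$ on $2B$, I would decompose $u = v + w$ on $2B$, where $v \in W^{1,2}_0(2B)$ is the weak solution of $\L v = -Vu$ furnished by Proposition \ref{pointwise-bound}, and $w := u - v$ satisfies $\L w = 0$ weakly in $2B$.

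For the $\L$-harmonic part $w$, the classical Moser iteration is available in our setting: the Caccioppoli inequality (the $V \equiv 0$ case of Lemma \ref{Cacc}) together with the Sobolev--Poincar\'e embedding that $(D)$ and $(P_2)$ provide yield $\|w\|_{L^\infty(B)} \le C \|w\|_{L^2(2B)}$. (Equivalently, one may read off the $g = 0$ specialization of the pointwise bound used inside Proposition \ref{pointwise-bound}.) Consequently $u \in L^\infty(B)$ will follow from $v \in L^\infty(B)$, and more generally $u \in L^p_{\loc}$ whenever $v \in L^p_{\loc}$ for $p \ge 2$.

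The heart of the proof is an integrability bootstrap for $v$ via Proposition \ref{pointwise-bound}. By the self-improvement of $RH_q$ noted in Section 2, one has $V \in L^{q_0}_{\loc}$ for some $q_0 > \max\{Q/2,\,1\}$. If $u \in L^p_{\loc}$ with $p$ large enough that $\frac{1}{p}+\frac{1}{q_0} < \frac{2}{Q}$, H\"older's inequality places $Vu \in L^s_{\loc}$ with some $s > Q/2$, and the supercritical branch of Proposition \ref{pointwise-bound} at once delivers $v \in L^\infty_{\loc}$. Otherwise, starting from the baseline $u \in W^{1,2}_{\loc} \subset L^2_{\loc}$, the subcritical branch of Proposition \ref{pointwise-bound} places $v$ in $L^{s^\ast}_{\loc}$ with $s^\ast = Qs/(Q-2s)$; using $q_0 > Q/2$ one checks that this strictly improves the exponent by a quantitative factor, so $u = v + w$ lies in a strictly larger $L^{s^\ast}_{\loc}$ class.

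The main obstacle I expect is this bootstrap: one must verify the exponent gain per step is bounded below so that the iteration terminates in finitely many steps, and handle the borderline case $s = Q/2$ (in which Proposition \ref{pointwise-bound} gives $L^p_{\loc}$ for every finite $p$ rather than $L^\infty_{\loc}$) by performing one further step to land in the supercritical regime. The remaining technicality, the case $Q \le 2$, can be absorbed exactly as in the proof of Proposition \ref{pointwise-bound}, by treating $\mu$ as $Q'$-doubling with $Q' = 2$ and using $q > 1$.
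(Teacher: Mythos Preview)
Your approach is essentially the paper's: decompose $u$ on $2B$ into an $\L$-harmonic piece and a solution of $\L v=Vu$ via Proposition~\ref{pointwise-bound}, then bootstrap integrability of $u$ until the supercritical case of Proposition~\ref{pointwise-bound} yields $L^\infty$. The case distinctions (subcritical, borderline $s=Q/2$, and $Q\le 2$) are handled identically.

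There is, however, one small gap in your startup. Taking the baseline as $u\in L^2_{\loc}$ and $V\in L^{q_0}_{\loc}$ gives $Vu\in L^s_{\loc}$ with $\tfrac{1}{s}=\tfrac{1}{2}+\tfrac{1}{q_0}$; for Proposition~\ref{pointwise-bound} you need $s>\tfrac{2Q}{Q+2}$, i.e.\ $q_0>Q$, which is \emph{not} guaranteed by $q_0>Q/2$. The paper fixes this by first invoking the Sobolev--Poincar\'e embedding (available from $(D)$ and $(P_2)$) to get $u\in L^{2Q/(Q-2)}_{\loc}$; then $\tfrac{1}{s}=\tfrac{1}{2}-\tfrac{1}{Q}+\tfrac{1}{q_0}<\tfrac{1}{2}+\tfrac{1}{Q}$ holds precisely because $q_0>Q/2$, and the iteration can start. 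Since you already have $u\in W^{1,2}_{\loc}$ and explicitly use Sobolev--Poincar\'e for the Moser argument on $w$, this is a one-line repair, after which your proof coincides with the paper's.
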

\begin{proof} Let $B=B(x_B,r_B)$ with $2B\subset \Omega$.
Suppose first $Q>2$.
Since $V\in RH_{q}$, $q>\max\{1,Q/2\} $,
and $u\in L^{\frac{2Q}{Q-2}}(2B)$ as a consequence of the Sobolev-Poincar\'e inequality (cf. \cite{hak2000}), by the H\"older inequality, we can conclude that
\begin{eqnarray}\label{regularity-improvement}
\int_{2B}|Vu|^p\d\mu\le \left(\int_{2B}|V|^{q}\d\mu\right)^{p/q}\left(\int_{2B}|u|^{\frac{pq}{q-p}}\d\mu\right)^{(q-p)/q},
\end{eqnarray}
where we choose $p<q$ such that $\frac{pq}{q-p}=\frac{2Q}{Q-2}$.
It then holds
$$\frac 1p =\frac 12-\frac 1Q +\frac 1q<\frac{1}{2}+\frac 1Q,$$
which implies $\frac{2Q}{Q+2}<p<q$.

Proposition \ref{pointwise-bound} implies that there exists
 a solution $v\in W^{1,2}_0(2B)$ such that $\L v=Vu$.
 As $\L v=Vu$ and $\LV u=0$ in $2B$, $u+v$ is $\L$-harmonic in $2B$ and it holds
\begin{equation}\label{MVP-harmonic}
\|u+v\|_{L^\infty(B)}\le C \fint_{2B}|u+v|\d\mu\le C \fint_{2B}|u|\d\mu+Cr_B^2\left(\fint_{2B}|Vu|^p\d\mu\right)^{1/p},
\end{equation}
where the last inequality follows from \eqref{existence-solution}.

If $p>Q/2$,  Proposition \ref{pointwise-bound}  implies
 $$\|v\|_{L^\infty(B)}\le C  r_B^2\left(\fint_{2B}|Vu|^p\d\mu\right)^{1/p},$$
 which together with \eqref{MVP-harmonic} yields
  $$\|u \|_{L^\infty(B)}\le C \fint_{2B}|u+v|\d\mu+\|v\|_{L^\infty(B)}\le C \fint_{2B}|u|\d\mu+C  r_B^2\left(\fint_{2B}|Vu|^p\d\mu\right)^{1/p}.$$
 This means $u$ is locally bounded in $\Omega$.

 If $p=Q/2$, then Proposition \ref{pointwise-bound} together with \eqref{MVP-harmonic} shows that
 for any $p'<\infty$, it holds
  $$\left(\fint_{B}|u|^{p'}\d\mu\right)^{1/p'}\le \|u+v\|_{L^\fz(B)}+\left(\fint_{B}|v|^{p'}\d\mu\right)^{1/p'}\le C \fint_{2B}|u|\d\mu+C  r_B^2\left(\fint_{2B}|Vu|^p\d\mu\right)^{1/p}.$$
As $q>Q/2$, we let $p'$ be large enough such that $\frac 1q+\frac 1{p'}<\frac 2Q$, and $p_1'$ be as
$\frac 1{p_1'} =\frac 1q+\frac 1{p'}.$
By the above inequality and an argument similar to \eqref{regularity-improvement}, we have $Vu\in L^{p_1'}(B)$, where $p_1'>Q/2$. The argument for the case $p>Q/2$ then applies and yields that
$u\in L^\infty(\frac 12B)$.

If $p<Q/2$, then Proposition \ref{pointwise-bound} together with \eqref{MVP-harmonic}
implies that for $p^\ast=\frac{Qp}{Q-2p}$,
  $$\left(\fint_{B}|u|^{p^\ast}\d\mu\right)^{1/p^\ast}\le C \fint_{2B}|u+v|\d\mu+\|v\|_{L^{p^\ast}(B)}\le C \fint_{2B}|u|\d\mu+C  r_B^2\left(\fint_{2B}|Vu|^p\d\mu\right)^{1/p}.$$
We let $p_1$ be such that $1/p_1=1/q+1/p^\ast$. Then it holds
$$\frac{1}{p_1}=\frac 1q+\frac1{p^\ast}=\frac 1q+\frac 1p-\frac 2Q<\frac{1}{p}. $$
By the above inequality and an argument similar to \eqref{regularity-improvement}, we have $Vu\in L^{p_1}(B)$. By using Proposition \ref{pointwise-bound} again, we first find a solution
 $v_1\in W^{1,2}_0(B)$ such that $\L v_1=Vu$ in $B$, and then repeat the above argument
 to deduce that $u\in L^\infty(\frac 12 B)$ if $p_1>Q/2$,
 $u\in L^{p_1^\ast}(\frac 12 B)$ if $p_1<Q/2$, and $u\in L^{p'}(\frac 12 B)$ for any $p'<\infty$ if $p_1=Q/2$.
 Let $k\in\cn$ be such that
 $$\frac 1{p_k}=\frac 1q+\frac 1{p_{k-1}^\ast} =\frac 1q+\frac 1{p_{k-1}}-\frac 2Q=\frac kq+\frac 1{p}-\frac {2k}Q<\frac 2Q. $$
 Repeating the above arguments at most $k$ times, we see that
 $u\in L^\infty(2^{-k}B)$. This implies that $u$ is locally bounded in $\Omega$.

 If $Q\le 2$, then $q>1$ and we may choose any $Q'>2$ such that $q>Q'/2$. Note that $\mu$ satisfies the doubling condition with $Q'$
 also. The above approach applies and shows that $u$ is locally bounded. The proof is complete.
 \end{proof}

 \begin{proposition}\label{MVP-Schrodinger}
 Assume that the Dirichlet metric measure space $(X,d,\mu,\E)$ satisfies $(D)$ and
$(P_2)$.
 Suppose that $\LV u=\mathcal{L}u+Vu=0$ in a bounded domain $\Omega\subset X$, where  $V\in RH_q(X)\cap A_\infty(X)$, $q>\max\{Q/2,1\}$.
 Then there exists $C>0$ such that for any ball $B=B(x_B,r_B)$ with $2B\subset \Omega$,
 $$\|u\|_{L^\infty(B)}\le C\fint_{2B}|u|\d\mu.$$
 Moreover, $u$ is locally H\"older continuous in $\Omega$, and there exists $\theta\in (0,\min\{1,2-Q/q\})$ such that for any $x,y\in \frac12 B$,
 $$|u(x)-u(y)|\le C{\left(\frac{d(x,y)}{r_B}\right)^\theta} \|u\|_{L^\infty(B)}\left(1+r_B^2\fint_B V\d\mu\right).$$
 \end{proposition}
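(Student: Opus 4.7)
The plan is to establish the two claims separately: first the $L^{\infty}$ mean value bound, then H\"older continuity via an oscillation-decay iteration.

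For the mean value bound, I would run a Moser-type iteration on $|u|$. The starting observation is a Kato-type inequality in the Dirichlet form setting: since $\E$ admits a carr\'e du champ and $V\ge 0$, solutions to $\LV u=0$ satisfy $\LV|u|\le 0$ weakly, i.e., $|u|$ is an $\LV$-subsolution. Testing this against $\eta^{2}|u|^{2p-1}$ for a Lipschitz cutoff $\eta$ supported slightly outside $B$ and $p\ge 1$, integrating by parts, absorbing the gradient term, and \emph{discarding} the nonnegative contribution $\int V\eta^{2}|u|^{2p}\d\mu$, yields the Caccioppoli bound
$$\int \eta^{2}\bigl|\nabla_x(|u|^{p})\bigr|^{2}\d\mu\le Cp^{2}\int |\nabla_x \eta|^{2}|u|^{2p}\d\mu.$$
Combined with the Sobolev-Poincar\'e embedding under $(D)$ and $(P_{2})$, Moser's iteration on a dyadic sequence of nested balls produces $\|u\|_{L^\infty(\frac32 B)}\le C(\fint_{2B}|u|^{2}\d\mu)^{1/2}$. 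To lower the exponent from $2$ to $1$, I interpolate via $(\fint |u|^{2})^{1/2}\le \|u\|_{L^\infty}^{1/2}(\fint |u|)^{1/2}$ and apply the Bombieri-Giusti iteration lemma to the family of balls between $B$ and $\frac32 B$; the a priori local boundedness of $u$ (Proposition \ref{infty-schrodinger}) guarantees the iteration starts from a finite quantity, producing $\|u\|_{L^\infty(B)}\le C\fint_{2B}|u|\d\mu$.

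For the H\"older estimate I use the decomposition already central to the proof of Proposition \ref{infty-schrodinger}. Fix $y_{0}\in \frac12 B$ and, for $0<r\le r_{B}/2$, set $B':=B(y_{0},r)$. By Proposition \ref{pointwise-bound} (with $q>Q/2$) there exists $v\in W^{1,2}_{0}(B')$ with $\L v=Vu$ in $B'$; then $h:=u+v$ is $\L$-harmonic in $B'$, and the $RH_{q}$ condition together with the $L^{\infty}$ bound just established yield
$$\|v\|_{L^{\infty}(B'/2)}\le Cr^{2}\Bigl(\fint_{B'}V^{q}\d\mu\Bigr)^{1/q}\|u\|_{L^\infty(B')}\le Cr^{2}\Bigl(\fint_{B'}V\d\mu\Bigr)\|u\|_{L^\infty(B)}.$$
Oscillation decay for $\L$-harmonic functions (classical under $(D)+(P_{2})$) gives $\mathrm{osc}_{B'/2}h\le \sigma\,\mathrm{osc}_{B'}h$ for some universal $\sigma\in(0,1)$, whence
$$\mathrm{osc}_{B(y_{0},r/2)}u\le \sigma\,\mathrm{osc}_{B(y_{0},r)}u + Cr^{2}\Bigl(\fint_{B(y_{0},r)}V\d\mu\Bigr)\|u\|_{L^\infty(B)}.$$
Iterating on $r_{k}=r_{B}/2^{k}$, using Lemma \ref{crit-func-pro-2} to control $r^{2}\fint V$ by $(r/\rho(y_{0}))^{2-Q/q}$ in the regime $r<\rho(y_{0})$, and picking $\theta\in(0,\min\{-\log_{2}\sigma,\,2-Q/q,\,1\})$ so that the series $\sum_{j=0}^{k-1}\sigma^{k-1-j}2^{-j(2-Q/q)}$ collapses to a multiple of $(r_{k}/r_{B})^{\theta}$, one obtains
$$\mathrm{osc}_{B(y_{0},r_{k})}u\le C\Bigl(\frac{r_{k}}{r_{B}}\Bigr)^{\theta}\|u\|_{L^\infty(B)}\Bigl(1+r_{B}^{2}\fint_{B}V\d\mu\Bigr).$$
Applying this with $r_{k}\sim d(x,y)$ for $x,y\in\frac12 B$ with $d(x,y)<r_{B}/4$ gives the stated inequality; the case $d(x,y)\ge r_{B}/4$ is immediate from the $L^{\infty}$ bound since $(d(x,y)/r_{B})^{\theta}\gtrsim 1$.

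The delicate point is the H\"older iteration: one must choose $\theta$ strictly below both $-\log_{2}\sigma$ (the decay rate inherited from $\L$-harmonic functions) and $2-Q/q$ (the scaling gain from the $RH_{q}$ potential) so that two geometric series sum correctly to the target $(r_{k}/r_{B})^{\theta}$ factor. Scales $r\ge\rho(y_{0})$, where Lemma \ref{crit-func-pro-2} only supplies polynomial growth for $r^{2}\fint V$, are handled by starting the iteration from the first dyadic scale not exceeding $\min\{r_{B},\rho(y_{0})\}$ and absorbing the finitely many large-scale oscillations into the factor $(1+r_{B}^{2}\fint_{B}V)\|u\|_{L^\infty(B)}$ on the right-hand side.
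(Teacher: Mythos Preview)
Your proposal is correct, but it takes a considerably more hands-on route than the paper's proof.

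For the $L^{\infty}$ bound, the paper sidesteps Moser iteration entirely: it observes directly that $u^{2}$ is a nonnegative $\mathcal{L}$-subharmonic function (testing $\mathscr{L}u=0$ against $2u\varphi$ gives $\int\langle\nabla_x u^{2},\nabla_x\varphi\rangle\,\d\mu=-2\int\varphi(|\nabla_x u|^{2}+Vu^{2})\,\d\mu\le 0$), and then invokes the mean value inequality for subharmonic functions from \cite[Theorem 5.4]{BM1995} as a black box to get $\|u\|_{L^{\infty}(B)}\le C\fint_{2B}|u|\,\d\mu$. Your Kato/Moser/Bombieri--Giusti argument reaches the same conclusion but reproves what is already packaged in that reference.

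For H\"older continuity, both you and the paper use the same splitting $u=h-v$ with $\mathcal{L}v=Vu$ in $B$ and $h=u+v$ $\mathcal{L}$-harmonic, but the paper again cites a ready-made result: \cite[Theorem 5.13]{BM1995} gives H\"older regularity for $v$ with exponent $\theta_{1}\in(0,\min\{1,2-Q/q\})$ and bound $|v(x)-v(y)|\le C(d(x,y)/r_{B})^{\theta_{1}}r_{B}^{2}\mu(B)^{-1/q}\|Vu\|_{L^{q}(B)}$, while the De Giorgi--Nash--Moser theory gives a H\"older exponent $\theta_{2}$ for $h$. One then simply takes $\theta=\min\{\theta_{1},\theta_{2}\}$ and uses $RH_{q}$ once to turn $(\fint|V|^{q})^{1/q}$ into $\fint V$. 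No iteration over dyadic scales is needed.

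What your approach buys is self-containedness and an explicit mechanism showing \emph{why} the exponent $2-Q/q$ appears (through the summability of the perturbation series via Lemma \ref{crit-func-pro-2}); the paper's approach buys brevity by outsourcing both regularity facts to \cite{BM1995}. Either is acceptable, but if you are writing this up you may wish to note that the oscillation iteration is unnecessary once one has H\"older regularity for the Poisson problem on the full ball.
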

\begin{proof}
By the previous proposition, $u$ is locally bounded, and hence $u^2\in W^{1,2}_\loc(\Omega)$.
For any ball $B=B(x_B,r_B)$ with $2B\subset \Omega$, let $\varphi\ge 0$ be a Lipschitz function
supported in $2B$, then it holds
\begin{eqnarray*}
\int_{2B}\langle \nabla_x u^2,  \nabla_x \varphi\rangle\d\mu&&=\int_{2B}\langle  \nabla_x  u , \nabla_x (2u\varphi)\rangle\d\mu- \int_{2B}2\varphi\langle  \nabla_x  u ,  \nabla_x u\rangle\d\mu\\
&&=-\int_{2B}2Vu^2\varphi\d\mu- \int_{2B}2\varphi| \nabla_x  u|^2\d\mu\le0.
\end{eqnarray*}
Therefore $u^2$ is a non-negative sub-harmonic function. \cite[Theorem 5.4]{BM1995} gives  that
 $$\|u\|_{L^\infty(B)}\le C\fint_{2B}|u|\d\mu.$$

Let us prove the H\"older continuity.
As $u\in L^\infty(B)$ and $V\in RH_q(X)\cap A_\infty(X)$, $q>\max\{1, Q/2\}$, by \cite[Theorem 5.13]{BM1995} (see also \cite[Lemma 2.8]{Jia2011})
the solution of the equation $\L v=Vu$, $v\in W^{1,2}_0(B)$  is H\"older continuous.
Moreover, there exits $\theta_1\in (0,\min\{1,2-{Q}/{q}\})$ such that for any $x,y\in \frac{1}{2}B$,
$$|v(x)-v(y)| \le \left(\frac{d(x,y)}{r_B}\right)^{\theta_1} \frac{r_B^2}{\mu(B)^{1/q}} \|Vu\|_{L^q(B)}.$$
Once more we use $u+v$ as an $\L$-harmonic function to conclude that there exits $\theta_2\in (0,1)$ such that for any $x,y\in \frac{1}{2}B$,
\begin{eqnarray*}
|u(x)-u(y)|&&\le |u(x)+v(x)- u(y)-v(y)|+|v(x)-v(y)| \\
&&\le C\left(\frac{d(x,y)}{r_B}\right)^{\theta_2} \fint_{B}|u+v|\d\mu+ C\left(\frac{d(x,y)}{r_B}\right)^{\theta_1} \frac{r_B^2}{\mu(B)^{1/q}} \|Vu\|_{L^q(B)}\\
&&\le  C\left(\frac{d(x,y)}{r_B}\right)^{\min\{\theta_1,\theta_2\}}\left( \fint_{B}|u|\d\mu+ r_B^2
\left(\fint_{B}|Vu|^q\d\mu\right)^{1/q}\right)\\
&&\le C\left(\frac{d(x,y)}{r_B}\right)^{\min\{\theta_1,\theta_2\}}\left( \fint_{B}|u|\d\mu+ \|u\|_{L^\infty(B)}r_B^2
\left(\fint_{B}|V|^q\d\mu\right)^{1/q}\right)\\
&&\le C\left(\frac{d(x,y)}{r_B}\right)^{\min\{\theta_1,\theta_2\}}\|u\|_{L^\infty(B)} \left( 1+r_B^2\fint_{B}V\d\mu\right).
\end{eqnarray*}
Letting ${\theta}=\min\{\theta_1,\theta_2\}$ completes the proof.
\end{proof}

\section{Heat kernel, Poisson kernel and Hardy space}\label{heat-poisson}

\subsection{Heat kernel and Poisson kernel}
\hskip\parindent We now proceed to estimate the heat kernel and Poisson kernel for the Schr\"odinger operator. Recall that $\L$ is the operator generalized by the Dirichlet form $\E$, $\LV=\L+V$ is the Schr\"odinger operator. We denote by
$h_t(x,y)$, $p_t(x,y)$ the kernels of $e^{-t\L}$ and $e^{-t\sqrt \L}$, respectively,
and $h^v_t(x,y)$, $p^v_t(x,y)$ the kernels of $\HV_t=e^{-t\LV}$ and $\PV_t=e^{-t\sqrt \LV}$, respectively.

We begin by a quantitative estimate for solutions to the parabolic Schr\"odinger equation.
The proof is similar to the  Euclidean case presented  in \cite[Proof of Theorem 2]{Ku2000}.

\begin{lemma}\label{Sh-2.1} Assume that $V\in RH_q(X)\cap A_\infty(X)$ with $q>\max\{Q/2,1\}$.
Let $u(x,t)$ be a non-negative weak solution to $(\partial_t+\LV)u=0$ in the
parabolic cube $4Q_0$, $Q_0=B(x_0,r)\times { (t_0-r^2, t_0+r^2)}$.
 Then there exist  constants $C,\epsilon>0$ independent of $u$ and $Q_0$ such that
\begin{align*}
\sup_{Q_0}|u| \le C\lf(\fint_{4Q_0}|u|^2\d\mu\d t\r)^{1/2}\exp\lf\{-\epsilon\lf(1+\frac{r}{\rho(x_0)}\r)^{1/{(k_0+1)}}\r\}.
\end{align*}
\end{lemma}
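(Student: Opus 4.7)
My plan follows the blueprint of Kurata's argument in \cite{Ku2000}, itself adapted from Shen \cite{Sh1995}. The first step is to reduce to an $L^2$-level statement: combining the parabolic Caccioppoli inequality (Lemma \ref{Cacc1}) with the $L^2$-Sobolev inequality that follows from $(D)$ and $(P_2)$, a standard parabolic Moser iteration gives
\begin{align*}
\sup_{Q_0}|u| \le C\Bigl(\fint_{2Q_0}|u|^2\,\d\mu\d t\Bigr)^{1/2}
\end{align*}
with no exponential improvement. It therefore suffices to prove the $L^2$-level decay
\begin{align*}
\int_{2Q_0}|u|^2\,\d\mu\d t \le C\exp\bigl\{-2\epsilon(1+r/\rho(x_0))^{1/(k_0+1)}\bigr\}\int_{4Q_0}|u|^2\,\d\mu\d t.
\end{align*}

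The heart of the argument is a one-step hole-filling contraction. For a parabolic sub-cube $\widehat Q = B(\widehat x,s) \times (\widehat t - s^2, \widehat t + s^2)$ with $2\widehat Q \subset 4Q_0$, I would apply the Fefferman-Phong inequality (Proposition \ref{FH}) slicewise in $t$ to $\phi(\cdot) u(\cdot,t)$, where $\phi$ is a spatial cutoff equal to $1$ on $B(\widehat x,s)$ and supported in $B(\widehat x,2s)$. Integrating in time and absorbing the gradient terms via Lemma \ref{Cacc1} yields
\begin{align*}
\int_{\widehat Q}|u|^2\rho^{-2}\,\d\mu\d t \le \frac{C}{s^2}\int_{2\widehat Q\setminus \widehat Q}|u|^2\,\d\mu\d t.
\end{align*}
By Lemma \ref{crit-func-pro-1}, on $\widehat Q$ one has $\rho(y) \le C\rho(\widehat x)(1+s/\rho(\widehat x))^{k_0/(k_0+1)}$; taking $s\simeq \rho(\widehat x)$ makes the coefficient $s^{-2}$ on the right and the uniform lower bound on $\rho^{-2}$ on the left comparable, and the classical hole-filling trick (adding a small multiple of $\int_{\widehat Q}|u|^2$ to both sides) yields the uniform contraction $\int_{\widehat Q}|u|^2\,\d\mu\d t \le \theta\int_{2\widehat Q}|u|^2\,\d\mu\d t$ for some $\theta \in (0,1)$ independent of $\widehat Q$.

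To harvest the exponential decay, I would iterate this contraction along a nested chain of parabolic cubes $\widehat Q_0 \subset \widehat Q_1 \subset \cdots \subset \widehat Q_N$ fitting between $2Q_0$ and $4Q_0$, with consecutive radii differing by $\sim \rho_{\max}$, where $\rho_{\max} \le C\rho(x_0)(1+r/\rho(x_0))^{k_0/(k_0+1)}$ bounds $\rho$ on all of $4Q_0$ (again by Lemma \ref{crit-func-pro-1}). The number of steps is then $N \sim r/\rho_{\max} \sim (1+r/\rho(x_0))^{1/(k_0+1)}$, which is precisely the quantity $\sigma$ appearing in the target exponent, so the total decay is $\theta^N \le \exp(-\epsilon\sigma)$. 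The main obstacle I anticipate is the one-step contraction: the geometric parameters must be tuned so that the combination of Caccioppoli and Fefferman-Phong yields a constant strictly less than $1$, \emph{uniformly} in $\widehat Q \subset 4Q_0$. This is where the sharp form of Lemma \ref{crit-func-pro-1} and the application of Fefferman-Phong to a cutoff of $u$ (rather than to $u$ itself, which need not lie in $W^{1,2}(X)$) are essential; once this uniform contraction is in hand, the chain iteration is routine.
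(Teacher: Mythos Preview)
Your ingredients are right and the counting $N\sim(1+r/\rho(x_0))^{1/(k_0+1)}$ is correct, but there is a genuine mismatch between your one-step contraction and your iteration scheme. The contraction $\int_{\widehat Q}|u|^2\le\theta\int_{2\widehat Q}|u|^2$ is derived for cubes of radius $s\simeq\rho(\widehat x)$, i.e.\ a \emph{doubling} step at the small scale $\rho$. But your chain consists of concentric cubes centred at $x_0$ with radii between $2r$ and $4r$ and \emph{additive} gaps $\sim\rho_{\max}$; for these cubes the radius is $\sim r\gg\rho$, and $2\widehat Q_{j-1}$ is nowhere near $\widehat Q_j$. To make hole-filling work you would have to establish the one-step inequality $\int_{\widehat Q_{j-1}}|u|^2\rho^{-2}\le C\delta^{-2}\int_{\widehat Q_j\setminus\widehat Q_{j-1}}|u|^2$ directly for the concentric pair with gap $\delta\sim\rho_{\max}$, and that does \emph{not} follow from Lemma~\ref{Cacc1} as a black box: applying slicewise Fefferman--Phong with a cutoff $\phi$ supported in $2B$ and then invoking Caccioppoli on the support of $\phi$ enlarges the region again and destroys the annular structure. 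You would need to run Caccioppoli and Fefferman--Phong with the \emph{same} space--time cutoff so that only the annular term $\int u^2(|\nabla\eta|^2+|\eta\partial_t\eta|)$ survives on the right.

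The paper sidesteps hole-filling entirely and uses Kurata's free-parameter device instead. For each $k\in\cn$ it subdivides the shell between $2Q_0$ and $4Q_0$ into $k$ concentric layers of gap $2r/k$, combines Fefferman--Phong with Caccioppoli at each layer (no annulus on the right is needed, so the full-region Lemma~\ref{Cacc1} suffices), and iterates to obtain
\[
\int_{2Q_0}|u|^2\,\d\mu\d t\le C_1^{2k}k^{2k}\sigma^{-2k}\int_{4Q_0}|u|^2\,\d\mu\d t,\qquad \sigma:=\Bigl(1+\frac{r}{\rho(x_0)}\Bigr)^{1/(k_0+1)}.
\]
Multiplying by $\epsilon^k/k!$, using Stirling to convert $k^k$ into $e^kk!$, and summing over $k$ turns this family of $k$-dependent bounds into the single exponential $\exp(-\epsilon\sigma)$. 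No fixed contraction constant $\theta<1$ is ever produced; the exponential decay emerges from the optimization over $k$.
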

\begin{proof}
Since $u(x,t)$ is a non-negative weak solution to $(\partial_t+\LV)u=0$ and $V\ge 0$,
$u(x,t)$ is a non-negative sub-solution to $(\partial_t+\mathcal{L})u\le 0$.
The mean value property (see \cite[Theorem 2.1]{St1995}) implies that
\begin{align}\label{q1}
\sup_{Q_0}|u| \le C\lf(\fint_{2Q_0}|u|^2\d\mu\d t\r)^{1/2}.
\end{align}

For a given $k\in\cn$,  we divide the annulus $4Q_0\setminus 2Q_0$ into $2k$ equal shares.
For $j=0,1,\cdots,2k$,  we set $\alpha_{j}=2+j/k$. For each even number $1\le j\le 2k$,
we take a Lipschitz function $\eta_j$ with support in $\alpha_{j-1}B$ such that $\eta_j=1$ on $\alpha_{j-2}B$ and $|\nabla_x\eta_j|\le Ck/{r}$.

On the one hand, applying Fefferman-Phong's inequality (see Proposition \ref{FH}) yields that for each even number $2\le j\le 2k$,
\begin{eqnarray*}
\int_{\alpha_{j-2}Q_0}{|\eta_j u|^2}{\rho^{-2}}\d\mu\d t
&&\le C\int_{t_0-(\alpha_{j-1}r)^2}^{t_0+(\alpha_{j-1}r)^2}\lf(\int_X|u\nabla_x\eta_j|^2\d\mu+\int_X|\eta_j \nabla_x u|^2\d\mu+\int_X|\eta_j u|^2V\d\mu\r)\d t \nonumber\\
&&\le C\frac{k^2}{r^2}\int_{\alpha_{j-1}Q_0}|u|^2\d\mu\d t+C\int_{\alpha_{j-1}Q_0}(|\nabla_x u|^2+|u|^2V)\d\mu\d t \nonumber\\
&&\le C\frac{k^2}{r^2}\int_{\alpha_{j}Q_0}|u|^2\d\mu\d t,
\end{eqnarray*}
where the last inequality is due to parabolic Caccioppoli's inequality (Lemma \ref{Cacc1}).

On the other hand, thanks to Lemma \ref{crit-func-pro-1}, it follows that, for any $x\in 4B(x_0,r)$,
\begin{align*}
1+\frac{r}{\rho(x)}\ge1+\frac{cr}{\rho(x_0)}\lf(1+\frac{d(x,x_0)}{\rho(x_0)}\r)^{{-k_0}/{(k_0+1)}}\ge c\lf(1+\frac{r}{\rho(x_0)}\r)^{1/{(k_0+1)}}.
\end{align*}

One may use the two inequalities above to deduce that
\begin{align*}
\int_{\alpha_{j-2} Q_0}{|u|^2}\d\mu\d t
&\le C^2\lf(1+\frac{r}{\rho(x_0)}\r)^{{-2}/{(k_0+1)}}
\int_{\alpha_{j-2}Q_0}{|u(x,t)|^2}\lf(1+\frac{r}{\rho(x)}\r)^2\d\mu(x)\d t \\
&\le C^2\lf(1+\frac{r}{\rho(x_0)}\r)^{{-2}/{(k_0+1)}}
\int_{\alpha_{j-2}Q_0}{|u(x,t)|^2}\left[1+\lf(\frac{r}{\rho(x)}\r)^2\right]\d\mu(x)\d t\\
&\le C_1^2k^2\lf(1+\frac{r}{\rho(x_0)}\r)^{{-2}/{(k_0+1)}}\int_{\alpha_{j}Q_0}{|u|^2}\d\mu\d t,
\end{align*}
where $C_1>0$ is a harmless constant.

Combining the estimates for each $j=2,4,\cdots,2k$, we deduce  that
\begin{align*}
\int_{2Q_0}{|u|^2}\d\mu\d t
&\le   C_1^{2k}k^{2k}\lf(1+\frac{r}{\rho(x_0)}\r)^{{-2k}/{(k_0+1)}}\int_{4Q_0}{|u|^2}\d\mu\d t,
\end{align*}
which, together with \eqref{q1} and  $(D)$, implies that, for every $k=0,1,2,\cdots$,
\begin{align*}
\lf(1+\frac{r}{\rho(x_0)}\r)^{k/(k_0+1)}\sup_{Q_0}|u|
&\le C_2\lf(1+\frac{r}{\rho(x_0)}\r)^{k/(k_0+1)}\lf(\fint_{2Q_0}|u|^2\d\mu\d t\r)^{1/2} \nonumber\\
&\le C_2C_1^{k}k^{k}\lf(\fint_{4Q_0}|u|^2\d\mu\d t\r)^{1/2}.
\end{align*}
Now by Stirling's approximation, $k^k\sqrt{2\pi k}\sim e^{k}k! $ as $k\to \infty$, there holds
\begin{align}\label{q3}
\lf(1+\frac{r}{\rho(x_0)}\r)^{k/(k_0+1)}\sup_{Q_0}|u|\le  C\frac{C_1^{k}}{\sqrt {2\pi k}}e^{k}k!\lf(\fint_{4Q_0}|u|^2\d\mu\d t\r)^{1/2}.
\end{align}
Finally, multiplying both sides of \eqref{q3} by $\epsilon^{k}/{k!}$ for a positive constant $\epsilon$ small than $C_1e/2$, and summing over $k$, we arrive at
\begin{align*}
\sup_{Q_0}|u|\sum_{k=0}^\fz\frac{{\epsilon^{k}\lf(1+{r}/{\rho(x_0)}\r)^{k/(k_0+1)}}}{k!}\le C\lf(\fint_{4Q_0}|u|^2\d\mu\d t\r)^{1/2}\sum_{k=0}^\fz(C_1e\epsilon)^{k},
\end{align*}
and hence
\begin{align*}
\sup_{Q_0}|u|\le {C}\lf(\fint_{4Q_0}|u|^2\d\mu\d t\r)^{1/2} \exp\lf\{-\epsilon\lf(1+\frac{r}{\rho(x_0)}\r)^{{1}/{(k_0+1)}} \r\},
\end{align*}
as desired.
\end{proof}

By the estimates for heat kernel of heat semigroup of   operator $\L$ (cf. \cite{St1995,St1996}), we know that
\begin{align}\label{est-heat-laplace}
0\le  h^v_t(x,y)\le  h_t(x,y) \le \frac{C}{\mu(B(x, \sqrt{t}))} \exp \left\{-\frac{d(x, y)^{2}}{c t}\right\}.
\end{align}
Combining the results from \cite{St1996}, and Lemma \ref{Sh-2.1},
we derive the following estimates
for the heat kernel and Poisson kernel for the Schr\"odinger operators.
For the proofs, we shall make use of the results from \cite{Sal995,St1996}
and also \cite{BDL18}. Note that  although \cite[Section 7]{BDL18}
was presented on manifolds, its proof works also for our Dirichlet spaces.

\begin{proposition}\label{est-heat-kernel}
Let $(X,d,\mu,\mathscr{E})$ be a complete Dirichlet metric space satisfying $(D)$ and $(P_2)$.
Assume that $V\in RH_q(X)\cap A_\infty(X)$ with $q>\max\{Q/2,1\}$. Then the following statements hold true:

  (i) Gaussian upper bound: For every $m\in \{0\}\cup\cn$, there exist constants $C,c,\epsilon>0$ such that, for any $x,y\in X$ and  $t>0$,
$$
|t^{m}\partial^m_th^v_t(x,y)|\le \frac{C}{\mu(B(x, \sqrt{t}))} \exp \left\{-\frac{d(x, y)^2}{c t}\right\}\exp\lf\{-\epsilon\lf(1+\frac{\sqrt{t}}{\rho(x)}\r)^{{1}/{(k_0+1)}} \r\}.
\leqno(GUB)
$$

(ii) For every $m\in\cn$ and $N>0$, then there exists a constant $C>0$ such that, for any $x\in X$ and $t>0$,
\begin{align*}
   |t^m\partial^m_t{e^{-t\LV}}(1)(x)|\le C \lf(\dfrac{\sqrt{t}}{\rho(x)}\r)^{\dz}\lf(1+\dfrac{\sqrt{t}}{\rho(x)}\r)^{-N},
\end{align*}
{where $\delta\in(0,\min\{1,2-Q/q\})$.}

(iii) There exist constants $C,c>0$ and $\theta\in(0,\min\{1,2-Q/q\})$, such that
\begin{align*}
|h^v_t(x,y)-h^v_t(\overline{x},y)|\le C\lf(\frac{d(x,\overline{x})}{\sqrt{t}}\r)^{\theta}
 \frac{1}{\mu(B(x, \sqrt{t}))} \exp \left\{-\frac{d(x, y)^2}{c t}\right\},
\end{align*}
whenever $d(x, \overline{x}) < d(x, y)/4$, $d(x, \overline{x}) < \rho(x)$ and  {$t > 0$.}

(iv) There exist constants $C,c>0$ and $\theta\in(0,\min\{1,2-Q/q\})$, such that
\begin{align*}
|t\partial_th^v_t(x,y)-t\partial_th^v_t(\overline{x},y)|
\le C\lf(\frac{d(x,\overline{x})}{\sqrt{t}}\r)^{\theta}\frac{1}{\mu(B(x, \sqrt{t}))} \exp \left\{-\frac{d(x, y)^2}{c t}\right\},
\end{align*}
whenever $d(x, \overline{x}) < d(x, y)/4$, $d(x, \overline{x}) < \rho(x)$ and  $t > 0$.
\end{proposition}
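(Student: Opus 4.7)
The plan is to combine three ingredients: the standard Gaussian heat kernel bound and H\"older estimates for the unperturbed operator $\L$ (from \cite{St1995,St1996,Sal995}); the Duhamel identity that compares $H^v_t$ with $H_t$ through the weight $V$; and the quantitative pointwise decay Lemma \ref{Sh-2.1}, which contributes the extra factor $\exp\{-\epsilon(1+\sqrt{t}/\rho(x))^{1/(k_0+1)}\}$. The $m\ge 1$ derivative estimates in (i) and (ii) will be reduced to the $m=0$ case by analyticity of $\{e^{-t\LV}\}$ and the semigroup decomposition $H^v_t = H^v_{t/2}\circ H^v_{t/2}$.

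For (i) with $m=0$, the sub-Markovian bound $0 \le h^v_t \le h_t$ gives the Gaussian prefactor. For the additional $\rho$-decay, fix $x$ and view $u(y,s) := h^v_s(x,y)$ as a non-negative weak solution of $(\partial_s + \LV_y)u = 0$. Apply Lemma \ref{Sh-2.1} at $(y,t)$ on the parabolic cube $Q_0 = B(y,\sqrt{t}/8)\times(t-t/64,\,t+t/64)$; the $L^2$-mean over $4Q_0$ is controlled by the standard Gaussian bound for $h_s$, which yields the claimed decay. For $m\ge 1$, combine the $L^2$-analyticity bound $\|(t\LV)^m e^{-(t/2)\LV}\|_{L^2\to L^2}\le C_m$ with the $m=0$ kernel bound for $e^{-(t/2)\LV}$ via Davies' exponential perturbation method, which preserves both the Gaussian and the $\rho$-decay factors (the technique is adapted from \cite[Section 7]{BDL18}).

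For (ii), when $\sqrt{t}\ge\rho(x)$, apply Lemma \ref{Sh-2.1} to the non-negative solution $s\mapsto e^{-s\LV}(1)(x)$ on $B(x,\sqrt{t}/2)\times(t/2,3t/2)$ using $\|e^{-s\LV}(1)\|_\infty\le 1$; the super-polynomial decay absorbs into $(\sqrt{t}/\rho(x))^{\delta}(1+\sqrt{t}/\rho(x))^{-N}$ for any $N$. When $\sqrt{t}<\rho(x)$, apply the Duhamel identity
\begin{equation*}
0 \le 1 - e^{-t\LV}(1)(x) = \int_0^t e^{-(t-s)\L}\bigl(V\,e^{-s\LV}(1)\bigr)(x)\,ds \le \int_0^t e^{-(t-s)\L}(V)(x)\,ds,
\end{equation*}
bound the inner convolution by Lemma \ref{crit-func-pro-3}, and integrate in $s$ to obtain $C(\sqrt{t}/\rho(x))^{2-Q/q}$; hence $\delta = 2-Q/q$. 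The $m\ge 1$ case follows by writing
\begin{equation*}
t\partial_t e^{-t\LV}(1)(x) = -2\int_X (t/2)\partial_s h^v_{t/2}(x,y)\cdot e^{-(t/2)\LV}(1)(y)\,d\mu(y)
\end{equation*}
and combining the $m=1$ kernel bound of (i) with the $m=0$ estimate already proved.

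For (iii) and (iv), freeze $y$ and $t$ and view $u(x) := h^v_t(x,y)$ (respectively $u(x) := t\partial_t h^v_t(x,y)$) as a weak solution of $\LV u = g$ on $B := B(x, d(x,y)/4)$, with source $g = -\partial_t h^v_t(\cdot,y)$ (respectively $g = -\partial_t(t\partial_t h^v_t)(\cdot,y)$) pointwise controlled by (i). Since $d(x,\bar x)<\rho(x)$, Lemmas \ref{crit-func-pro-1} and \ref{crit-func-pro-2} yield $\rho(\bar x)\sim\rho(x)$ and $r_B^2\fint_B V\,d\mu \le C$. Following the proof of Proposition \ref{MVP-Schrodinger} with the non-zero source handled by Proposition \ref{pointwise-bound} (apply it to $\L v = Vu - g$ on $B$ so that $u+v$ is $\L$-harmonic), one obtains a H\"older modulus of the form $(d(x,\bar x)/r_B)^{\theta}$ with $r_B\sim\sqrt{t}$; substituting the $L^\infty$-bound from (i) on $B$ yields the Gaussian factor on the right-hand side. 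The main technical obstacle is the Davies-type perturbation step in (i) for $m\ge 1$, which must simultaneously preserve the spatial Gaussian decay and the $\rho$-exponential decay when passing from $L^2$ operator bounds to pointwise kernel bounds.
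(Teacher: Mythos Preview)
Your overall strategy is sound, but the paper takes shorter routes at every step, and your argument for (iii) has a genuine gap.

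For (i) with $m\ge 1$ the paper does not invoke a Davies-type perturbation. Once the $m=0$ bound is in hand, analyticity of $z\mapsto e^{-z\LV}$ on a sector (which already carries the Gaussian and $\rho$-decay factors, cf.\ \cite[Theorem~2.6]{St1995}, \cite{da89}) together with Cauchy's integral formula on a circle of radius $t/2$ immediately gives the time-derivative bounds. This avoids the ``technical obstacle'' you single out.

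For (ii) the paper's argument is a single line and covers all $m\ge 1$ at once: since $\partial_t e^{-t\LV}(1)=-e^{-t\LV}V$, one has
\[
t^m\partial_t^m e^{-t\LV}(1)(x)\;=\;-\,t\!\int_X t^{m-1}\partial_t^{m-1}h^v_t(x,y)\,V(y)\,d\mu(y),
\]
and then part (i), with $N$ chosen large enough to absorb the exponent $k_1$ from Lemma~\ref{crit-func-pro-3}, combined with Lemma~\ref{crit-func-pro-3} itself, gives the bound directly. No case split $\sqrt{t}\gtrless\rho(x)$ or semigroup factorisation is needed.

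For (iv) the paper again avoids repeating an elliptic argument: it writes
\[
t\partial_t h^v_t(x,y)=2\!\int_X h^v_{t/2}(x,z)\,\bigl[s\partial_s h^v_s(z,y)\bigr]_{s=t/2}\,d\mu(z),
\]
applies (iii) to $h^v_{t/2}(\cdot,z)$ and (GUB) to the second factor, and integrates.

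The real problem is in your (iii). You define $B=B(x,d(x,y)/4)$ and then assert $r_B\sim\sqrt{t}$; these two scales are unrelated. The desired modulus $(d(x,\bar x)/\sqrt{t})^\theta$ forces you to run the H\"older estimate at scale $r_B\sim\sqrt{t}$, but at that scale the hypothesis $d(x,\bar x)<\rho(x)$ no longer gives $r_B^2\fint_B V\,d\mu\le C$: by Lemma~\ref{crit-func-pro-2} this quantity is $\sim(\sqrt{t}/\rho(x))^{k_1}$ when $\sqrt{t}\ge\rho(x)$, so the factor $(1+r_B^2\fint_B V\,d\mu)$ in Proposition~\ref{MVP-Schrodinger} is unbounded. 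One can in principle absorb this polynomial loss into the exponential $\rho$-decay from (i), but you have not said so, and the bookkeeping (including the source term $g=-\partial_t h^v_t$ and the case $d(x,y)<\sqrt{t}$) is missing. The paper sidesteps all of this by writing
\[
h^v_t(x,y)-h^v_t(\bar x,y)=\bigl[(h_t-h^v_t)(x,y)-(h_t-h^v_t)(\bar x,y)\bigr]+\bigl[h_t(x,y)-h_t(\bar x,y)\bigr],
\]
invoking the standard H\"older regularity of $h_t$ from \cite{Sal995,St1996} for the second bracket and \cite[Proposition~7.15]{BDL18} (based on the Kato--Trotter formula and Proposition~\ref{heat-kernel-diff}) for the first. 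This decomposition is the missing idea; it decouples the H\"older modulus from the potential and makes the scale issue disappear.
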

\begin{proof}
(i)
It suffices to consider the case $m=0$ since the other case $m>0$ is a direct consequence of Cauchy's integral formula; see \cite[Theorem 2.6]{St1995} or \cite{da89} for instance.

When $m=0$, noticing that the heat kernel $h^v_t(x,y)$
is a weak solution to $\partial_tu+\LV u=0$ in parabolic cube $B(x,\sqrt{t}/2)\times (3t/4,5t/4)$,
we conclude by Lemma \ref{Sh-2.1}, \eqref{est-heat-laplace} and $(D)$ that
\begin{align*}
|h^v_t(x,y)|
&\le C\lf(\fint_{3t/4}^{5t/4}\fint_{B(x,\sqrt{t}/2)}|h^v_s(z,y)|^2\d\mu(z)\d s\r)^{1/2}\exp\lf\{-\epsilon\lf(1+\frac{\sqrt{t}}{\rho(x)}\r)^{{1}/{(k_0+1)}} \r\}\\
&\le C\lf(\fint_{3t/4}^{5t/4}\fint_{B(x,\sqrt{t}/2)}\frac{\d\mu(z)\d s}{[\mu(B(y,\sqrt{s}))]^2}\r)^{1/2}\exp\lf\{-\epsilon\lf(1+\frac{\sqrt{t}}{\rho(x)}\r)^{{1}/{(k_0+1)}} \r\}\\
&\le \frac{C}{\mu(B(y,\sqrt{t}))}\exp\lf\{-\epsilon\lf(1+\frac{\sqrt{t}}{\rho(x)}\r)^{{1}/{(k_0+1)}} \r\},
\end{align*}
which, together with \eqref{est-heat-laplace} and $(D)$ again, implies that
\begin{align*}
|h^v_t(x,y)|
&=|h^v_t(x,y)|^{1/2}|h^v_t(x,y)|^{1/2} \\
&\le \frac{C}{\mu(B(x, \sqrt{t}))^{1/2}\mu(B(y,\sqrt{t}))^{1/2}} \exp \left\{-\frac{d(x, y)^2}{c t}\right\}
 \exp\lf\{-\epsilon\lf(1+\frac{\sqrt{t}}{\rho(x)}\r)^{{1}/{(k_0+1)}} \r\} \\
&\le \frac{C}{\mu(B(x, \sqrt{t}))} \exp \left\{-\frac{d(x, y)^2}{c t}\right\}
 \exp\lf\{-\epsilon\lf(1+\frac{\sqrt{t}}{\rho(x)}\r)^{{1}/{(k_0+1)}} \r\}.
\end{align*}

(ii) Note that
\begin{align*}
\partial_te^{-t\LV}(1)(x)=-\LV e^{-t\LV}(1)(x)=-e^{-t\LV}V(x)=-\int_Xh^v_t(x,y)V(y)\d\mu(y).
\end{align*}
This, together with (i) and  Lemma \ref{crit-func-pro-3}, yields that
\begin{align*}
|t^m\partial^m_t e^{-t\LV}(1)(x)|
&\le C\int_X\frac{t}{\mu(B(x,\sqrt{t}))}\exp\lf\{-\frac{d(x,y)^2}{ct}\r\}\lf(1+\frac{\sqrt{t}}{\rho(x)}\r)^{-N-k_1-2}V(y)\d\mu(y) \\
&\le C {\lf(\dfrac{\sqrt{t}}{\rho(x)}\r)^{2-Q/q}}\lf(1+\dfrac{\sqrt{t}}{\rho(x)}\r)^{-N-2}
 \le C {\lf(\dfrac{\sqrt{t}}{\rho(x)}\r)^\dz}\lf(1+\dfrac{\sqrt{t}}{\rho(x)}\r)^{-N},
\end{align*}
where $k_1$ is as in Lemma \ref{crit-func-pro-3}, and $\delta\in(0,\min\{1,2-Q/q\})$.

(iii) By \cite[Propositon 7.15]{BDL18} together with the H\"older regularity of $h_t(x,y)$
from \cite{Sal995,St1996}, we know that
\begin{align*}
|h^v_t(x,y)-h^v_t(\overline{x},y)|
&\le |[h_t(x,y)-h^v_t(x,y)]-[h_t(\overline{x},y)-h^v_t(\overline{x},y)]|+|h_t(x,y)-h_t(\overline{x},y)| \\
&\le C{\lf(\frac{d(x,\overline{x})}{\sqrt{t}}\r)^\theta}\frac{1}{\mu(B(x, \sqrt{t}))} \exp \left\{-\frac{d(x, y)^2}{c t}\right\}.
\end{align*}

(iv) It follows from the semigroup property, $(D)$, (iii), and $(GUB)$ that
\begin{align*}
|t\partial_t h^v_t(x,y)-t\partial_t h^v_t(\overline{x},y)|
&=\lf|2\int_X[h^v_{t/2}(x,z)-h^v_{t/2}(\overline{x},z)]\lf.s\frac{\partial h^v_s(z,y)}{\partial s}\r|_{s=t/2}\d\mu(z)\r| \\
&\le C\int_X{\lf(\frac{d(x,\overline{x})}{\sqrt{t}}\r)^\theta}\frac{1}{\mu(B(x, \sqrt{t}))} \exp \left\{-\frac{d(x, z)^2}{c t}\right\}
 {\lf|\lf.s\frac{\partial h^v_s(z,y)}{\partial s}\r|_{s=t/2}\r|}\d\mu(z) \\
&\le C{\lf(\frac{d(x,\overline{x})}{\sqrt{t}}\r)^\theta}\frac{1}{\mu(B(x, \sqrt{t}))} \exp \left\{-\frac{d(x, y)^2}{c t}\right\}.
\end{align*}
The proof is completed.
\end{proof}

\begin{remark}\rm
In the estimate (iii) of the previous proposition, by more careful analysis, one can improve the estimate to
\begin{align*}
|h^v_t(x,y)-h^v_t(\overline{x},y)|\le C{\lf(\frac{d(x,\overline{x})}{\sqrt{t}}\r)^\theta}
 \frac{1}{\mu(B(x, \sqrt{t}))} \exp \left\{-\frac{d(x, y)^2}{c t}\right\}\lf(1+\frac{\sqrt{t}}{\rho(x)}+\frac{\sqrt{t}}{\rho(y)}\r)^{-N},
\end{align*}
whenever $d(x, \overline{x}) < \sqrt{t}$. Therefore, Proposition \ref{est-heat-kernel} (iv) can also be improved.
To keep the length of the paper, we will not address it.
\end{remark}

The following result was proved in \cite[Proposition 7.13]{BDL18}
by Kato-Trotter formula
$$h_t(x,y)-h^v_t(x,y)=\int_0^t h_s(x,z)V(z)h^v_{t-s}(z,y)\d\mu(z)\d s.$$
\begin{proposition}\label{heat-kernel-diff}
  Let $(X,d,\mu,\mathscr{E})$ be a complete Dirichlet metric space satisfying $(D)$ and $(P_2)$.
Let $V\in RH_q(X)\cap A_\infty(X)$, $q>\max\{Q/2,1\}$.
Then there exists $C,c>0$ such that for all $x,y\in X$ and $t>0$, it holds
\begin{equation*}
0\le h_t(x,y)-h^v_t(x,y)\le C\left(\frac{\sqrt t}{\sqrt t+\rho(x)}\right)^{2-Q/q} \frac{1}{\mu(B(x,\sqrt t))}\exp\left\{-\frac{d(x,y)^2}{ct}\right\}.
\end{equation*}
\end{proposition}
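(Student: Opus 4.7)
The plan is to use the Kato--Trotter identity
$$h_t(x,y)-h^v_t(x,y)=\int_0^t\int_X h_s(x,z)V(z)h^v_{t-s}(z,y)\,\d\mu(z)\,\d s,$$
together with the Gaussian upper bound $(GUB)$ of Proposition~\ref{est-heat-kernel}, the Cauchy--Schwarz-type convexity inequality $\frac{d(x,z)^2}{s}+\frac{d(z,y)^2}{t-s}\ge \frac{d(x,y)^2}{t}$, and Lemma~\ref{crit-func-pro-3} to carry out the $V$-integration. The non-negativity $h_t-h^v_t\ge 0$ is immediate from the non-negativity of $V$, $h_s$ and $h^v_{t-s}$.

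For the upper bound I would first dispose of the easy case $\sqrt t\ge \rho(x)$: here the prefactor $(\sqrt t/(\sqrt t+\rho(x)))^{2-Q/q}$ is bounded below by a positive constant, so the conclusion follows from $h_t-h^v_t\le h_t$ and the Gaussian upper bound on $h_t$. Assume then $\sqrt t<\rho(x)$, and split the $s$-integral at $t/2$. For $s\in(0,t/2)$, rewrite the product of Gaussians as
$$e^{-d(x,z)^2/(cs)}e^{-d(z,y)^2/(c(t-s))}\le e^{-d(x,y)^2/(2ct)}e^{-d(x,z)^2/(2cs)}e^{-d(z,y)^2/(2c(t-s))},$$
which factors $e^{-d(x,y)^2/(2ct)}$ outside the $z$-integral. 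Using $(D)$, replace $1/\mu(B(z,\sqrt{t-s}))$ by $1/\mu(B(x,\sqrt t))$ at the cost of a polynomial in $d(x,z)/\sqrt s$, and absorb it into $e^{-d(x,z)^2/(2cs)}$ by enlarging the constant. Bound $e^{-d(z,y)^2/(2c(t-s))}\le 1$, and apply Lemma~\ref{crit-func-pro-3}: since $\sqrt s<\sqrt t<\rho(x)$, one gets
$$\int_X \frac{V(z)}{\mu(B(x,\sqrt s))}e^{-d(x,z)^2/(c's)}\d\mu(z)\le \frac{C}{s}\lf(\frac{\sqrt s}{\rho(x)}\r)^{2-Q/q}.$$
Integrating $s^{-1}(\sqrt s/\rho(x))^{2-Q/q}$ on $(0,t/2)$ (the $s$-exponent is positive because $q>Q/2$) produces the desired factor $(\sqrt t/\rho(x))^{2-Q/q}$.

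For $s\in(t/2,t)$, I would exploit the symmetry $h_s(x,z)=h_s(z,x)$ to swap the roles of $x$ and $y$ and repeat the previous argument, obtaining the same bound but with $\rho(y)$ in place of $\rho(x)$. To convert $\rho(y)$ back to $\rho(x)$, invoke Lemma~\ref{crit-func-pro-1}, which produces a factor of the form $(1+d(x,y)/\rho(x))^{k_0(2-Q/q)}$. Since $\sqrt t\le\rho(x)$, this factor is dominated by $(1+d(x,y)/\sqrt t)^{k_0(2-Q/q)}$ and is absorbed into the Gaussian $e^{-d(x,y)^2/(2ct)}$ by slightly enlarging $c$.

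The main obstacle will be the bookkeeping of the Gaussian constants through the various splittings, in particular the passage from $\mu(B(z,\cdot))$ to $\mu(B(x,\cdot))$ or $\mu(B(y,\cdot))$ and the $\rho(y)\to\rho(x)$ conversion. The delicate subcase is when $\sqrt{t/2}>\rho(y)$ forces the $k_1$-branch of Lemma~\ref{crit-func-pro-3}; however, Lemma~\ref{crit-func-pro-1} then forces $d(x,y)$ to be comparably large, so the rapid decay of $e^{-d(x,y)^2/(ct)}$ absorbs the extra polynomial growth and closes the argument.
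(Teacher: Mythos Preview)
Your proposal is correct and follows precisely the approach the paper indicates: the paper does not give its own proof but cites \cite[Proposition~7.13]{BDL18}, noting that the argument proceeds via the Kato--Trotter identity. Your outline---splitting at $s=t/2$, factoring the Gaussian via the convexity inequality, invoking Lemma~\ref{crit-func-pro-3} on the appropriate half, and using Lemma~\ref{crit-func-pro-1} to pass from $\rho(y)$ back to $\rho(x)$ (with the resulting polynomial absorbed into the Gaussian since $\rho(x)>\sqrt t$)---is exactly how that cited proof goes, and your handling of the delicate subcase $\sqrt{t/2}>\rho(y)$ is sound once one notes that $(\sqrt t/\rho(x))^{k_1}\le(\sqrt t/\rho(x))^{2-Q/q}$ because $k_1>2-Q/q$ and $\sqrt t/\rho(x)<1$.
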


The  estimates for the Poisson kernel follow from the heat kernel estimates and the Bochner subordination formula,
\begin{align}\label{subordination}
e^{-t\sqrt \LV}
&= \frac{1}{2\sqrt{\pi}}\int_0^\fz\frac{t}{s^{1/2}}\exp\lf\{-\frac{t^2}{4s}\r\}e^{-s\LV}\frac{\d s}{s}
=\frac{1}{\sqrt{\pi}}\int_0^\fz \sqrt u e^{-u}e^{-\frac{t^2}{4u}\LV}\frac{\d u}{u}.
\end{align}
\begin{proposition}\label{est-poisson-kernel}
Let $(X,d,\mu,\mathscr{E})$ be a complete Dirichlet metric space satisfying $(D)$ and $(P_2)$.
Assume that $V\in RH_q(X)\cap A_\infty(X)$ with $q>\max\{Q/2,1\}$.
Then for any $N>0$, the following statements hold true:

  (i) Poisson upper bound: For every $m\in \{0\}\cup\cn$, there exists a constant $C>0$ such that, for any $x,y\in X$ and $t>0$,
\begin{eqnarray}\label{est-poisson-1}
|t^{m}\partial^m_tp^v_t(x,y)|\le C\frac{t}{t+d(x,y)}\frac{1}{\mu(B(x, t+d(x,y)))}\lf(1+\frac{t+d(x,y)}{\rho(x)}\r)^{-N}.
\end{eqnarray}

(ii) For every $m\in\cn$, then there exists a constant $C>0$ such that, for any $x\in X$ and $t>0$,
\begin{align}\label{est-poisson-3}
   |t^m\partial^m_te^{-t\sqrt{\LV}}(1)(x)|\le C \lf(\dfrac{{t}}{\rho(x)}\r)^\dz\lf(1+\frac{{t}}{\rho(x)}\r)^{-N},
\end{align}
{where $\delta\in(0,\min\{1,2-Q/q\})$.}

(iii) There exist constants $C,c>0$ and $\theta\in(0,\min\{1,2-Q/q\})$, such that
\begin{align*}
|p^v_t(x,y)-p^v_t(\overline{x},y)|
\le C{\lf(\frac{d(x,\overline{x})}{t+d(x,y)}\r)^\theta}\frac{t}{t+d(x,y)} \frac{1}{\mu(B(x, t+d(x,y)))},
\end{align*}
whenever $d(x, \overline{x}) < d(x, y)/4$, $d(x, \overline{x}) < \rho(x)$ and  $t > 0$.

(iv) There exist constants $C,c>0$ and $\theta\in(0,\min\{1,2-Q/q\})$, such that
\begin{align*}
|t\partial_tp^v_t(x,y)-t\partial_tp^v_t(\overline{x},y)|
\le C{\lf(\frac{d(x,\overline{x})}{t+d(x,y)}\r)^\theta\frac{t}{t+d(x,y)} \frac{1}{\mu(B(x, t+d(x,y)))},}
\end{align*}
whenever $d(x, \overline{x}) < d(x, y)/4$, $d(x, \overline{x}) < \rho(x)$ and  $t > 0$.
\end{proposition}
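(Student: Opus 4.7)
The strategy is to derive each of the four estimates from the corresponding heat-kernel bound in Proposition~\ref{est-heat-kernel} by means of the Bochner subordination formula~\eqref{subordination}, which at the kernel level reads
\[
p^v_t(x,y) \;=\; \frac{1}{\sqrt{\pi}}\int_0^\infty \sqrt{u}\,e^{-u}\,h^v_{t^2/(4u)}(x,y)\,\frac{du}{u},
\]
and similarly for $e^{-t\sqrt{\LV}}(1)(x)$, for differences in the spatial variable $x$, and (after differentiating in $t$) for $t\partial_t p^v_t(x,y)$. The general pattern is to substitute the relevant heat-kernel estimate, make the change of variable $s=t^2/(4u)$, and then split the resulting $u$- or $s$-integral so as to absorb the $\rho(x)$-dependent factor and convert the Gaussian factor into the Poisson-type factor $\tfrac{t}{t+d(x,y)}\mu(B(x,t+d(x,y)))^{-1}$.

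For (i) with $m=0$, I would insert $(GUB)$ and group the doubling measure factor with the Gaussian; a standard use of~$(D)$ gives
\[
\frac{1}{\mu(B(x,\sqrt{s}))}\exp\!\lf\{-\frac{d(x,y)^2}{cs}\r\}
\;\ls\;
\frac{1}{\mu(B(x,t+d(x,y)))}\lf(\frac{s}{t^2+d(x,y)^2}\r)^{N}
\]
for any $N$, and then integrating $\sqrt{u}\,e^{-u}$ against this yields the factor $\tfrac{t}{t+d(x,y)}$ in the usual way. The stretched-exponential factor $\exp\{-\epsilon(1+\sqrt{s}/\rho(x))^{1/(k_0+1)}\}$ in $(GUB)$ decays faster than any polynomial in $\sqrt{s}/\rho(x)$, and so produces the required decay $(1+(t+d(x,y))/\rho(x))^{-N}$ for arbitrary $N$. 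For $m\ge 1$, I would differentiate the subordination integral directly: each $t$-derivative brings out a factor $u^{-1/2}$ or $t^{-1}$, shifting the power of $u$ in the integrand, and the same estimate applies. Part~(ii) is essentially identical, but one substitutes the bound of Proposition~\ref{est-heat-kernel}(ii) and splits the $u$-integral at $u = (t/\rho(x))^{2}$, so that in the range $\sqrt{s}\ge \rho(x)$ one uses the polynomial decay $(1+\sqrt{s}/\rho(x))^{-N-\delta}$ while in the range $\sqrt{s}<\rho(x)$ the factor $(\sqrt{s}/\rho(x))^{\delta}$ is small and, after integration, yields the prefactor $(t/\rho(x))^{\delta}(1+t/\rho(x))^{-N}$.

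For (iii) and (iv), I would substitute the H\"older estimates from Proposition~\ref{est-heat-kernel}(iii)-(iv) into the subordination integrals for $p^v_t(x,y)-p^v_t(\overline{x},y)$ and $t\partial_t p^v_t(x,y)-t\partial_t p^v_t(\overline{x},y)$, respectively. The factor $(d(x,\overline x)/\sqrt{s})^{\theta}$ becomes $(2\sqrt{u}\,d(x,\overline x)/t)^{\theta}$ under $s=t^2/(4u)$, and the extra factor $u^{\theta/2}$ is harmless against $e^{-u}$; the change of variable then produces $(d(x,\overline{x})/(t+d(x,y)))^{\theta}$ after the same doubling argument as in (i). The constraint $d(x,\overline{x})<d(x,y)/4$ and $d(x,\overline{x})<\rho(x)$ is inherited directly from the heat-kernel estimates.

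The main technical obstacle is bookkeeping: the heat-kernel $\rho(x)$-factor is a stretched exponential, whereas the conclusion requires a clean polynomial factor in $(t+d(x,y))/\rho(x)$. One must organize the $u$-integral into the regimes $\sqrt{s}\lessgtr \rho(x)$ and $\sqrt{s}\lessgtr d(x,y)$ and verify, in each regime, that $N$ may be taken arbitrarily large. The actual computations are routine once this splitting is fixed, and they go through uniformly thanks to the stretched-exponential factor dominating all polynomials at infinity.
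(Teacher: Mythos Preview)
Your proposal is correct and follows essentially the same approach as the paper: derive each Poisson-kernel estimate from the corresponding heat-kernel bound in Proposition~\ref{est-heat-kernel} via the Bochner subordination formula~\eqref{subordination}, splitting the $s$-integral at scales comparable to $t^2+d(x,y)^2$ (and, for~(ii), at $\rho(x)^2$) and using that the stretched exponential in $(GUB)$ dominates any polynomial to produce the $(1+(t+d(x,y))/\rho(x))^{-N}$ factor. The paper carries out (i) and~(ii) in detail along these lines and simply refers (iii)--(iv) to the same subordination argument combined with Proposition~\ref{est-heat-kernel}(iii)--(iv), exactly as you describe.
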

\begin{proof}
(i) When $m=0$, it follows from Bochner's subordination formula and $(GUB)$ that
\begin{align*}
|p^v_t(x,y)|
&= \lf|\frac{1}{2\sqrt{\pi}}\int_0^\fz\frac{t}{s^{1/2}}\exp\lf\{-\frac{t^2}{4s}\r\}{h^v_s(x,y)}\frac{\d s}{s}\r| \\ \nonumber
&\le C\int_0^\fz\frac{t}{s^{1/2}}\frac{1}{\mu(B(x,\sqrt{s}))}\exp\lf\{-\frac{t^2+d(x,y)^2}{cs}\r\}\lf(1+\frac{\sqrt{s}}{\rho(x)}\r)^{-N}\frac{\d s}{s}\\ \nonumber
&=C\lf\{\int_0^{t^2+d(x,y)^2}+\int^\fz_{t^2+d(x,y)^2}\r\}\cdots\frac{\d s}{s}.
\end{align*}
To estimate the local part, we conclude by the doubling condition that
\begin{align*}
&\int_0^{t^2+d(x,y)^2}\frac{t}{s^{1/2}}\frac{1}{\mu(B(x,\sqrt{s}))}\exp\lf\{-\frac{t^2+d(x,y)^2}{cs}\r\}\lf(1+\frac{\sqrt{s}}{\rho(x)}\r)^{-N}\frac{\d s}{s} \\
&\ = \int_0^1\frac{t}{\sqrt{r(t^2+d(x,y)^2)}}\frac{1}{\mu(B(x,{\sqrt{r(t^2+d(x,y)^2)}}))}
 \exp\lf\{-\frac{1}{cr}\r\}\lf(1+\frac{\sqrt{r(t^2+d(x,y)^2)}}{\rho(x)}\r)^{-N}\frac{\d r}{r} \\
&\ \le C\frac{t}{t+d(x,y)}\frac{1}{\mu(B(x,t+d(x,y)))}\lf(1+\frac{t+d(x,y)}{\rho(x)}\r)^{-N}\int_0^1{r^{-\frac{Q+N+1}{2}}}\exp\lf\{-\frac{1}{cr}\r\}\frac{\d r}{r} \\
&\ \le C\frac{t}{t+d(x,y)} \frac{1}{\mu(B(x, t+d(x,y)))}\lf(1+\frac{t+d(x,y)}{\rho(x)}\r)^{-N}.
\end{align*}
For the global part, there holds that
\begin{align*}
&\int^\fz_{t^2+d(x,y)^2}\frac{t}{s^{1/2}}\frac{1}{\mu(B(x,\sqrt{s}))}\exp\lf\{-\frac{t^2+d(x,y)^2}{cs}\r\}\lf(1+\frac{\sqrt{s}}{\rho(x)}\r)^{-N}\frac{\d s}{s} \\
&\ \le \frac{1}{\mu(B(x,\sqrt{t^2+d(x,y)^2}))}\lf(1+\frac{\sqrt{t^2+d(x,y)^2}}{\rho(x)}\r)^{-N}\int^\fz_{t^2+d(x,y)^2}\frac{t}{s^{1/2}}\frac{\d s}{s} \\
&\ \le C\frac{t}{t+d(x,y)} \frac{1}{\mu(B(x, t+d(x,y)))}\lf(1+\frac{{t+d(x,y)}}{\rho(x)}\r)^{-N}.
\end{align*}
Combing the three inequalities above leads to the desired result.

The case $m>0$ follows from the case $m=0$ and  Bochner's subordination formula that
\begin{align*}
|t^m\partial^m_tp^v_t(x,y)|
&=\left|\frac{1}{2\sqrt{\pi}}\int_0^\fz t^m\partial^m_t\left(\frac{t}{s^{1/2}}
\exp\lf\{-\frac{t^2}{4s}\r\}\right){h^v_s(x,y)}\frac{\d s}{s}\right|\\
&\le C_m\int_0^\fz \frac{t}{s^{1/2}}
\exp\lf\{-\frac{t^2}{cs}\r\}{h^v_s(x,y)}\frac{\d s}{s}\\
&\le  C\frac{t}{t+d(x,y)}\frac{1}{\mu(B(x, t+d(x,y)))}\lf(1+\frac{t+d(x,y)}{\rho(x)}\r)^{-N}.
\end{align*}

(ii) By Bochner's subordination formula \eqref{subordination} and Proposition \ref{est-heat-kernel} (ii), we deduce that
\begin{align*}
|t\partial_t{e^{-t\sqrt{\LV}}}(1)(x)|
&=\left|{\frac{t}{\sqrt{\pi}}}\int_0^\fz \sqrt u \frac{2t}{4u} e^{-u}\LV e^{-\frac{t^2}{4u}\LV}(1)(x)\frac{\d u}{u}\right|\\
&=\left|\frac{1}{\sqrt{\pi}}\int_0^\fz \frac{t}{\sqrt s} \exp\lf\{-\frac{t^2}{4s}\r\}s\LV e^{-s\LV}(1)(x)\frac{\d s}{s}\right|\\
&\le C\int_0^\fz\frac{t}{\sqrt s}\exp\lf\{-\frac{t^2}{4s}\r\}\lf(\frac{\sqrt s}{\rho(x)}\r)^{\dz}\lf(1+\frac{\sqrt s}{\rho(x)}\r)^{-N}\frac{\d s}{s},
\end{align*}
where $\delta\in(0,\min\{1,2-Q/q\})$.
Set $F:=\frac{t}{\sqrt s}\exp\lf\{-\frac{t^2}{4s}\r\}\lf(\frac{\sqrt s}{\rho(x)}\r)^{\dz}\lf(1+\frac{\sqrt s}{\rho(x)}\r)^{-N}$. Then we have
\begin{align*}
\int_{t^2}^\fz F\frac{\d s}{s}
&\le \int_{t^2}^\fz{ \frac{t}{\sqrt s}\lf(\frac{\sqrt s}{\rho(x)}\r)^{\dz}}\lf(1+\frac{t}{\rho(x)}\r)^{-N}\frac{\d s}{s}\le C\lf(\frac{t}{\rho(x)}\r)^{\dz}
\lf(1+\frac{t}{\rho(x)}\r)^{-N}.
\end{align*}
If $\rho(x)\ge t$, then
\begin{align*}
\int_0^{t^2}F\frac{\d s}{s}
&\le C\int_0^{t^2}\frac{t}{\sqrt s}\lf(\frac{s}{t^2}\r)^{2}\lf(\frac{\sqrt s}{\rho(x)}\r)^{\dz}\frac{\d s}{s}\le C\lf(\frac{t}{\rho(x)}\r)^{\dz}\le C\lf(\frac{t}{\rho(x)}\r)^{\dz}{\lf(1+\frac{t}{\rho(x)}\r)^{-N}.}
\end{align*}
Otherwise $\rho(x)<t$, there holds
\begin{align*}
\int_0^{t^2}F\frac{\d s}{s}
&=\int_0^{\rho(x)^2}F\frac{\d s}{s}+\int_{\rho(x)^2}^{t^2}F\frac{\d s}{s}\\
&\le C\int_0^{\rho(x)^2}\frac{t}{\sqrt s}\lf(\frac{\sqrt{s}}{t}\r)^{N+1}\lf(\frac{\sqrt s}{\rho(x)}\r)^{\dz}\frac{\d s}{s}
 +C\int_{\rho(x)^2}^{t^2}\frac{t}{\sqrt s}\lf(\frac{\sqrt{s}}{t}\r)^{N+1}\lf(\frac{\sqrt s}{\rho(x)}\r)^{\dz}\lf(\frac{\sqrt s}{\rho(x)}\r)^{-N}\frac{\d s}{s}\\
&\le C\lf(\frac{t}{\rho(x)}\r)^{\dz}\lf(\frac{\rho(x)}{t}\r)^{N}\le C\lf(\frac{t}{\rho(x)}\r)^{\dz}\lf(1+\frac{t}{\rho(x)}\r)^{-N}.
\end{align*}
Therefore we obtain
$$|t\partial_te^{-t\sqrt{\LV}}(1)(x)|\le C\lf(\frac{t}{\rho(x)}\r)^{\dz}\lf(1+\frac{t}{\rho(x)}\r)^{-N}.$$
The case $m\ge 2$ then follows similarly as
\begin{align*}
|t^m\partial^m_te^{-t\sqrt{\LV}}(1)(x)|
&=\left|\frac{t^m}{\sqrt{\pi}}\int_0^\fz \frac{1}{\sqrt s} \partial_t^{m-1}\left(\exp\lf\{-\frac{t^2}{4s}\r\}\right)s\LV e^{-s\LV}(1)(x)\frac{\d s}{s}\right|\\
&\le C\int_0^\fz\frac{t}{\sqrt s}{\exp\lf\{-\frac{t^2}{cs}\r\}}|s\LV e^{-s\LV}(1)(x)|\frac{\d s}{s}\\
&\le  C\lf(\frac{t}{\rho(x)}\r)^{\dz}\lf(1+\frac{t}{\rho(x)}\r)^{-N}.
\end{align*}

Propositions \ref{est-poisson-kernel} (iii) and (iv) follow
by using the Bochner subordination formula and Proposition \ref{est-heat-kernel} (iii), (iv).
We omit the proofs here and refer the reader to \cite[Proposition 3.6]{MSTZ2012} for an example.
\end{proof}
%
%

\subsection{Hardy space associated to Schr\"{o}dinger operator}
\hskip\parindent
Let us first recall the {definition} of Hardy  space for the Schr\"odinger operator; see \cite{BDL18,YZ2011}, and \cite{DY2005-1,Dz2005,DzZi1999,DGMTZ2005,DzZi2002,LinLiu2011}
for more related results.

An $L^2(X)$ function $a$ is called $(1,2)_\rho$-atom, if it satisfies

(i) $\supp a\subset B(x,r)$;

(ii) $\|a\|_{L^2(X)}\le \mu(B(x,r))^{-1/2}$;

(iii) $\int_X a\d\mu=0$  if $r<\rho(x)$.

A function $f$ is in the Hardy space $H^{1}_\rho(X)$, if $f$ has the representation $\sum_{j}\lambda_ja_j$, where $a_j$ are $(1,2)_\rho$-atoms, and $\sum_{j}|\lambda_j|<\infty$.
The  $H^1_\rho(X)$ norm is defined as
$$\|f\|_{H^1_\rho}:=\inf\left\{\sum_{j}|\lambda_j|:\,f=\sum_j\lambda_ja_j\right\}.$$

We shall need a characterization of the $H^1_\rho(X)$ via the following area function
based on the Poisson kernel.
For $f\in L^2(X)$, we define $\mathcal{S}_\PV(f)$ by
$$\mathcal{S}_\PV(f)(x)=\left(\int_0^\fz\fint_{B(x,{t})}
|t\sqrt{\LV}\PV_t(f)(z)|^2{\d \mu(z)}\frac{\d t}{t}\right)^{1/2}.$$
We define the $H^1_\LV(X)$ space norm as
 $\|f\|_{H^1_\LV}:=\left\|\mathcal{S}_\PV(f)\right\|_{L^1}.$
The Hardy space $H^1_\LV(X)$ is defined as the completion of all $L^2(X)$ functions
having finite $H^1_\LV(X)$ norm.
\begin{theorem}\label{hardy-equiv}
 Let $(X,d,\mu,\mathscr{E})$ be a complete Dirichlet metric space satisfying $(D)$ and $(P_2)$.
Suppose $V\in RH_q(X)\cap A_\infty(X)$ for some $q>\max\{1,Q/2\}$. Then the Hardy spaces
$H^1_\LV(X)$  and $H^1_\rho(X)$ coincide with equivalent norms.
Moreover, the dual space of $H^1_\LV(X)$ or $H^1_\rho(X)$ is the space $\BMO_\LV(X)$.
\end{theorem}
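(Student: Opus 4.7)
The plan is to establish the coincidence $H^1_\LV(X) = H^1_\rho(X)$ first, and then the duality identification, both following the framework developed in \cite{HLMMY2011, YZ2011, BDL18} adapted to our Dirichlet-space setting; the Poisson kernel estimates of Proposition \ref{est-poisson-kernel} are the central analytic input.

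For the inclusion $H^1_\rho(X) \subseteq H^1_\LV(X)$, I would verify the uniform bound $\|\mathcal{S}_\PV(a)\|_{L^1} \le C$ for every $(1,2)_\rho$-atom $a$ supported in $B = B(x_B, r_B)$. Split $\int_X \mathcal{S}_\PV(a)\,\d\mu$ into the near piece over $2B$, which is controlled by Cauchy--Schwarz and the $L^2$-boundedness of $\mathcal{S}_\PV$ (itself a consequence of spectral calculus for $\LV$), and the far piece over $X\setminus 2B$, decomposed dyadically into the annuli $2^{k+1}B \setminus 2^k B$. On the far region, further split the $t$-integration at $t = r_B$. For $t\le r_B$ use cancellation $\int_X a\,\d\mu = 0$ when $r_B < \rho(x_B)$, together with the H\"older regularity of $t\sqrt{\LV}\,p^v_t$ from Proposition \ref{est-poisson-kernel} (iv), to extract a gain $(r_B/t)^\theta$; when $r_B\ge \rho(x_B)$ the missing cancellation is replaced by the polynomial decay $(1+t/\rho(x))^{-N}$ inherited from Proposition \ref{est-poisson-kernel} (ii). For $t > r_B$ the kernel size bound in Proposition \ref{est-poisson-kernel} (i) combined with the doubling condition $(D)$ yields geometric decay across the annuli.

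For the reverse inclusion $H^1_\LV(X) \subseteq H^1_\rho(X)$, apply the Calder\'on reproducing formula
$$f = c_0\int_0^\infty (t\sqrt{\LV})^2\PV_t(f)\,\frac{\d t}{t}$$
valid on $L^2(X)$, and invoke the tent-space atomic decomposition of Coifman--Meyer--Stein type in the space of homogeneous type $(X,d,\mu)$. A tent atom $A$ supported in a tent $\widehat B$ produces a ``molecule''
$$\pi_\PV(A)(x) = c_0\int_0^\infty\int_X t\sqrt{\LV}\,p^v_t(x,y)\,A(y,t)\,\d\mu(y)\,\frac{\d t}{t},$$
whose $L^2$-norm and localisation properties, again via Proposition \ref{est-poisson-kernel}, allow a further expansion into $(1,2)_\rho$-atoms with summable coefficients.

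For the duality, I would combine tent-space duality $(T^1)^* = T^\infty$ with the Carleson characterisation of $\BMO_\LV(X)$. For $g\in \BMO_\LV(X)$ the functional $f\mapsto \int_X fg\,\d\mu$ on finite atomic sums is bounded via
$$\left|\int_X fg\,\d\mu\right| \ls \int_0^\infty\int_X |t\sqrt{\LV}\PV_t f(x)|\,|t\sqrt{\LV}\PV_t g(x)|\,\d\mu(x)\,\frac{\d t}{t},$$
where the second factor has the Carleson property on scales $t < \rho(x)$ by the BMO control and decays polynomially for $t\ge \rho(x)$ by Proposition \ref{est-poisson-kernel} (ii); conversely, any bounded functional on $H^1_\LV(X)$ is represented by a $\BMO_\LV$ function by testing on atoms. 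The main obstacle is the careful bookkeeping of the $\rho$-dependent decay factors throughout: in the atom-to-square-function estimate they substitute for cancellation at scales $r_B\ge \rho(x_B)$, while in the BMO-to-Carleson step they are precisely what allows the no-cancellation supremum in $\|g\|_{\BMO_\LV}$ to control the Carleson integral on scales $t\ge \rho$.
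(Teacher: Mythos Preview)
Your sketch is correct in substance, but the paper handles this theorem quite differently: it does not prove it from scratch at all. The paper's proof is a two-line citation, observing that the heat kernel estimates of Proposition~\ref{est-heat-kernel} verify the hypotheses needed to invoke \cite[Theorem~7.3]{HLMMY2011}, \cite[Theorem~1.3]{SY2018}, and \cite[Theorem~2.12]{BDL18} for the coincidence $H^1_\LV(X)=H^1_\rho(X)$, and \cite[Theorem~2.1]{YYZ2010-1} for the duality $(H^1_\rho)^\ast=\BMO_\LV$. In other words, the paper treats Theorem~\ref{hardy-equiv} as an assembly of known results, with Proposition~\ref{est-heat-kernel} as the only new ingredient.

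What you have written is essentially an outline of what those cited references actually do: uniform square-function bounds on atoms (with the $\rho$-decay substituting for cancellation at large scales), tent-space atomic decomposition for the reverse inclusion, and tent-space duality for the BMO identification. This is the right machinery and there is no mathematical gap. Two minor remarks: you lean on the Poisson kernel bounds (Proposition~\ref{est-poisson-kernel}) where the paper points to the heat kernel bounds (Proposition~\ref{est-heat-kernel}) --- a cosmetic difference, since the former are derived from the latter by subordination and either suffices as input to the cited theorems; and your duality sketch reproves the Carleson characterisation of $\BMO_\LV$, which in the paper's logic is not needed here (it is established separately in Section~5 as part of the main theorem) since the duality $(H^1_\rho)^\ast=\BMO_\LV$ is taken directly from \cite{YYZ2010-1} via the atomic description.
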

\begin{proof} By using the heat kernel estimates from Proposition \ref{est-heat-kernel},
the required result follows from a combination of
\cite[Theorem 7.3]{HLMMY2011}, \cite[Theorem 1.3]{SY2018}  and \cite[Theorem 2.12]{BDL18}.

The duality between $H^1_\rho(X)$ and $\BMO_\LV(X)$ was proved in \cite[Theorem 2.1]{YYZ2010-1}.
\end{proof}

As an application of the estimates of the Poisson kernel, we obtain that
\begin{proposition}\label{poisson-hardy}
 Let $(X,d,\mu,\mathscr{E})$ be a complete Dirichlet metric space satisfying $(D)$ and $(P_2)$.
Suppose $V\in RH_q(X)\cap A_\infty(X)$ for some $q>\max\{1,Q/2\}$.
For any $s>0$, $k\in\cn$ and $y\in X$, $\partial^k_s p^v_s(\cdot,y)\in H^1_\rho(X)$.
\end{proposition}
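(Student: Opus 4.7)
The plan is to build an explicit atomic decomposition of $f:=\partial^k_s p^v_s(\cdot,y)$ in $H^1_\rho(X)$, using dyadic annuli around $y$ starting at scale $R:=\max\{s,\rho(y)\}$, so that every piece is supported in a ball of radius at least $\rho(y)$ and the moment condition (iii) in the definition of $(1,2)_\rho$-atoms is automatically vacuous.

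First I would combine Proposition \ref{est-poisson-kernel}(i) with Lemma \ref{crit-func-pro-1} (to transfer the critical radius from the variable point $x$ to the base point $y$) to obtain the pointwise bound, valid for every $N>0$,
\[
|\partial^k_s p^v_s(x,y)|\le \frac{C_N}{s^{k-1}(s+d(x,y))\,\mu(B(y,s+d(x,y)))}\lf(1+\frac{s+d(x,y)}{\rho(y)}\r)^{-N}.
\]
Setting $A_0:=B(y,R)$ and $A_j:=B(y,2^jR)\setminus B(y,2^{j-1}R)$ for $j\ge1$, integrating this bound over $A_j$ and using $(D)$ yields
\[
\|f\chi_{A_j}\|_{L^2(X)}\le \frac{C_N}{s^{k-1}\cdot 2^jR}\,\mu(B(y,2^jR))^{-1/2}\lf(1+\frac{2^jR}{\rho(y)}\r)^{-N}, \quad j\ge1.
\]
Taking $\lz_j:=C_Ns^{-(k-1)}(2^jR)^{-1}(1+2^jR/\rho(y))^{-N}$ and $a_j:=\lz_j^{-1}f\chi_{A_j}$, each $a_j$ is supported in $B(y,2^jR)$ with $\|a_j\|_{L^2}\le\mu(B(y,2^jR))^{-1/2}$; since $2^jR\ge\rho(y)$, no moment vanishing is required, so every $a_j$ is a $(1,2)_\rho$-atom and $\sum_{j\ge1}|\lz_j|<\infty$.

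For the central piece $f\chi_{A_0}=f\chi_{B(y,R)}$, when $R=\rho(y)>s$ I would subdivide $B(y,\rho(y))$ into sub-annuli of radii doubling from $s$ up to $\rho(y)$ and sum the same $L^2$ bounds to obtain $\|f\chi_{B(y,R)}\|_{L^2}\le Cs^{-k}\mu(B(y,s))^{-1/2}$, which via $(D)$ is bounded by $Cs^{-k}(\rho(y)/s)^{Q/2}\mu(B(y,R))^{-1/2}$. When $R=s$ the estimate simplifies to $\|f\chi_{B(y,s)}\|_{L^2}\le Cs^{-k}\mu(B(y,s))^{-1/2}$. Either way, since the radius of $B(y,R)$ equals $R\ge\rho(y)$, no moment vanishing is needed, and $f\chi_{B(y,R)}$ is a finite multiple of a $(1,2)_\rho$-atom. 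Combining the central atom with the tail atoms produces an atomic decomposition of $f$ with summable coefficients, so $f\in H^1_\rho(X)$.

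The main delicate point is the case $s<\rho(y)$: a naive decomposition at scale $s$ would force atoms supported in balls $B(y,2^js)$ of sub-$\rho(y)$ radius, which would then be required to have zero mean. However, by Proposition \ref{est-poisson-kernel}(ii), $\int_X\partial^k_s p^v_s(\cdot,y)\,\d\mu=\partial^k_s e^{-s\sqrt{\LV}}(1)(y)$ is comparable to $s^{-k}(s/\rho(y))^{\dz}$ and does not vanish in general. Collapsing all dyadic scales below $\rho(y)$ into a single central atom supported in $B(y,\rho(y))$ bypasses this obstruction; the price is only a $(\rho(y)/s)^{Q/2}$ factor in the $H^1_\rho$ norm coming from doubling, which is harmless because the proposition asserts only membership in $H^1_\rho(X)$, not any uniform bound in $s$ or $y$.
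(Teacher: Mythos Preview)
Your proof is correct but follows a genuinely different route from the paper. The paper does not build an atomic decomposition at all; instead it invokes the area-function characterization of $H^1_\LV(X)=H^1_\rho(X)$ from Theorem~\ref{hardy-equiv}, observes the semigroup identity $t\sqrt{\LV}\PV_t(\partial^k_s p^v_s(\cdot,y))=-t\,\partial_t^{k+1}p^v_{t+s}(\cdot,y)$, and then uses the Poisson upper bound of Proposition~\ref{est-poisson-kernel}(i) to show that $\mathcal{S}_\PV(\partial^k_s p^v_s(\cdot,y))(x)\le Cs^{1/2}s^{-k}(s+d(x,y))^{-1/2}\mu(B(y,s+d(x,y)))^{-1}$, which is in $L^1(X)$ by a short annular sum. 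Your direct atomic construction is more elementary in that it sidesteps the square-function machinery of Theorem~\ref{hardy-equiv} entirely, and your observation that starting the decomposition at scale $R=\max\{s,\rho(y)\}$ makes every moment condition vacuous is exactly the right way to avoid the obstacle you identify in the last paragraph. The paper's approach, on the other hand, yields a cleaner explicit bound $\|\partial^k_s p^v_s(\cdot,y)\|_{H^1_\rho}\le Cs^{-k}$ uniform in $\rho(y)$, whereas your estimate carries the extra factor $(\rho(y)/s)^{Q/2}$ when $s<\rho(y)$; as you note, this is harmless for the proposition as stated.
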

\begin{proof}
Note that {$-\sqrt{\LV}\PV_t(\partial^k_s p^v_s(\cdot,y))(x)=\partial_t^{k+1} p^v_{t+s}(x,y)$} is the kernel of
$\partial^{k+1}\PV_{s+t}$, and therefore by Proposition \ref{est-poisson-kernel}, we conclude that
\begin{align*}
 \mathcal{S}_\PV( \partial^k_s p^v_s(\cdot,y))(x)&=\left(\int_0^\fz\fint_{B(x,{t})}|t\sqrt{\LV}\PV_t(\partial^k_s p^v_s(\cdot,y))(z)|^2{\d \mu(z)}\frac{\d t}{t}\right)^{1/2}\\
 &\le \left(\int_0^\fz\fint_{B(x,{t})}|t\partial_t^{k+1} p^v_{t+s}(z,y)|^2{\d \mu(z)}\frac{\d t}{t}\right)^{1/2}\\
 &\le C\left(\int_0^\fz\fint_{B(x,{t})}\left(\frac{{t(t+s)}}{(t+s)^{k+1}(t+s+d(y,z))\mu(B(y,t+s+d(y,z)))}\right)^2{\d \mu(z)}\frac{\d t}{t}\right)^{1/2}\\
 &\le C\left(\int_0^\fz\fint_{B(x,{t})}{\left(\frac{t}{(t+s)^{k+1/2}(s+d(x,y))^{1/2}\mu(B(y,s+d(x,y)))}\right)^2}{\d \mu(z)}\frac{\d t}{t}\right)^{1/2}\\
 &\le C\left(\int_0^\fz{\left(\frac{t}{(t+s)^{k+1/2}(s+d(x,y))^{1/2}\mu(B(y,s+d(x,y)))}\right)^2}\frac{\d t}{t}\right)^{1/2}\\
 &\le {\frac{Cs^{1/2}}{s^{k}(s+d(y,x))^{1/2}\mu(B(y,s+d(y,x)))},}
\end{align*}
which implies for any $y\in X$,
$\mathcal{S}_\PV( \partial^k_s p^v_s(\cdot,y))(x)\in L^1(X)$.
Then by using Theorem \ref{hardy-equiv}, we conclude that  $\partial^k_s p^v_s(\cdot,y)\in H^1_\rho(X).$
\end{proof}

\section{From HMO to BMO}\label{s4}
\hskip\parindent  In this section, we show that for any $u\in \HMO_\LV(X\times \rr_+)$, there is
a $\BMO_\LV(X)$ function $f$ such that $u(x,t)=\PV_tf(x)$ with desired norm control.

We endow the product
space $X\times \rr_+$ (also $X\times \rr$) the product measure $\d\mu\d s$. 
Since $\d\mu$ is doubling,  $\d\mu\d s$ is also a doubling measure.
Moreover, as the Poincar\'e inequality holds on $(X,d,\mu)$, i.e.,
\begin{align*}
\lf(\fint_{B(x,r)}|f-f_B|^2\d\mu \r)^{1/2}\le C_Pr\lf(\fint_{B(x,r)}|{\nabla_xf}|^2\d\mu\r)^{1/2}
\end{align*}
for all $f\in W^{1,2}(B)$, we also have a Poincar\'e inequality for the product space as,
for any $g\in W^{1,2}(B(x,r)\times (s,s+r))$, it holds
\begin{align*}
\lf(\fint_{s}^{s+r}\fint_{B(x,r)}|g-g_{B(x,r)\times(s,s+r)}|^2\d\mu\d t \r)^{1/2}\le C_Pr\lf(\fint_{s}^{s+r}\fint_{B(x,r)}|\nabla g|^2\d\mu\d t\r)^{1/2},
\end{align*}
where $\nabla=(\nabla_x,\partial_t)$.

For $0\le V \in RH_q(X)\cap A_\infty(X)$ with $q>(Q+1)/2$, by defining $V(x,t):=V(x)$ for all $t\in\rr$,
we see that $V(x,t)\in RH_q(X\times\rr)\cap A_\infty(X\times \rr)$ with $q> (Q+1)/2$;
see the definitions in Section 2.
By Proposition \ref{MVP-Schrodinger}, $\LV_+$-harmonic functions are locally H\"older continuous on
$X\times \rr_+$, as $V \in RH_q(X\times\rr)\cap A_\infty(X\times\rr)$ with $q>(Q+1)/2$.

\begin{lemma}\label{lem3.1}
Let $(X,d,\mu,\mathscr{E})$ be a complete Dirichlet metric space satisfying $(D)$
and admitting an $L^2$-Poincar\'{e} inequality.
 Suppose $0\le V\in RH_{q}(X)\cap A_\infty(X)$ with $q>(Q+1)/2$.
If $u\in \HMO_\LV(X\times \rr_+)$, then    there exists a constant $C>0$ such that
  \begin{align*}
  |t\partial_t u(x,t)|\le C\|u\|_{\HMO_\LV}, \quad\forall\,x\in X\,\&\,t>0.
  \end{align*}
\end{lemma}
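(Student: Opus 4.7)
The idea is to bound $\partial_t u$ pointwise by an $L^2$ average on a nearby parabolic ball, and then compare this average with the $\HMO_\LV$ norm. To do this I will view $u$ as a solution of the elliptic equation $(\LV - \partial_t^2)u = 0$ on the product space $X \times \rr$ and invoke the Moser-type $L^\infty$ bound of Proposition \ref{MVP-Schrodinger}. Extending $V$ trivially by $V(x,s):=V(x)$, one checks directly that $V \in RH_q(X\times\rr)\cap A_\infty(X\times\rr)$, and the doubling exponent of $X\times\rr$ is $Q+1$, so the assumption $q\ge (Q+1)/2$ matches the hypothesis of Proposition \ref{MVP-Schrodinger}. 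The product Poincar\'e inequality recorded just after the definition of $\HMO_\LV$ guarantees $(P_2)$ on $X\times\rr$.

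\textbf{Step 1: Reduce $\partial_t u$ to an $\LV_+$-harmonic function.} Because the coefficients of the equation are independent of $t$, the time translates $u(\cdot,\cdot+h)$ are also $\LV_+$-harmonic in $X\times(-h,\infty)$, and hence so are the difference quotients $u_h(x,s):=h^{-1}[u(x,s+h)-u(x,s)]$ for each small $h>0$. Fix $(x,t)\in X\times\rr_+$ and work on the product ball $B_\ast:=B((x,t),t/4)\subset X\times (t/2,3t/2)$. For $|h|<t/8$, Proposition \ref{MVP-Schrodinger} together with Cauchy--Schwarz yields
\[
|u_h(x,t)|^2\le \|u_h\|_{L^\infty(B_\ast)}^2\le C\fint_{2B_\ast}|u_h(y,s)|^2\,\d\mu(y)\,\d s.
\]
Since $u\in W^{1,2}_{\loc}(X\times\rr_+)$ one has $u_h\to \partial_s u$ in $L^2_\loc$ as $h\to 0^+$; the H\"older continuity in Proposition \ref{MVP-Schrodinger} applied to $u_h-u_{h'}$ gives uniform continuity, so passing to the limit yields the pointwise Moser-type bound
\[
|\partial_t u(x,t)|^2\le C\fint_{B((x,t),t/2)}|\partial_s u(y,s)|^2\,\d\mu(y)\,\d s.
\]

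\textbf{Step 2: Match this with the $\HMO_\LV$ norm.} On the integration region $s\in(t/2,3t/2)$ one has $s\simeq t$, so $|\partial_s u|^2\le 4|s\partial_s u|^2/t^2$ and $\d s\le 3\,t\,\d s/s$. Hence
\[
|\partial_t u(x,t)|^2 \le \frac{C}{t^2\,\mu(B(x,t/2))}\int_{t/2}^{3t/2}\!\int_{B(x,t/2)}|s\nabla u(y,s)|^2\,\d\mu(y)\frac{\d s}{s}.
\]
Enlarging $B(x,t/2)$ to $B(x,3t/2)$ and applying the definition of $\|u\|_{\HMO_\LV}$ with the ball $B(x,3t/2)$ (so $r_B=3t/2$ covers the full $s$-interval), the doubling condition $(D)$ gives
\[
|\partial_t u(x,t)|^2\le \frac{C\,\mu(B(x,3t/2))}{t^2\,\mu(B(x,t/2))}\,\|u\|_{\HMO_\LV}^2\le \frac{C}{t^2}\,\|u\|_{\HMO_\LV}^2,
\]
which is the claimed pointwise estimate after multiplying by $t^2$.

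\textbf{Main obstacle.} The only non-routine point is the justification of the interior Moser-type estimate for $\partial_t u$; this is where the reduction via difference quotients is essential, together with the observation that the product space $X\times\rr$ with the extended potential $V$ falls within the scope of Proposition \ref{MVP-Schrodinger} precisely because the hypothesis $q\ge (Q+1)/2$ is calibrated to the product dimension. Everything else is a scaling check against the $\HMO_\LV$ norm.
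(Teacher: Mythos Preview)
Your proof is correct and follows essentially the same route as the paper: both arguments use difference quotients $u_h$ to produce an $\LV_+$-harmonic function on the product space, invoke the mean-value bound of Proposition~\ref{MVP-Schrodinger} (with $V$ extended trivially so that $q>(Q+1)/2$ matches the product dimension), and then compare the resulting $L^2$ average of $\partial_s u$ with the $\HMO_\LV$ norm. The only cosmetic difference is that the paper bounds $\int|u_h|^2\,\d s$ by $\int|\partial_r u|^2\,\d r$ via H\"older before letting $h\to 0$, whereas you pass to the limit $u_h\to\partial_s u$ in $L^2_\loc$ inside the Moser bound; both are equivalent.
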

\begin{proof}
Let $\epsilon>0$. Given $x\in X$ and $t>\epsilon$, for any $-\epsilon<h<\epsilon$, we set
\begin{align*}
u_h(x,t)=\frac{u(x,t+h)-u(x,t)}{h}.
\end{align*}
For any Lipschitz function $\phi$ with compact support in $X\times (\epsilon,\infty)$, there holds that
\begin{align*}
&\int_\epsilon^\fz\int_X \partial_t u_h(x,t)\partial_t \phi (x,t) \d \mu(x)\d t+\int_\epsilon^\fz\int_X\langle \nabla_x u_h(x,t), \nabla_x \phi(x,t)\rangle \d \mu(x)\d t \\
&\ =\int_\epsilon^\fz\int_X \partial_t \frac{u(x,t+h)-u(x,t)}{h}\partial_t \phi (x,t) \d \mu(x)\d t \\
&\ \ +\int_\epsilon^\fz\int_X\langle \nabla_x \frac{u(x,t+h)-u(x,t)}{h},\nabla_x \phi(x,t)\rangle \d \mu(x)\d t\\
&\ =\frac{1}{h}\int_{h+\epsilon}^\fz\int_X \partial_t u(x,t)\partial_t {\phi (x,t-h)} \d \mu(x)\d t-\frac{1}{h}\int_\epsilon^\fz\int_X \partial_t u(x,t)\partial_t \phi (x,t) \d \mu(x)\d t\\
&\ \ +\frac{1}{h}\int_{h+\epsilon}^\fz\int_X\langle \nabla_x u(x,t),\nabla_x {\phi (x,t-h)}\rangle \d \mu(x)\d t-\frac{1}{h}\int_\epsilon^\fz\int_X\langle \nabla_x u(x,t), \nabla_x \phi(x,t)\rangle \d \mu(x)\d t \\
&\ =-\frac{1}{h}\int_{h+\epsilon}^\fz\int_X V(x)u(x,t){\phi (x,t-h)} \d \mu(x)\d t+\frac{1}{h}\int_\epsilon^\fz\int_XV(x)u(x,t)\phi(x,t)\d \mu(x)\d t\\
&\ =-\frac 1h\int_{\epsilon}^\fz\int_X V(x)u(x,t+h){\phi(x,t)} \d \mu(x)\d t+\frac{1}{h}\int_\epsilon^\fz\int_XV(x)u(x,t){\phi(x,t)}\d \mu(x)\d t\\
&\ =-\int_{\epsilon}^\fz\int_X V(x)u_h(x,t)\phi(x,t)\d \mu(x)\d t.
\end{align*}
This implies $u_h(x,t)$ is an $\LV_+$-harmonic function on $X\times (\epsilon,\infty)$.
Then we conclude by the mean value property (Proposition \ref{MVP-Schrodinger}) that
for any $t>2\epsilon$,
\begin{align}\label{f1}
 |u_h(x,t)|
\le C\lf(\fint_{B(x,t/2)}\fint^{3t/2}_{t/2}|u_h(y,s)|^2 {\d s}\d\mu(y)\r)^{1/2}.
\end{align}

Without loss of generality, we may assume $h>0$.
For a.e. $y\in B(x,t/2)$, by applying the H\"older inequality we obtain
\begin{align*}
\int^{3t/2}_{t/2}|u_h(y,s)|^2{\d s}
&=\int_{t/2}^{3t/2}\left(\frac{1}{h}\int_{s}^{s+h}\partial_r u(x,r)\d r\r)^2\d s\\
&\le\int_{t/2}^{3t/2}\frac{1}{h}\int_{s}^{s+h}|\partial_r u(x,r)|^2\d r\d s \\
&\le{\int_{t/2}^{3t/2+h}\frac{1}{h}\int_{t/2}^{3t/2}\mathbbm{1}_{\{s:\,s\in (r-h,r)\}}(s) \d s|\partial_r u(x,r)|^2\d r}   \\
&\le C\int_{t/2}^{3t/2+\epsilon}|\partial_r u(x,r)|^2\d r.
\end{align*}
Therefore, substituting this estimate into the RHS of \eqref{f1} yields for any $t>3\epsilon$ that
 \begin{align*}
 |u_h(x,t)|
&\le C\lf(\frac 1t\fint_{B(x,{t}/2)}\int_{{t}/2}^{3t/2+\epsilon}|\partial_su(y,s)|^2{\d s}\d \mu(y)\r)^{1/2} \\
&\le C\lf(\frac 1{t^2}{\fint_{B(x,2t)}}\int_{0}^{2t}|s\partial_su(y,s)|^2\frac{{\d s}}{s}\d \mu(y)\r)^{1/2} \\
& \le \frac{C}{t}\|u\|_{\HMO_\LV}.
\end{align*}
This implies for $t>3\epsilon$ that
$$|t\partial_tu(x,t)|\le {C}\|u\|_{\HMO_\LV}.$$
Letting $\epsilon\to 0$ shows the above
estimate holds for all $t>0$.
\end{proof}

\begin{proposition}\label{est-poisson-hmo}
Let $(X,d,\mu,\mathscr{E})$ be a complete Dirichlet metric space satisfying $(D)$
and admitting an $L^2$-Poincar\'{e} inequality.
 Suppose $0\le V\in RH_{q}(X)\cap A_\infty(X)$ with $q>(Q+1)/2$.
If $u\in \HMO_\LV(X\times \rr_+)$, then  there exists a constant $C>0$ such that for any $t,\epsilon>0$   and any $x\in X$,
$$\int_X\frac{|u(y,\epsilon)|^2}{(t+d(x,y))\mu(B(x,t+d(x,y)))}\d\mu(y)
<{\frac{C}{t}}\left(\|u(\cdot,\epsilon)\|^2_{L^\fz(B(x,2t))}+ \left(1+{\lf|\log\frac{t}{\epsilon}\r|}\right)^2\|u\|^2_{\HMO_\LV}\right).$$
\end{proposition}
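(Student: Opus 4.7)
The plan is to split $X=B(x,2t)\cup(X\setminus B(x,2t))$ and further decompose the far piece dyadically into annuli $A_k:=B(x,2^{k+1}t)\setminus B(x,2^kt)$ for $k\ge1$. On $B(x,2t)$ the weight is comparable to $1/[t\mu(B(x,t))]$, so by $(D)$ one gets directly
\begin{align*}
\int_{B(x,2t)}\frac{|u(y,\epsilon)|^2}{(t+d(x,y))\mu(B(x,t+d(x,y)))}\d\mu(y)\le \frac{C}{t}\|u(\cdot,\epsilon)\|^2_{L^\fz(B(x,2t))},
\end{align*}
which is the first piece on the RHS. On $A_k$ the weight is comparable to $1/[2^kt\,\mu(B(x,2^kt))]$, so the task reduces to bounding $\int_{A_k}|u(y,\epsilon)|^2\d\mu$ by $\mu(B(x,2^kt))$ times an acceptable combination of $\|u\|_{\HMO_{\LV}}^2$, $|\log(t/\epsilon)|^2$ and a quantity to be absorbed at the end.

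For $y\in A_k$ I would first invoke Lemma \ref{lem3.1} to bridge from height $\epsilon$ to the natural height $r_k:=2^kt$, giving pointwise $|u(y,\epsilon)|\le|u(y,r_k)|+C\|u\|_{\HMO_{\LV}}(k+|\log(t/\epsilon)|)$. Next, since the extension $V(x,t):=V(x)$ is still an $RH_q\cap A_\fz$ weight on $X\times\rr$ with $q>(Q+1)/2$, Proposition \ref{MVP-Schrodinger} applies on product balls in $X\times\rr_+$; together with the sub-harmonicity of $u^2$ (which follows from $(\mathcal{L}+V-\partial_t^2)u^2=-2|\nabla u|^2-Vu^2\le0$) this yields
\begin{align*}
|u(y,r_k)|^2\le C\fint_{B((y,r_k),r_k/2)}|u(z,s)|^2\d\mu(z)\d s,
\end{align*}
the product ball having measure comparable to $\mu(B(y,r_k/2))\cdot r_k$. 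Integrating over $y\in A_k$ and using Fubini with $(D)$, I reduce $\int_{A_k}|u(y,r_k)|^2\d\mu$ to $C/r_k$ times an integral of $|u|^2$ over the Carleson-type box $B_k\times J_k$, with $B_k:=B(x,3r_k)$ and $J_k:=(r_k/2,3r_k/2)$.

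To treat the Carleson-box average I would write $|u(z,s)|^2\le 2|u(z,s)-u_{B_k}(s)|^2+2|u_{B_k}(s)|^2$ with $u_{B_k}(s):=\fint_{B_k}u(\cdot,s)\d\mu$. The oscillation piece is handled by the $L^2$-Poincar\'e inequality on the product space (stated in the paper just before this section) combined with the Carleson condition defining $\|u\|_{\HMO_{\LV}}$ on $B_k$, contributing at most $Cr_k\mu(B_k)\|u\|_{\HMO_{\LV}}^2$. For the spatial mean $u_{B_k}(s)$ with $s\in J_k$, Lemma \ref{lem3.1} (now after averaging in space) reduces $s$ to $\epsilon$:
\begin{align*}
|u_{B_k}(s)|^2\le C\|u\|_{\HMO_{\LV}}^2(k+|\log(t/\epsilon)|)^2+C\fint_{B_k}|u(\cdot,\epsilon)|^2\d\mu,
\end{align*}
the last term by Jensen. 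Summing over $k\ge1$ with the weight $(2^kt\mu(B(x,2^kt)))^{-1}$, a geometric series in $k$ produces the target $\frac{C}{t}(1+|\log(t/\epsilon)|)^2\|u\|_{\HMO_{\LV}}^2$ from the pure $\HMO$ pieces.

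The main obstacle is the residual contribution $\sum_{k\ge1}\frac{C}{2^kt\mu(B(x,2^kt))}\int_{B_k}|u(\cdot,\epsilon)|^2\d\mu$, which by a Fubini rearrangement is bounded by $C'\cdot I$ with $I$ the left-hand side of the claim. My plan to close the estimate is an absorption argument: a priori finiteness of $I$ is ensured by truncating in $d(x,y)$ and invoking local boundedness of $u(\cdot,\epsilon)$ from Proposition \ref{infty-schrodinger}; and then by sharpening constants (in particular, taking the sub-MVP radius and the annular ratio carefully relative to the doubling and Poincar\'e constants) the coefficient $C'$ is arranged strictly below $1$, so that $C'I$ can be absorbed into the LHS to yield the proposition.
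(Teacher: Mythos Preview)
Your approach tracks the paper's closely through the local/annular split, the use of Lemma \ref{lem3.1} to change heights, and the Poincar\'e/Carleson control of oscillations. The genuine gap is in the final absorption step. After your decompositions, the residual you must control is
\[
\sum_{k\ge 1}\frac{C_0}{2^k t\,\mu(B(x,2^k t))}\int_{B_k}|u(\cdot,\epsilon)|^2\,\d\mu,
\]
and by Fubini this is bounded by $C''\cdot I$ with $I$ the left-hand side. The constant $C''$ is a product of (i) the De Giorgi--Nash--Moser constant in the sub--mean-value inequality of Proposition \ref{MVP-Schrodinger}, (ii) the doubling constant from the Fubini overlap, and (iii) several factors of $2$ from triangle inequalities. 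None of these can be driven below $1$ by shrinking the MVP radius or rescaling the annuli: the MVP constant is fixed by the Poincar\'e and doubling data, and the overlap constant is at least $C_d$ because $B_k\supset A_k\cup A_{k-1}$. So the claim that ``the coefficient $C'$ is arranged strictly below $1$'' does not hold, and the absorption fails. Even if one could arrange $C''<1$, your truncation would only give $I_R\le (\text{good})+C'' I_{3R}$, and without a priori control on the growth of $R\mapsto I_R$ the iteration does not close.

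The paper circumvents this entirely by a telescoping argument, which is the one idea you are missing. Instead of bounding $|u_{E_j}|$ (the space-time average over $E_j=B(x,2^jt)\times[2^{j-1}t,2^jt)$) by $\fint_{B_k}|u(\cdot,\epsilon)|^2$, they write
\[
|u_{E_j}|\le |u_{E_1}|+\sum_{i=2}^j|u_{E_i}-u_{E_{i-1}}|,
\]
control each difference $|u_{E_i}-u_{E_{i-1}}|\le C\|u\|_{\HMO_\LV}$ via the Poincar\'e inequality on a common enlargement $F_i\supset E_i\cup E_{i-1}$ together with the Carleson condition, and bound the anchor $|u_{E_1}|$ by $\|u(\cdot,\epsilon)\|_{L^\infty(B(x,2t))}+C(1+|\log(t/\epsilon)|)\|u\|_{\HMO_\LV}$ using Lemma \ref{lem3.1}. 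This yields $|u_{E_j}|\le \|u(\cdot,\epsilon)\|_{L^\infty(B(x,2t))}+Cj(1+|\log(t/\epsilon)|)\|u\|_{\HMO_\LV}$, and the factor $2^{-j}$ kills the polynomial growth in $j$ upon summation. The telescoping is what lets the $L^\infty$ norm on the \emph{fixed} ball $B(x,2t)$ absorb all the large-scale averages, with no self-referential term and hence no absorption needed.
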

\begin{proof}
By Proposition \ref{MVP-Schrodinger},
$u$ is locally H\"older continuous in $X\times\rr_+$ and locally bounded.
We spit the integral into $B(x,t)$ and $X\setminus B(x,t)$.
For the local part $B(x,t)$, one has
\begin{align*}
{\int_{B(x,t)}\frac{|u(y,\epsilon)|^2}{(t+d(x,y))\mu(B(x,t+d(x,y)))}\d \mu(y) \le\frac{1}{t}\|u(\cdot,\epsilon)\|^2_{L^\fz(B(x,t))}.}
\end{align*}

To estimate the global part $X\setminus B(x,t)$,  an annulus argument shows that
\begin{align*}
&\int_{X\setminus B(x,t)}\frac{|u(y,\epsilon)| ^2}{(t+d(x,y))\mu(B(x,t+d(x,y)))}{\d\mu(y)}\\
&\ \le C\sum_{j=1}^\fz (2^jt)^{-1}\fint_{2^{j-1}t}^{2^{j}t}\fint_{B(x,2^jt)\setminus  B(x,2^{j-1}t)}{|u(y,\epsilon)|^2}\d \mu(y)\d s \\
&\ \le C\sum_{j=1}^\fz (2^jt)^{-1} \fint_{E_j}{|u(y,\epsilon)-u(y,s)|^2}\d \mu(y)\d s\\
&\ \ +C\sum_{j=1}^\fz (2^jt)^{-1}\fint_{E_j}{|u(y,s)-u_{E_j}|^2}\d \mu(y)\d s+C\sum_{j=1}^\fz (2^jt)^{-1}{|u_{E_j}|^2}\\
&\ = C(I_1+I_2+I_3),
\end{align*}
where we denote by $E_j$ the cylinder  ${B(x,2^jt)\times[2^{j-1}t,2^{j}t)}$ for simplicity.

For the term $I_1$, there holds by Lemma \ref{lem3.1}  that
\begin{align*}
I_1
 &=\sum_{j=1}^\fz(2^jt)^{-1}\fint_{E_j}{\lf|{\int_\epsilon^s}\partial_ru(y,r)\d r\r|^2}\d \mu(y)\d s \\
&\le C\|u\|^2_{\HMO_\LV}\sum_{j=1}^\fz(2^jt)^{-1}\fint_{E_j}{\lf(\int_\epsilon^{2^{j}t}\frac{\d r}{r}\r)^2}\d \mu(y)\d s \\
&\le C \|u\|^2_{\HMO_\LV}\sum_{j=1}^\fz  \frac{\left|\log\frac{2^{j}t}{\epsilon} \right|^2}{2^{j}t} \le \frac C{t} \left(1+\left|\log\frac{t}{\epsilon}\right| \right)^2 \|u\|^2_{\HMO_\LV}.
\end{align*}

The estimate of term $I_2$ follows from the Poincar\'{e} inequality,
\begin{eqnarray*}
I_2&&\le \sum_{j=1}^\fz C (2^jt)^{-1} (2^{j}t)^2\fint_{E_j}{|\nabla u(y,s)|^2}\d \mu(y)\d s\\
&&\le \sum_{j=1}^\fz C (2^jt)^{-1}  \int_{0}^{2^jt}\fint_{B(x,2^jt)}{|s\nabla u(y,s)|^2}\d \mu(y)\frac{\d s}{s}\\
&&\le \frac{C}{t}\|u\|_{\HMO_\LV}^2.
\end{eqnarray*}
As $E_j= B(x,2^jt)\times[2^{j-1}t,2^{j}t)$, it holds
$E_j,E_{j+1}\subset  B(x,2^{j+1}t)\times [2^{j-1}t,2^{j+1}t)=:F_{j+1}$. Toward the term $I_3$, one writes
\begin{align*}
I_3
&\le \sum_{j=1}^\fz C(2^jt)^{-1} \lf(|u_{E_{1}}|+\sum_{i=2}^{j}|u_{E_{i}}-u_{E_{i-1}}|\r)^2\\
&\le \sum_{j=1}^\fz C(2^jt)^{-1} \lf(|(u-u(\cdot,\epsilon))_{E_{1}}|+\|u(\cdot,\epsilon)\|_{L^\infty(B(x,2t))}+\sum_{i=2}^{j}\left(|u_{E_{i}}-u_{F_i}|+|u_{F_i}-u_{E_{i-1}}|\right)\r)^2.
\end{align*}
It follows from the Poincar\'{e} inequality and the doubling property that
\begin{align*}
|u_{E_{i}}-u_{F_i}|+|u_{F_i}-u_{E_{i-1}}|
&\le C\lf(\fint_{2^{i-2}t}^{2^{i}t}\fint_{B(x,2^it)}|u(x,s)-u_{F_i}|^2\d \mu(x)\d s\r)^{1/2} \\
&\le C2^it\lf(\fint_{2^{i-2}t}^{2^{i}t}\fint_{B(x,2^it)}\frac{1}{s}|s\nabla u(x,s)|^2\d \mu(x)\frac{\d s}{s}\r)^{1/2}\le C \|u\|_{\HMO_\LV},
 \end{align*}
 and from Lemma \ref{lem3.1} that
\begin{align*}
|(u-u(\cdot,\epsilon))_{E_{1}}|&\le \fint_{B(x,t)\times [t,2t]}|u(y,s)-u(y,\epsilon)|\d\mu(y)\d s\\
& \le \fint_{B(x,t)\times [t,2t]}\lf(\int_\epsilon^s|\partial_r u(y,r)|\d r\r)\d\mu(y)\d s \le
C\left(1+\left|\log\frac{t}{\epsilon}\right| \right)\|u\|_{\HMO_\LV}.
\end{align*}
The above two estimates yield
\begin{align*}
I_3
&\le  \frac Ct\sum_{j=1}^\fz{2^{-j}}\lf(\|u(\cdot,\epsilon)\|_{L^\fz(B(x,2t))}+j\left(1+\left|\log\frac{t}{\epsilon}\right| \right)\|u\|_{\HMO_\LV} \r)^2\\
&\le  \frac Ct \lf(\|u(\cdot,\epsilon)\|^2_{L^\fz(B(x,2t))}+\left(1+\left|\log\frac{t}{\epsilon}\right| \right)^2 \|u\|^2_{\HMO_\LV}\r).
 \end{align*}
Combining the estimates of $I_1$, $I_2$ and $I_3$ leads to the required conclusion.
\end{proof}

 \begin{corollary}\label{lem3.2}
 Let $(X,d,\mu,\mathscr{E})$ be a complete Dirichlet metric space satisfying $(D)$
and admitting an $L^2$-Poincar\'{e} inequality.
 Suppose $0\le V\in RH_{q}(X)\cap A_\infty(X)$ with $q>(Q+1)/2$.
If $u\in \HMO_\LV(X\times \rr_+)$,  then  for  any $t,\epsilon>0$,
the Poisson extension $\PV_t(u(\cdot,\epsilon))$ is well defined.
\end{corollary}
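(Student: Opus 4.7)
The plan is to verify absolute convergence of the defining integral
\begin{align*}
\PV_t(u(\cdot,\epsilon))(x)=\int_X p^v_t(x,y)\,u(y,\epsilon)\,\d\mu(y)
\end{align*}
for each fixed $x\in X$ and each pair $t,\epsilon>0$; once this is established, $\PV_t(u(\cdot,\epsilon))$ is well defined pointwise. I would start by recording that, by Proposition~\ref{MVP-Schrodinger} applied on $X\times\rr$, the function $u(\cdot,\epsilon)$ is locally H\"older continuous and hence locally bounded, so in particular $\|u(\cdot,\epsilon)\|_{L^\fz(B(x,2t))}<\fz$.

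The key step is a Schur-type Cauchy--Schwarz split
\begin{align*}
\int_X |p^v_t(x,y)\,u(y,\epsilon)|\,\d\mu(y)\le\left(\int_X p^v_t(x,y)\,\d\mu(y)\right)^{1/2}\left(\int_X p^v_t(x,y)\,|u(y,\epsilon)|^2\,\d\mu(y)\right)^{1/2}.
\end{align*}
For the first factor, the pointwise upper bound of Proposition~\ref{est-poisson-kernel}(i) together with an annulus decomposition in $y$ (or, alternatively, the submarkovian bound $\PV_t 1\le 1$ coming from Bochner subordination and $\HV_t 1\le 1$) yields $\int_X p^v_t(x,y)\,\d\mu(y)\le C$ uniformly in $x$ and $t$. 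For the second factor, I would insert the pointwise estimate
\begin{align*}
p^v_t(x,y)\le \frac{Ct}{(t+d(x,y))\,\mu(B(x,t+d(x,y)))}
\end{align*}
from Proposition~\ref{est-poisson-kernel}(i) and then directly quote Proposition~\ref{est-poisson-hmo} to conclude that
\begin{align*}
\int_X p^v_t(x,y)\,|u(y,\epsilon)|^2\,\d\mu(y)\le C\left(\|u(\cdot,\epsilon)\|^2_{L^\fz(B(x,2t))}+\left(1+\left|\log\tfrac{t}{\epsilon}\right|\right)^2\|u\|^2_{\HMO_\LV}\right),
\end{align*}
which is finite by the hypothesis $u\in \HMO_\LV(X\times\rr_+)$ and the local boundedness recorded above.

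Combining the two bounds, the integral defining $\PV_t(u(\cdot,\epsilon))(x)$ converges absolutely for every $x$, and hence the Poisson extension is well defined. Conceptually this corollary is a direct consequence of Proposition~\ref{est-poisson-hmo}: the substantive work is the logarithmic-in-$t/\epsilon$ weighted $L^2$ bound obtained there, whose proof in turn rests on the gradient Carleson hypothesis and the pointwise bound on $t\partial_t u$ from Lemma~\ref{lem3.1}. I do not anticipate a real obstacle in the corollary itself beyond organising the Cauchy--Schwarz packaging and checking that the weight $\|u(\cdot,\epsilon)\|_{L^\fz(B(x,2t))}$ on the right-hand side is controlled (which is precisely what Proposition~\ref{MVP-Schrodinger} supplies).
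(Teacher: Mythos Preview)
Your proposal is correct and follows essentially the same argument as the paper: a Cauchy--Schwarz split with weight $p^v_t(x,\cdot)$, the first factor controlled by $\PV_t(1)\le C$, and the second by inserting the Poisson kernel upper bound and invoking Proposition~\ref{est-poisson-hmo}. The paper's proof is a compressed version of exactly this, writing the first factor as $|\PV_t(1)(x)|^{1/2}$ and arriving at the same final bound.
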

\begin{proof}
By Proposition \ref{est-poisson-kernel}, the Poisson kernel satisfies
  $$
|p^v_t(x,y)|\le C\frac{t}{t+d(x,y)}\frac{1}{\mu(B(x, t+d(x,y)))}\lf(1+\frac{t+d(x,y)}{\rho(x)}\r)^{-N}.
$$
By this, the H\"older inequality and Proposition \ref{est-poisson-hmo}, we conclude
\begin{eqnarray*}
|\PV_t(u(\cdot,s))(x)|&&\le C|\PV_t(1)(x)|^{1/2}\left(\int_X\frac{t|u(y,s)|^2}{(t+d(x,y))\mu(B(x,t+d(x,y)))}\d\mu(y)\right)^{1/2}\nonumber\\
&&{ \le C\left(\|u(\cdot,s)\|_{L^\fz(B(x,2t))}+\left(1+\left|{\log\frac{t}{s}}\right| \right)\|u\|_{\HMO_\LV}\right)<\infty,}
\end{eqnarray*}
as desired.
\end{proof}

\begin{proposition}\label{Liouville}
Let $(X,d,\mu,\mathscr{E})$ be a complete Dirichlet metric space satisfying $(D)$
and admitting an $L^2$-Poincar\'{e} inequality.
 Assume $0\le V\in RH_{q}(X)\cap A_\infty(X)$ with $q>\max\{1,Q/2\}$.
Suppose that $w$ is a solution to $\LV w=\mathcal{L}w+Vw=0$ on $X $.
If there exists $n>0$ such that
\begin{equation}\label{global-control}
\int_X \frac{|w(y)|^2}{(1+d(x,y))^n\mu(B(x,1+d(x,y)))}{\d\mu(y)}<\infty,
\end{equation}
then $w\equiv 0$.
\end{proposition}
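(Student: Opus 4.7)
My approach is to combine a Caccioppoli-type energy estimate with the Fefferman--Phong inequality (Proposition~\ref{FH}), and then to iterate on dyadic balls centered at a fixed $x_0\in X$ in order to convert the integrability hypothesis \eqref{global-control} into a vanishing statement for $w$.

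First, for each $R>0$ I would pick a Lipschitz cutoff $\eta=\eta_R$ supported in $B(x_0,2R)$, identically $1$ on $B(x_0,R)$, with $|\nabla_x\eta|\le C/R$. Since $w$ is locally in $W^{1,2}(X)$ and locally bounded by Proposition~\ref{infty-schrodinger}, the function $w\eta^2$ is a legitimate test function in the weak formulation of $\mathscr{L}w=\L w+Vw=0$. Expanding and absorbing via Cauchy--Schwarz gives the standard Caccioppoli-type inequality
\begin{align*}
\int_X\eta^2|\nabla_x w|^2\d\mu+\int_X Vw^2\eta^2\d\mu\le\frac{C}{R^2}\int_{B(x_0,2R)\setminus B(x_0,R)}|w|^2\d\mu.
\end{align*}
Applying the Fefferman--Phong inequality to $w\eta\in W^{1,2}(X)$ and combining with the above yields
\begin{align*}
\int_{B(x_0,R)}w^2\rho^{-2}\d\mu\le C\int_X(|\nabla_x(w\eta)|^2+V(w\eta)^2)\d\mu\le\frac{C}{R^2}\int_{B(x_0,2R)\setminus B(x_0,R)}|w|^2\d\mu.
\end{align*}

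The key step is to turn this into decay of $f(R):=\int_{B(x_0,R)}|w|^2\d\mu$. From Lemma~\ref{crit-func-pro-1}, for $x\in B(x_0,R)$ with $R\ge\rho(x_0)$ one has the uniform lower bound $\rho(x)^{-2}\ge c_0(x_0)\,R^{-2k_0/(k_0+1)}$. Plugging this in produces the recursion $f(R)\le C R^{-2/(k_0+1)}f(2R)$ for $R$ sufficiently large. Iterating $k$ times gives
\begin{align*}
f(R)\le C^k R^{-2k/(k_0+1)}\,2^{-k(k-1)/(k_0+1)}\,f(2^k R).
\end{align*}
Decomposing \eqref{global-control} over the dyadic annuli $A_j=B(x_0,2^{j+1})\setminus B(x_0,2^j)$ and using the doubling condition shows that $\int_{A_j}|w|^2\d\mu\le c_j\,2^{jn}\mu(B(x_0,2^j))$ with $\sum_j c_j<\infty$, which yields the polynomial-type bound $f(2^k R)\le C(2^k R)^{n+Q}\mu(B(x_0,R))$. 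Substituting into the iterated inequality and letting $k\to\infty$, the super-exponential decay $2^{-k^2/(k_0+1)}$ beats all the linear-in-$k$ factors $C^k\cdot 2^{k(n+Q)}$, forcing $f(R)=0$ for every large $R$, and hence $w\equiv 0$.

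The main obstacle is the quantitative iteration: one has to verify that the quadratic-in-$k$ negative exponent produced by Fefferman--Phong genuinely dominates every linear-in-$k$ contribution coming from the polynomial growth of $f$ and from the accumulating constant $C^k$. This closes because the per-iteration gain $R^{2/(k_0+1)}$ is a strictly positive power of $R$, so after enough doublings the product of gains outweighs all the growth; the argument depends crucially on $k_0<\infty$ in Lemma~\ref{crit-func-pro-1}, but that finiteness is exactly what is provided. Some care is also needed in extracting the precise polynomial-in-$k$ control of $f(2^k R)$ from \eqref{global-control}, which requires keeping track of both the volume factor $\mu(B(x_0,1+d(x_0,\cdot)))$ and the factor $(1+d(x_0,\cdot))^n$ through the doubling inequality.
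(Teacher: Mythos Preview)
Your argument is correct, and it takes a genuinely different route from the paper's. The paper represents $w\varphi$ (with $\varphi$ a cutoff on a large ball $5B$) via the heat semigroup as $-\int_0^\infty \partial_t e^{-t\LV}(w\varphi)\,\d t$, integrates by parts to reduce to boundary-type terms living on the annulus $5B\setminus 4B$, and then exploits the Gaussian upper bound \emph{with the extra decay factor} $\exp\{-\epsilon(1+\sqrt t/\rho(x))^{1/(k_0+1)}\}$ from $(GUB)$ to make the time integral converge and to gain a factor $\rho(x_B)^{n/2+1}/r_B\to 0$. Your approach instead combines Caccioppoli with Fefferman--Phong to obtain the dyadic recursion $f(R)\le C\,R^{-2/(k_0+1)}f(2R)$ and iterates; this is essentially the stationary analog of the paper's own proof of Lemma~\ref{Sh-2.1}, applied directly to $w$ rather than to the heat kernel. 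Your route is more elementary in that it avoids the heat-kernel machinery entirely (Propositions~\ref{est-heat-kernel} and \ref{est-poisson-kernel} are not needed), while the paper's route leverages those already-established kernel estimates to get a cleaner pointwise bound on $|w(x)|$ in one shot. One small cosmetic point: in your polynomial bound you wrote $f(2^kR)\le C(2^kR)^{n+Q}\mu(B(x_0,R))$, but the correct anchor is $\mu(B(x_0,1))$ (or any fixed ball); since $R$ is fixed this is harmless for the $k\to\infty$ limit.
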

\begin{proof}
Note that Proposition \ref{MVP-Schrodinger} implies $w$ is locally bounded. If \eqref{global-control} holds for some $x\in X$, then it holds for all $z\in X$ that
\begin{align*}
&\int_X \frac{|w(y)|^2}{(1+d(z,y))^n\mu(B(z,1+d(z,y)))}\d\mu(y)\\
&\ \le {\int_{X\setminus B(z,2d(x,z))}} \frac{|w(y)|^2}{(1+d(z,y))^n\mu(B(z,1+d(z,y)))}\d\mu(y)+C\|w\|_{L^2(B(z,2d(x,z)))}^2\\
&\ \le C{\int_{X\setminus B(z,2d(x,z))}} \frac{C|w(y)|^2}{(1+d(x,y))^n\mu(B(x,1+d(x,y)))}\d\mu(y)+C\|w\|_{L^2(B(z,2d(x,z)))}^2\\
&\ <\infty.
\end{align*}

For any $x_B\in X$, let $B:=B(x_B,r_B)$ with $r_B>>\max\{\rho(x_B),1\}$.
Let $\varphi$ be a Lipschitz function on $X $, that satisfy
$\varphi=1$ on $4B$, $\supp  \varphi \subset
5B$ and $|\nabla_x \varphi|\le C/r_B$.
Note that $w$ is locally H\"older continuous by Proposition \ref{MVP-Schrodinger}.
For any $x\in B(x_B,\rho(x_B))$, it follows from $\LV w=0$ that
\begin{align*}
w(x)\varphi(x)
&={-{\int_0^\infty \partial_t e^{-t \LV}( w\varphi)(x)\d t}}{=\int_0^\infty \LV e^{-t \LV}( w\varphi)(x)\d t} \\
&=\int_0^\infty \int_{X}{V(y)} h^v_t(x,y) (w\varphi)(y)\d\mu(y)\d t +\int_0^\infty \int_{X} \langle \nabla_yh^v_t(x,y),\nabla_y (w\varphi)(y)\rangle\d\mu(y)\d t\\
&=\int_0^\infty \int_{X}{V(y)} h^v_t(x,y) (w\varphi)(y)\d\mu(y)\d t +\int_0^\infty \int_{X} \langle \nabla_y (h^v_t(x,y)\varphi(y)),\nabla_y w(y)\rangle\d\mu(y)\d t\\
&\ -\int_0^\infty \int_{X} \left[h^v_t(x,y) \langle \nabla_y \varphi(y),\nabla_y w(y)\rangle-w(y)\langle \nabla_yh^v_t(x,y), \nabla_y \varphi(y)\rangle \right]\d\mu(y)\d t\\
&=-\int_0^\infty \int_{X} \left[h^v_t(x,y) \langle \nabla_y \varphi(y),\nabla_y w(y)\rangle -
w(y)\langle \nabla_yh^v_t(x,y), \nabla_y \varphi(y)\rangle \right]
\d\mu(y)\d t.
\end{align*}
This implies
\begin{eqnarray*}
|w(x)\varphi(x)|&&\le \frac{C}{r_B}\int_0^\infty \int_{5B\setminus 4B} \left[h^v_t(x,y)|\nabla_y w(y)|+
| w(y)||\nabla_yh^v_t(x,y)|\right]
\d\mu(y)\d t.
\end{eqnarray*}
Note that by applying Caccioppoli's inequality (Lemma \ref{Cacc}) to  $w$ on $X$,
$$ \left(\fint_{5B}|\nabla_y w(y)|^2\d\mu(y)\right)^{1/2}\le \frac{C}{r_B}\left(\fint_{6B}|w(y)|^2\d\mu(y)\right)^{1/2}.$$
Similarly, by letting
$\phi$ be a Lipschitz function on $X$, that satisfies
$\phi=1$ on $5B\setminus 4B$, $\supp  \phi \subset
6B\setminus 3B$ and $|\nabla_y\phi|\le \frac{C}{r_B}$,
we deduce
\begin{align*}
&\int_{6B}|\nabla_y h^v_t(x,y)|^2\phi^2(y)\d\mu(y)\\
&\  =\int_{6B}\langle \nabla_yh^v_t(x,y),\nabla_y(h^v_t(x,y)\phi^2(y))\rangle\d\mu(y)- \int_{6B}h^v_t(x,y) \langle \nabla_yh^v_t(x,y),\nabla_y(\phi^2(y))\rangle\d\mu(y)\\
&\ =-\int_{6B}V(y){|h^v_t(x,y)|^2}\phi^2(y)\d\mu(y)-\int_{6B}\langle \partial_t h^v_t(x,y),h^v_t(x,y)\phi^2(y)\rangle\d\mu(y)\\
&\ \ -\int_{6B}h^v_t(x,y) \langle \nabla_yh^v_t(x,y),\nabla_y(\phi^2(y))\rangle\d\mu(y)\\
&\ \le \int_{6B}|\partial_t h^v_t(x,y)| h^v_t(x,y)\phi^2(y) \d\mu(y)+\int_{6B} \left[2|\nabla_y\phi(y)|^2{|h^v_t(x,y)|^2} + \frac 12|\nabla_yh^v_t(x,y)|^2 \phi^2(y)\right]\d\mu(y).
\end{align*}
Consequently, it follows
\begin{eqnarray*}
\int_{5B\setminus 4B}|\nabla_y h^v_t(x,y)|^2{\d\mu(y)}
&& \le C\int_{6B\setminus 3 B}\left[|\partial_t h^v_t(x,y)| h^v_t(x,y) +\frac{1}{r_B^2}{|h^v_t(x,y)|^2}\right]{\d\mu(y)}\\
&&\le C\int_{6B\setminus 3 B}\left[r_B^2|\partial_t h^v_t(x,y)|^2 +\frac{1}{r_B^2}{|h^v_t(x,y)|^2}\right]{\d\mu(y)}.
\end{eqnarray*}
By the above two Caccioppoli's inequalities together with the H\"older inequality, we obtain for any $x\in B(x_B,\rho(x_B))$,
\begin{align*}
|w(x)|
&\le \frac{C}{r_B}\int_0^\infty \int_{5B\setminus 4B} \left[h^v_t(x,y))|\nabla_y w(y)|+| w(y)||\nabla_yh^v_t(x,y)|\right]\d\mu(y)\d t\\
&\le \frac{C}{r_B} \int_0^\infty \left[ \left(\int_{5B\setminus 4B}{|h^v_t(x,y)|^2}\d\mu(y)\right)^{1/2}
\left(\int_{5B\setminus 4B}|\nabla_yw(y)|^2\d\mu(y)\right)^{1/2}\right. \\
&\ \left.+\left(\int_{5B\setminus 4B}|\nabla_yh^v_t(x,y)|^2\d\mu(y)\right)^{1/2}\left(\int_{5B\setminus 4B}{|w(y)|^2}\d\mu(y)\right)^{1/2}\right]\d t\\
&\le \frac{C}{r_B}\left(\int_{6B}{|w(y)|^2}\d\mu(y)\right)^{1/2}
\int_0^\infty \left[\int_{6B\setminus 3B} \left( \frac{1}{r_B^2}{|h^v_t(x,y)|^2}+r_B^2|\partial_t h^v_t(x,y)|^2\right)\d\mu(y)\right]^{1/2}\d t.
\end{align*}

Recall that $r_B>>\max\{\rho(x_B),1\}$ and $\rho(x)\sim \rho(x_B)$ for any $x\in B(x_B,\rho(x_B))$.
By using
Proposition \ref{est-heat-kernel}, we conclude that for $x\in B(x_B,\rho(x_B))$,
\begin{align*}
&\int_0^\infty \left(\int_{6B\setminus 3B}\frac{1}{r^2_B}{|h^v_t(x,y)|^2}\d\mu(y)\right)^{1/2}\d t\\
&\ \le\frac{1}{r_B}\int_0^\infty  \left(\int_{6B\setminus 3B}\frac{C}{\mu(B(y,\sqrt t))^2}\exp\left\{-\frac{2d(x,y)^2}{ct}\right\}
\exp\lf\{-2\epsilon\lf(1+\frac{\sqrt{t}}{\rho(x)}\r)^{{1}/{(k_0+1)}}\r\} \d\mu(y)\right)^{1/2}\d t\\
&\ \le\frac{1}{r_B}\int_0^\infty   \left(\int_{6B\setminus 3B}\frac{C}{\mu(B(x_B,\sqrt t))^2}\exp\left\{-\frac{d(y,x_B)^2}{ct}\right\}
\exp\lf\{-2\epsilon\lf(1+\frac{\sqrt{t}}{\rho(x)}\r)^{{1}/{(k_0+1)}}\r\} \d\mu(y)\right)^{1/2}\d t\\
&\ \le\frac{C}{r_B\mu(B)^{1/2}}\int_0^\infty  \exp\left\{-\frac{r_B^2}{ct}\right\}
\exp\lf\{-\epsilon\lf(1+\frac{\sqrt{t}}{\rho(x)}\r)^{{1}/{(k_0+1)}}\r\}\d t\\
&\ \le \frac{C}{r_B\mu(B)^{1/2}}\int_0^\infty \lf(\frac{t}{r^2_B}\r)^{n/4}\exp\lf\{-\epsilon\lf(1+\frac{\sqrt{t}}{\rho(x)}\r)^{{1}/{(k_0+1)}}\r\}\d t\\
&\ \le C\frac{\rho(x)^{n/2+2}}{r_B^{n/2+1}\mu(B)^{1/2}}\int_0^\infty s^{n/4}\exp\lf\{-\epsilon\lf(1+{\sqrt{s}}\r)^{{1}/{(k_0+1)}}\r\}\d s\\
&\ \le C\frac{\rho(x_B)^{n/2+1}}{r_B^{n/2}\mu(B)^{1/2}}.
\end{align*}
Similarly, there holds that
\begin{align*}
&\int_0^\infty \left(\int_{6B\setminus 3B}r_B^2|\partial_t h^v_t(x,y)|^2\d\mu(y)\right)^{1/2}\d t\\
&\ \le\int_0^\infty \frac {r_B}{t}  \left(\int_{6B\setminus 3B}\frac{C}{\mu(B(x,\sqrt t))^2}\exp\left\{-\frac{2d(x,y)^2}{ct}\right\}
\exp\lf\{-2\epsilon\lf(1+\frac{\sqrt{t}}{\rho(x)}\r)^{{1}/{(k_0+1)}}\r\} \d\mu(y)\right)^{1/2}\d t\\
&\ \le\int_0^\infty \frac {Cr_B}{t\mu(B)^{1/2}}  \exp\left\{-\frac{r_B^2}{ct}\right\}
\exp\lf\{-\epsilon\lf(1+\frac{\sqrt{t}}{\rho(x)}\r)^{{1}/{(k_0+1)}}\r\}\d t\\
&\ \le\frac {C}{r_B\mu(B)^{1/2}}\int_0^\infty   \exp\left\{-\frac{r_B^2}{ct}\right\}
\exp\lf\{-\epsilon\lf(1+\frac{\sqrt{t}}{\rho(x)}\r)^{{1}/{(k_0+1)}}\r\}\d t\\
&\ \le C\frac{\rho(x_B)^{n/2+1}}{r_B^{n/2}\mu(B)^{1/2}}.
\end{align*}
The two estimates yield that for any $x\in B(x_B,\rho(x_B))$,
\begin{eqnarray*}
|w(x)|
&&\le \frac{C}{r_B}\left(\int_{6B}{|w(y)|^2}\d\mu(y)\right)^{1/2}
\frac{\rho(x_B)^{n/2+1}}{r_B^{n/2}\mu(B)^{1/2}} \\
&&\le C\frac{\rho(x_B)^{n/2+1}}{r_B}\left(\int_{6B}\frac{|w(y)|^2}{(1+d(x_B,y))^n\mu(B(x_B,1+d(x_B,y))}\d\mu(y)\right)^{1/2} \\
&&\le C\frac{\rho(x_B)^{n/2+1}}{r_B},
\end{eqnarray*}
which tends to zero as $r_B\to\infty$.
Therefore $w\equiv 0$ on $B(x_B,\rho(x_B))$. As $x_B$ is arbitrary, we see that $w\equiv 0$.
\end{proof}

\begin{corollary}\label{Liouville-upper-space}
Let $(X,d,\mu,\mathscr{E})$ be a complete Dirichlet metric space satisfying $(D)$
 and admitting an $L^2$-Poincar\'{e} inequality.
 Suppose $0\le V\in RH_{q}(X)\cap A_\infty(X)$ with $q>(Q+1)/2$.
Suppose that $w$ is a solution to $(-\partial_t^2+\LV) w=0$ on $X \times \rr$.
If there exists $n>0$ such that
$${\int_\rr\int_X \frac{|w(y,t)|^2}{(1+t+d(x,y))^n\mu(B(x,1+t+d(x,y)))}\d\mu(y)\d t<\infty,}$$
then $w\equiv 0$.
\end{corollary}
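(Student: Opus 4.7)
The strategy is to reduce this corollary directly to Proposition \ref{Liouville}, applied on the product Dirichlet metric space rather than $X$ itself. Set $\mathcal{X} := X \times \mathbb{R}$ equipped with the product distance $d_{\mathcal{X}}((x,s),(y,t)) := \sqrt{d(x,y)^2 + (s-t)^2}$, the product measure $\mu_{\mathcal{X}} := \mu \otimes dt$, and the product Dirichlet form $\mathcal{E}_{\mathcal{X}}(u,v) := \int_{\mathcal{X}}(\partial_s u\,\partial_s v + \langle \nabla_x u, \nabla_x v\rangle)\,d\mu_{\mathcal{X}}$. The operator generated by $\mathcal{E}_{\mathcal{X}}$ is $\mathcal{L}_{\mathcal{X}} = -\partial_t^2 + \mathcal{L}$, so the hypothesis $(-\partial_t^2 + \mathscr{L}_V)w = 0$ on $X \times \mathbb{R}$ is precisely $(\mathcal{L}_{\mathcal{X}} + \widetilde V)w = 0$ on $\mathcal{X}$, where $\widetilde V(x,t) := V(x)$.

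Next I would verify the hypotheses of Proposition \ref{Liouville} on $\mathcal{X}$. The measure $\mu_{\mathcal{X}}$ is doubling of homogeneous dimension $Q+1$, and the $L^2$-Poincar\'e inequality on $X$ tensorizes to one on $\mathcal{X}$; this was already noted at the start of Section \ref{s4}. Likewise, it was pointed out there that $\widetilde V \in RH_q(\mathcal{X}) \cap A_\infty(\mathcal{X})$ with the same exponent $q$. The assumption $q > (Q+1)/2$ is exactly $q > \max\{1,(Q+1)/2\}$ (using $Q>1$), which is the hypothesis needed to invoke Proposition \ref{Liouville} on $\mathcal{X}$ with its homogeneous dimension $Q+1$ in the role of $Q$.

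It then remains to identify the integrability condition. Fix $X_0 := (x,0) \in \mathcal{X}$. Since $d_{\mathcal{X}}(X_0,(y,t)) \asymp |t| + d(x,y)$, the doubling property on the product space gives
\begin{align*}
\mu_{\mathcal{X}}\bigl(B_{\mathcal{X}}(X_0, 1+d_{\mathcal{X}}(X_0,(y,t)))\bigr) \asymp (1+|t|+d(x,y))\cdot \mu\bigl(B(x, 1+|t|+d(x,y))\bigr).
\end{align*}
Thus, absorbing one factor of $(1+|t|+d(x,y))$ into the denominator, the integrability hypothesis of the corollary (interpreted in the product sense, i.e.\ with $|t|$) is equivalent, up to adjusting $n$, to the condition \eqref{global-control} in Proposition \ref{Liouville} applied on $\mathcal{X}$. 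That proposition then yields $w \equiv 0$.

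The only real obstacle is purely bookkeeping: confirming that each structural assumption (strongly local and regular Dirichlet form, carr\'e du champ, completeness, doubling, Poincar\'e, and reverse H\"older membership) transfers cleanly from $X$ to $\mathcal{X}$, and that the critical function $\rho_{\mathcal{X}}$ of $\widetilde V$ on $\mathcal{X}$ is comparable to $\rho$ of $V$ on $X$ (so that the global control \eqref{global-control} on $\mathcal{X}$ reads as stated). With the exponent match $q > (Q+1)/2 = \max\{1,(Q+1)/2\}$ in hand, the proof is then essentially a one-line reduction.
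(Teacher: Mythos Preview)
Your proposal is correct and follows essentially the same approach as the paper: the paper's proof is simply the observation that $-\partial_t^2+\mathscr{L}$ is a Schr\"odinger operator on $X\times\mathbb{R}$ with potential $V(x,t):=V(x)\in RH_q(X\times\mathbb{R})\cap A_\infty(X\times\mathbb{R})$, $q>(Q+1)/2$, so Proposition~\ref{Liouville} applies directly. Your only extraneous remark is the one about comparing $\rho_{\mathcal{X}}$ with $\rho$; the condition \eqref{global-control} in Proposition~\ref{Liouville} does not involve the critical function at all, so no such comparison is needed.
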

\begin{proof}
Note that $-\partial_t^2+\LV$ is a Schr\"odinger operator on $X \times \rr$,
and by letting $V(x,t):=V(x)$, we know that $V\in RH_{q}(X\times\rr)\cap A_\infty(X\times\rr)$ with $q>(Q+1)/2$. Therefore,  Proposition \ref{Liouville} applies.
\end{proof}

 \begin{proposition}\label{Liouville-poisson}
 Let $(X,d,\mu,\mathscr{E})$ be a complete Dirichlet metric space satisfying $(D)$
 and admitting an $L^2$-Poincar\'{e} inequality.
 Suppose $0\le V\in RH_{q}(X)\cap A_\infty(X)$ with $q>(Q+1)/2$, and $u\in \HMO_\LV(X\times \rr_+)$.
For any $x\in X$ and $s,t>0$, there holds that
  \begin{align*}
  u(x,t+s)=\PV_t(u(\cdot,s))(x).
  \end{align*}
\end{proposition}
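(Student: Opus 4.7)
The plan is to reduce the identity $u(x,t+s)=\PV_t(u(\cdot,s))(x)$ to a uniqueness question on the full strip $X\times \rr$, and then to invoke the Liouville-type statement of Corollary \ref{Liouville-upper-space}. Fix $s>0$ and set
$$v(x,t):=u(x,t+s)-\PV_t(u(\cdot,s))(x),\qquad (x,t)\in X\times\rr_+.$$
The second summand is well-defined by Corollary \ref{lem3.2}. Both summands are weak solutions of $(-\partial_t^2+\LV)w=0$ on $X\times\rr_+$: the first because $u$ is $\LV_+$-harmonic, the second because $\PV_t=e^{-t\sqrt{\LV}}$ is the Poisson semigroup generated by $\sqrt{\LV}$ (and one may check the equation in the weak sense against Lipschitz test functions with compact support using the spectral/semigroup definition). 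Hence $v$ is also a weak solution on $X\times \rr_+$.

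The next step is to verify that $v$ extends continuously to the boundary $\{t=0\}$ with trace identically zero. Local H\"older continuity from Proposition \ref{MVP-Schrodinger} gives $u(x,t+s)\to u(x,s)$ as $t\to 0^+$. For $\PV_t(u(\cdot,s))(x)\to u(x,s)$, I would use Bochner's subordination formula \eqref{subordination} to reduce to the pointwise convergence $\HV_r(u(\cdot,s))(x)\to u(x,s)$ as $r\to 0^+$, which in turn follows from the Gaussian upper bound in Proposition \ref{est-heat-kernel}(i), the local boundedness and H\"older continuity of $u(\cdot,s)$, and dominated convergence (the bound needed at infinity is precisely the integrability supplied by Proposition \ref{est-poisson-hmo}). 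Once $v(\cdot,0)\equiv 0$, I would extend $v$ to $X\times\rr$ by odd reflection, $\tilde v(x,t):=\mathrm{sgn}(t)\,v(x,|t|)$. The vanishing trace together with the interior regularity of weak solutions lets one check directly that $\tilde v$ is a weak solution of $(-\partial_t^2+\LV)\tilde v=0$ on all of $X\times\rr$ by splitting any compactly supported Lipschitz test function across $\{t=0\}$ and using that the boundary contributions cancel.

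It then remains to verify the weighted $L^2$ growth hypothesis of Corollary \ref{Liouville-upper-space} for $\tilde v$, i.e.
$$\int_\rr\int_X\frac{|\tilde v(y,\tau)|^2}{(1+|\tau|+d(x,y))^n\,\mu(B(x,1+|\tau|+d(x,y)))}\,\d\mu(y)\,\d\tau<\infty$$
for some $n>0$ and some (equivalently every) $x\in X$. For the summand $u(y,\tau+s)$, Lemma \ref{lem3.1} gives the crude majorization $|u(y,\tau+s)|\le |u(y,s)|+C(1+|\log((\tau+s)/s)|)\|u\|_{\HMO_\LV}$, so that integration in $\tau$ against the $(1+\tau+d(x,y))^{-n}\mu(\cdots)^{-1}$ weight (for $n$ chosen sufficiently large, say $n\ge 3$) reduces the question to the control
$$\int_X\frac{|u(y,s)|^2}{(1+d(x,y))^{n-1}\,\mu(B(x,1+d(x,y)))}\,\d\mu(y)<\infty,$$
which is precisely Proposition \ref{est-poisson-hmo} applied at time level $\epsilon=s$. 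For the summand $\PV_\tau(u(\cdot,s))(y)$, I would use the Poisson kernel upper bound of Proposition \ref{est-poisson-kernel}(i) and the Cauchy–Schwarz inequality (against $p^v_\tau(y,z)\,\d\mu(z)$) to majorize $|\PV_\tau(u(\cdot,s))(y)|^2$ by a weighted integral of $|u(z,s)|^2$ that, after integration in $\tau$ and $y$, is again dominated by Proposition \ref{est-poisson-hmo}. Corollary \ref{Liouville-upper-space} then forces $\tilde v\equiv 0$, hence $v\equiv 0$, which is the claim.

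The main obstacle I expect is the interplay between the boundary-value argument and the growth bookkeeping: $u(\cdot,s)$ is not in $L^2(X)$ in general, only in the weighted class controlled by Proposition \ref{est-poisson-hmo}, so every step—continuous boundary trace for $\PV_\tau(u(\cdot,s))$, legitimacy of the odd reflection, and the final weighted $L^2$ bound—needs to be justified with the same weighted estimate, with enough uniformity to allow Fubini and dominated convergence. The logarithmic factor produced by Lemma \ref{lem3.1} for large $\tau$ is harmless thanks to the polynomial decay of the weight, but one has to pick $n$ large enough that both summands fit simultaneously, and then check that Corollary \ref{Liouville-upper-space} applies with that $n$.
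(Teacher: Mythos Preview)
Your proposal is correct and follows essentially the same route as the paper: define $v(x,t)=u(x,t+s)-\PV_t(u(\cdot,s))(x)$, check it vanishes at $t=0$, extend by odd reflection, and apply the Liouville property of Corollary~\ref{Liouville-upper-space} after verifying the weighted $L^2$ growth condition via Proposition~\ref{est-poisson-hmo} and Lemma~\ref{lem3.1}. Your treatment of the boundary trace $\PV_t(u(\cdot,s))(x)\to u(x,s)$ is in fact more explicit than the paper's (which simply asserts it from local H\"older continuity), and your Cauchy--Schwarz bound for the Poisson term is exactly the mechanism behind Corollary~\ref{lem3.2}, which the paper then quotes directly.
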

\begin{proof}
For $t> 0$, let
$$v(x,t):=u(x,t+s)-\PV_t(u(\cdot,s))(x).$$
As $u(x,t+s)$ is H\"older continuous on $X\times (-s,\infty)$ and
$u(x,s)$ is H\"older continuous on $X$, we see that
$$v(x,0):=\lim_{t\to 0^+}v(x,t)=\lim_{t\to 0^+}\left\{u(x,t+s)-\PV_t(  u(\cdot,s))(x)\right\}=0.$$
We extend $v(x,t)$ to $X\times\rr$ as
$$
w(x,t)=\begin{cases}
v(x,t), \quad & t>0;\\
0, & t=0;\\
-v(x,-t), & t<0.
\end{cases}
$$
Then $w$ is a solution to the Schr\"odinger equation
$(-\partial_t^2+ \LV) w=0$ on $X\times \rr$. By Corollary \ref{Liouville-upper-space}
and {the fact} that $w$ is odd with respect to $t$,
it is enough to show that there exists $n>0$ such that
\begin{eqnarray*}
&&\int_X\int_{0}^\infty \frac{|w(x,t)|^2}{(1+t+d(x,y))^n\mu(B(x,1+t+d(x,y)))}{\d\mu(x)}\d t<\infty.
\end{eqnarray*}
Let $n>0$ large enough to be fixed at the end of the proof.
By Proposition \ref{est-poisson-hmo}, we have
\begin{align*}
&\int_X\int_0^\infty \frac{|u(y,s+t)|^2}{(1+t+d(x,y))^n\mu(B(x,1+t+d(x,y)))}\d\mu(y)\d t\\
&\ \le \int_0^\infty   \frac{1}{(1+t)^{n-1}} \int_X \frac{|u(y,s+t)|^2}{(1+d(x,y))\mu(B(x,1+ d(x,y)))}\d\mu(y)\d t\\
&\ \le C\int_0^\infty   \frac{1}{({1+t})^{n-1}} \left( \|u(\cdot,s+t)\|^2_{L^\fz(B(x,2 ))}+{\left(1+\left|\log(s+t)\right| \right)^2}\|u\|^2_{\HMO_\LV}\right)\d t.
\end{align*}
{Apply Lemma \ref{lem3.1} to obtain}
\begin{eqnarray*}
\|u(\cdot,s+t)\|_{L^\fz(B(x,2 ))}&&\le\|u(\cdot,s+t)-u(\cdot,s)\|_{L^\fz(B(x,2 ))} +\|u(\cdot,s)\|_{L^\fz(B(x,2 ))}\\
&&\le \left\|\int_{s}^{s+t}{|\partial_ru(\cdot,r)|}\d r\r\|_{L^\fz(B(x,2 ))} +\|u(\cdot,s)\|_{L^\fz(B(x,2 ))}\\
&&\le C\|u\| _{\HMO_\LV}\log\lf(\frac{s+t}{s}\r)+\|u(\cdot,s)\|_{L^\fz(B(x,2 ))}.
\end{eqnarray*}
{Therefore one has}
\begin{align}\label{est-liouville-1}
&\int_X\int_0^\infty \frac{|u(y,s+t)|^2}{(1+d(x,y)+t)^n\mu(B(x,1+t+d(x,y)))}\d\mu(y)\d t\nonumber\\
&\ \le \int_0^\infty   \frac{C\left(\left(\log\frac{s+t}{s}\right)^2\|u\|^2_{\HMO_\LV}+\|u(\cdot,s)\|^2_{L^\fz(B(x,2 ))}+\left(1+\left|\log(s+t)\right| \right)^2\|u\|^2_{\HMO_\LV}\right)}{(1+t)^{n-1}}  \d t\nonumber\\
&\ \le C(s,\|u\|_{\HMO_\LV},\|u(\cdot,s)\|_{L^\fz(B(x,2 ))})<\infty.
\end{align}

For the remaining term, note that by the estimate in Corollary \ref{lem3.2}, it holds
for all $t>0$,
\begin{eqnarray*}
|\PV_t(u(\cdot,s))(x)|&&\le C|\PV_t(1)(x)|^{1/2}\left(\int_X\frac{t|u(y,s)|^2}{(t+d(x,y))\mu(B(x,t+d(x,y)))}\d\mu(y)\right)^{1/2}\nonumber\\
&&\le C\left(\|u(\cdot,s)\|_{L^\fz(B(x,2t))}+\left(1+\left|\log\frac{t}{s}\right| \right)\|u\|_{\HMO_\LV}\right),
\end{eqnarray*}
which implies for $t\le 1$,
\begin{eqnarray*}
|\PV_t(  u(\cdot,s))(x)|&&\le
C\left( \|u(\cdot,s)\|_{L^\fz(B(x,2))}+C\left(1+\left|\log\frac{t}{s}\right| \right)\|u\|_{\HMO_\LV}\right),
\end{eqnarray*}
and for $t>1$,
\begin{eqnarray*}
|\PV_t(  u(\cdot,s))(x)|&&\le |\PV_t( 1)(x)|^{1/2}\left( \int_X\frac{t|u(y,s)|^2}{(t+d(x,y))\mu(B(x,t+d(x,y)))}\d\mu\right)^{1/2}\\
&&\le C\left( \int_X\frac{t|u(y,s)|^2}{(1+d(x,y))\mu(B(x,1+d(x,y)))}\d\mu\right)^{1/2}\\
&&\le C\sqrt t \left( \|u(\cdot,s)\|_{L^\fz(B(x,2 ))}+C\left(1+\left|\log s\right| \right)\|u\| _{\HMO_\LV}\right).
\end{eqnarray*}
Therefore, it holds that
\begin{align*}
&\int_X\int_0^\infty \frac{|\PV_t(u(\cdot,s))(y)|^2}{(1+t+d(x,y))^n\mu(B(x,1+t+d(x,y)))}\d\mu(y)\d t\\
&\ \le \int_0^\infty \int_X \frac{C(1+t) \left( \|u(\cdot,s)\|^2_{L^\fz(B(x,2 ))}+\left(1+{\left|\log s\right|}
+\left|\log t\right| \right)^2\|u\|^2_{\HMO_\LV}\right)}{(1+t)^{n-1}(1+t+d(x,y)) \mu(B(x,1+t+d(x,y)))} \d\mu(y)\d t\\
&\ \le \int_0^\infty \frac{C}{(1+{t})^{n-2}}\left( \|u(\cdot,s)\|^2_{L^\fz(B(x,2 ))}+C\left(1+\left|\log s\right| +\left|\log t\right| \right)^2\|u\|^2_{\HMO_\LV}\right)  \d t\\
&\ \le C(s,\|u\|_{\HMO_\LV},\|u(\cdot,s)\|_{L^\fz(B(x,2 ))})<\infty,
\end{align*}
which together with \eqref{est-liouville-1} yields that
\begin{eqnarray*}
&&\int_X\int_{-\infty}^\infty \frac{|w(y,t)|^2}{(1+t+d(x,y))^n\mu(B(x,1+t+d(x,y)))}\d\mu(y)\d t<\infty,
\end{eqnarray*}
{provided $n>3$.}
Corollary \ref{Liouville-upper-space} then implies
$w(x,t)\equiv 0,$
which means $u(x,t+s)\equiv\PV_t(u(\cdot,s))(x)$, and thus completes the proof.
\end{proof}

We have following uniform norm estimate.
\begin{lemma}\label{uniform-hmol}
Let $(X,d,\mu,\mathscr{E})$ be a complete Dirichlet metric space satisfying $(D)$
and admitting an $L^2$-Poincar\'{e} inequality.
Suppose $0\le V\in RH_{q}(X)\cap A_\infty(X)$ with $q>(Q+1)/2$, and  $u\in \HMO_\LV(X\times \rr_+)$.
Then for any $s>0$, it holds
\begin{eqnarray*}
\sup_{B}\lf(\int_0^{r_B}\fint_{B}|t\sqrt{\LV}\PV_t(u(\cdot,s))|^2\d \mu\frac{\d t}{t}\r)^{1/2}&&\le
C\|u\|_{\HMO_\LV}.
\end{eqnarray*}
\end{lemma}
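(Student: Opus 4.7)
The strategy is to reduce the Poisson extension of $u(\cdot,s)$ back to the original function $u$, using the Liouville-type identity from Proposition~\ref{Liouville-poisson}, and then compare with the $\HMO_\LV$ norm via a change of variables.

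First I invoke Proposition~\ref{Liouville-poisson} to write $\PV_t(u(\cdot,s))(x)=u(x,t+s)$. Since $\partial_t\PV_t=-\sqrt{\LV}\,\PV_t$, differentiating this identity in $t$ yields the pointwise equality
$$
|t\sqrt{\LV}\PV_t(u(\cdot,s))(x)|=|t\,\partial_t u(x,t+s)|.
$$
Substituting $\tau=t+s$ and using $\tau-s\le\tau$ (since $s\ge 0$) converts the quantity we wish to control into
\begin{align*}
\int_0^{r_B}\fint_B|t\,\partial_t u(x,t+s)|^2\,\d\mu(x)\frac{\d t}{t}
&=\int_s^{s+r_B}\fint_B(\tau-s)|\partial_\tau u(x,\tau)|^2\,\d\mu(x)\,\d\tau\\
&\le\int_s^{s+r_B}\fint_B|\tau\,\partial_\tau u(x,\tau)|^2\,\d\mu(x)\frac{\d\tau}{\tau}.
\end{align*}

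Next, I split into two cases according to the size of $s$ relative to $r_B$. If $s\le r_B$, the interval $[s,s+r_B]$ is contained in $[0,2r_B]$, and by doubling $\fint_B\le C\fint_{2B}$. Applying the definition of $\|u\|_{\HMO_\LV}$ on the ball $2B$ (and using $|\partial_\tau u|\le|\nabla u|$) bounds the integral by $C\|u\|^2_{\HMO_\LV}$. If instead $s>r_B$, the interval lies far from $0$ and HMO cannot be applied directly; here I invoke the pointwise gradient bound from Lemma~\ref{lem3.1}, namely $|\tau\,\partial_\tau u(x,\tau)|\le C\|u\|_{\HMO_\LV}$ for all $x\in X$ and $\tau>0$, giving
$$
\int_s^{s+r_B}\fint_B|\tau\partial_\tau u|^2\,\frac{\d\tau}{\tau}
\le C\|u\|^2_{\HMO_\LV}\log\!\left(1+\frac{r_B}{s}\right)
\le C\|u\|^2_{\HMO_\LV},
$$
since $r_B/s<1$. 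Combining the two cases yields the claimed uniform bound, with a constant independent of $s$ and $B$.

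No serious obstacle is anticipated, as both pieces rely only on results already established. The only mildly delicate point is the case split at $s=r_B$: one cannot simply rescale $t\mapsto t+s$ and appeal to the HMO norm, because the inequality $\tau-s\le\tau$ absorbs the Hardy-type singularity $\d t/t$ near $t=0$ (turning it into the integrable measure $\d\tau/\tau$ on $[s,s+r_B]$) only at the price of losing the information needed to cover intervals far from the boundary; Lemma~\ref{lem3.1} is precisely what compensates in that regime.
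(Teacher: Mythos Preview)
Your proof is correct and follows essentially the same approach as the paper: both invoke Proposition~\ref{Liouville-poisson} to rewrite $t\sqrt{\LV}\PV_t(u(\cdot,s))=-t\partial_t u(\cdot,t+s)$, then split into the cases $s\le r_B$ (handled via the change of variables $\tau=t+s$, doubling, and the $\HMO_\LV$ norm on an enlarged ball) and $s>r_B$ (handled via the pointwise bound of Lemma~\ref{lem3.1}). The only cosmetic difference is that in the second case the paper keeps the integral in the $t$ variable and bounds $\int_0^{r_B}\frac{t}{(t+s)^2}\,\d t$, whereas you bound $\int_s^{s+r_B}\frac{\d\tau}{\tau}=\log(1+r_B/s)$ after substitution; both are $O(1)$ when $r_B\le s$.
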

\begin{proof}
By Proposition \ref{Liouville-poisson}, there holds that
\begin{eqnarray*}
\lf(\int_0^{r_B}\fint_{B}|t\sqrt{\LV}\PV_tu_s|^2\d \mu\frac{\d t}{t}\r)^{1/2}
&&= \lf(\int_{0}^{r_B}{\fint_{B}}|t\partial_t u(y,t+s)|^2\d \mu(y)\frac{\d t}{t}\r)^{1/2},
\end{eqnarray*}
{where and in what follows, we set $u_s(\cdot)=u(\cdot,s)$.}
If $r_B>s$, then
\begin{eqnarray*}
\lf(\int_0^{r_B}\fint_{B}|t\sqrt{\LV}\PV_tu_s|^2\d \mu\frac{\d t}{t}\r)^{1/2}&&\le  \lf(\frac{\mu(B(x_B,r_B+s))}{\mu(B(x_B,r_B))}\int_{0}^{r_B+s}
\fint_{B(x_B,r_B+s)}|t\partial_t u(y,t)|^2\d \mu(y)\frac{\d t}{t}\r)^{1/2}\nonumber\\
&&\le C\|u\|_{\HMO_\LV}.
\end{eqnarray*}
Otherwise $r_B\le s$, Lemma \ref{lem3.1} implies that
\begin{eqnarray*}
\lf(\int_0^{r_B}\fint_{B}|t\sqrt{\LV}\PV_t u_s|^2\d \mu\frac{\d t}{t}\r)^{1/2}&&\le  \lf(\int_{0}^{r_B}{\fint_{B}}\frac{Ct^2}{(t+s)^2}\|u\|^2_{\HMO_\LV}\d \mu\frac{\d t}{t}\r)^{1/2}\nonumber\\
&&\le C\|u\|_{\HMO_\LV},
\end{eqnarray*}
as required.
\end{proof}
We next establish an identity between $u\in \HMO_\LV(X\times\rr_+)$ and  $f\in H^1_\rho(X)$
which can be written as the sum of finite atoms.
The proof is similar to that of  \cite[Proposition 5.1]{DY2005-1}.

\begin{lemma}\label{reproducing-formula}
 Let $(X,d,\mu,\mathscr{E})$ be a complete Dirichlet metric space satisfying $(D)$
and admitting an $L^2$-Poincar\'{e} inequality.
Suppose $0\le V\in RH_{q}(X)\cap A_\infty(X)$ with $q>(Q+1)/2$, and $u\in \HMO_\LV(X\times \rr_+)$.
For $f\in H^1_\rho(X)$ which can be written as the sum of finite atoms, it holds
\begin{align*}
\int_X fu_\epsilon \d\mu=4\int_0^\fz\int_Xt\sqrt{\LV}\PV_t u_\epsilon t\sqrt{\LV}\PV_t f\d \mu\frac{\d t}{t}.
\end{align*}
\end{lemma}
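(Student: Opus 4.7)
The strategy is a Calder\'on reproducing formula argument based on spectral calculus for the self-adjoint operator $\LV$, combined with the self-adjointness of the Poisson semigroup $\PV_t$. Since $V\not\equiv 0$, $\LV$ is strictly positive, so by the identity $\int_0^\infty 4s^2 e^{-2s}\,\frac{ds}{s}=1$ applied via the functional calculus,
\begin{align*}
g = 4\int_0^\infty (t\sqrt{\LV})^2 e^{-2t\sqrt{\LV}} g\,\frac{dt}{t}\qquad \text{in } L^2(X),
\end{align*}
for every $g\in L^2(X)$. I plan to apply this to $g=f$, pair against $u_\epsilon$, exchange the order of integration, and then transfer one copy of $t\sqrt{\LV}\PV_t$ onto $u_\epsilon$ using self-adjointness and the semigroup identity $\PV_{2t}=\PV_t\circ \PV_t$.

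First I would set up the pairing. Since $f=\sum_{j=1}^N \lambda_j a_j$ is a \emph{finite} sum of $(1,2)_\rho$-atoms, $f\in L^2(X)$ and has bounded support. By Proposition~\ref{MVP-Schrodinger}, $u_\epsilon$ is locally bounded, so $\int_X f u_\epsilon\,d\mu$ is well defined. Writing
\begin{align*}
\int_X f u_\epsilon\,d\mu = 4\int_X u_\epsilon \left(\int_0^\infty (t\sqrt{\LV})^2 \PV_{2t}f\,\frac{dt}{t}\right)d\mu,
\end{align*}
I would then apply Fubini to exchange the two integrals, arriving at $4\int_0^\infty \int_X u_\epsilon\cdot (t\sqrt{\LV})^2 \PV_{2t} f\,d\mu\,\frac{dt}{t}$. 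For each fixed $t$, self-adjointness of $\PV_t$ combined with $\PV_{2t}=\PV_t\circ\PV_t$ gives
\begin{align*}
\int_X u_\epsilon\cdot (t\sqrt{\LV})^2 \PV_{2t} f\,d\mu
= \int_X t\sqrt{\LV}\PV_t u_\epsilon \cdot t\sqrt{\LV}\PV_t f\,d\mu,
\end{align*}
where $\PV_t u_\epsilon = u(\cdot,t+\epsilon)$ is meaningful by Corollary~\ref{lem3.2} and Proposition~\ref{Liouville-poisson}, and $t\sqrt{\LV}\PV_t u_\epsilon = -t\partial_t u(\cdot,t+\epsilon)$ is globally controlled by $\|u\|_{\HMO_\LV}$ via Lemma~\ref{lem3.1}. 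This would yield the identity.

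The main obstacle is justifying the Fubini step, because $u_\epsilon$ is only locally bounded (it has BMO-type growth) while the time integral ranges over all of $(0,\infty)$. To handle this I would estimate
\begin{align*}
\int_0^\infty \int_X |u_\epsilon(y)|\,\bigl|(t\sqrt{\LV})^2\PV_{2t}f(y)\bigr|\,d\mu(y)\,\frac{dt}{t}
\end{align*}
by splitting atom-by-atom: for each atom $a_j$ supported in $B_j=B(x_j,r_j)$, Proposition~\ref{est-poisson-kernel} (applied to the kernel of $(t\sqrt{\LV})^2\PV_{2t}$, which is $-t^2\partial_s^2 p^v_s$ at $s=2t$) provides the decay
\begin{align*}
\bigl|(t\sqrt{\LV})^2\PV_{2t}a_j(y)\bigr|\le C\,\frac{\min\{t/r_j, r_j/t\}^{\theta}\,t}{(t+d(y,x_j))\,\mu(B(x_j,t+d(y,x_j)))},
\end{align*}
arising in the usual way from atomic cancellation when $t\ge r_j$ and from direct size bounds when $t<r_j$. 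Coupled with the growth control for $u_\epsilon$ coming from Proposition~\ref{est-poisson-hmo} (polynomial-logarithmic in $d(y,x_j)$), these decays make the time and space integrals absolutely convergent. Passing to the limit in the truncated Calder\'on formula $f=4\int_\eta^N (t\sqrt{\LV})^2\PV_{2t}f\,\frac{dt}{t}+R_{\eta,N}$ (with $R_{\eta,N}\to 0$ in $L^2$) and pairing against $u_\epsilon\,\mathbf{1}_{\supp f\cup\{\text{fattened neighborhood}\}}$ gives the LHS, while the RHS converges by the just-mentioned absolute integrability. The remaining steps are routine manipulations with self-adjointness and the semigroup property, mirroring the argument in \cite[Proposition 5.1]{DY2005-1}.
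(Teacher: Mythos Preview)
Your overall strategy --- spectral Calder\'on formula, self-adjointness, truncation --- matches the paper's, but two of your estimates need repair.

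First, the pointwise bound you display, with the factor $\min\{t/r_j,r_j/t\}^\theta$, cannot hold for small $t$ when $y$ lies in or near $\supp a_j$: for a generic $L^2$ atom there is no decay of $(t\sqrt{\LV})^2\PV_{2t}a_j(y)$ as $t\to 0$ that is uniform in $y\in B_j$ (think of $a_j$ a normalized step function). Hence a direct Fubini over $(0,\infty)\times X$ fails on the local part near $t=0$. The paper does exactly what you allude to at the end: truncate to $t\in[\gamma,R]$, transfer one $t\sqrt{\LV}\PV_t$ to $u_\epsilon$ by self-adjointness, apply Fubini on the truncated integral, and then on a neighborhood $4B\supset\supp f$ use $u_\epsilon\in L^2(4B)$ together with the $L^2$-convergence $4\int_\gamma^R t^2\LV\PV_{2t}f\,\tfrac{dt}{t}\to f$.

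Second, and more seriously, your large-$t$ estimate relies on ``atomic cancellation,'' but $(1,2)_\rho$-atoms with $r_j\ge\rho(x_j)$ need \emph{not} have mean zero. On the global piece $X\setminus 4B$ the paper does not use cancellation at all; instead it uses the extra factor $\bigl(1+\tfrac{t+d(x,y)}{\rho(y)}\bigr)^{-N}$ in the Poisson bound of Proposition~\ref{est-poisson-kernel}(i). Combined with Lemma~\ref{crit-func-pro-1} (so that $\rho(y)\le C(x_B,r_B)$ for $y\in B$), this yields
\[
|t^2\LV\PV_{2t}f(x)|\le C(x_B,\rho(x_B),r_B)\,\|f\|_{L^1(B)}\,\frac{t}{(t+d(x,x_B))^{N+1}\,\mu(B(x_B,t+d(x,x_B)))},
\]
and the extra power $(t+d)^{-N}$ is exactly what makes $\int_0^\infty(\cdots)\tfrac{dt}{t}$ converge to a bound of order $[d(x,x_B)\,\mu(B(x_B,d(x,x_B)))]^{-1}$, which then pairs against $|u_\epsilon|$ via Proposition~\ref{est-poisson-hmo}. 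Without the $\rho$-decay, the large-$t$ tail on $X\setminus 4B$ is in general not integrable. The paper also precedes all this with a tent-space atomic decomposition of $t\sqrt{\LV}\PV_t f$ (from \cite{HLMMY2011}) together with Lemma~\ref{uniform-hmol}, which shows the right-hand side is absolutely convergent with bound $C\|f\|_{H^1_\rho}\|u\|_{\HMO_\LV}$; this is needed to interpret the RHS as an integral rather than merely a limit, and is the estimate invoked in the subsequent lemma.
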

\begin{proof}
For $f\in H^1_\rho(X)$
which can be written as the sum of finite atoms, $f$ has compact support and
belongs to $L^2(X)$.  Suppose that $\supp f\subset B:=B(x_B,r_B)$.

{\bf Step 1.}
By Theorem \ref{hardy-equiv}, it holds
\begin{align*}
\left\|\mathcal{S}_\PV(f)\right\|_{L^1}&=\int_X\left(\int_0^\fz\fint_{B(x,{t})}
|t\sqrt{\LV}\PV_t(f)(z)|^2{\d \mu(z)}\frac{\d t}{t}\right)^{1/2}\d\mu(x)\le C\|f\|_{H^1_\rho}.
\end{align*}
By \cite[Proposition 4.10]{HLMMY2011} (see \cite{JY2011,Ru07}), we can write
$$t\sqrt{\LV}\PV_t(f)(x)=\sum_{j}\lz_ja_j(x,t),$$
where $\sum_j|\lz_j|\sim \left\|\mathcal{S}_\PV(f)\right\|_{L^1}$, and
for each $j$, there exists a ball $B_j\subset X$ such that,
 $\supp a_j\subset B_j\times (0,r_{B_j})$, and
 $$\int_0^{r_{B_j}}{\int_{B_j}}|a_j(x,t)|^2\frac{\d\mu(x)\d t}{t}\le{ \frac{1}{\mu(B_j)}}.$$

On the other hand, Lemma \ref{uniform-hmol} gives
\begin{eqnarray*}
\sup_{B}\lf(\int_0^{r_B}\fint_{B}|t\sqrt{\LV}\PV_t({u(\cdot,\epsilon)})|^2\d \mu\frac{\d t}{t}\r)^{1/2}&&\le
C\|u\|_{\HMO_\LV},
\end{eqnarray*}
and hence,
\begin{align}\label{hardy-bmo}
&\int_0^\infty\int_X{\left|t\sqrt{\LV}\PV_t(u(\cdot,\epsilon))(x)\right|}\left|t\sqrt{\LV}\PV_t f(x)\right|\d\mu(x)\frac{\d t}{t} \nonumber\\
&\ \le \sum_j|\lz_j|\int_0^{r_{B_j}}\int_{{B_j}}\left|t\sqrt{\LV}
\PV_t({u(\cdot,\epsilon)})(x)\right|\left|a_j(x,t)\right|\d\mu(x)\frac{\d t}{t}\nonumber\\
&\ \le C \sum_j|\lz_j| \left(\int_0^{r_{B_j}}\int_{B_j}|a_j(x,t)|^2\frac{\d\mu(x)\d t}{t}\right)^{1/2}
 \left(\int_0^{r_{B_j}}\int_{B_j}|t\sqrt{\LV}\PV_t({u(\cdot,\epsilon)})(x)|^2\frac{\d\mu(x)\d t}{t}\right)^{1/2}\nonumber \\
&\ \le C\|f\|_{H^1_\rho} \|u\|_{\HMO_\LV}.
\end{align}

\textbf{Step 2.} Let us verify the required identity,
\begin{align*}
\int_X u_\epsilon f\d\mu=4\int_0^\fz\int_Xt\sqrt{\LV}\PV_t u_\epsilon t\sqrt{\LV}\PV_t f\d \mu\frac{\d t}{t}.
\end{align*}
By \eqref{hardy-bmo}, we see that
\begin{align*}
\int_0^\fz\int_X t\sqrt{\LV}\PV_t u_\epsilon t\sqrt{\LV}\PV_t f\d \mu\frac{\d t}{t}&=
\lim_{\substack{\gamma\to 0^+,\, R\to+\fz}}\int_{\gamma}^R\int_X t\sqrt{\LV}\PV_t u_\epsilon t\sqrt{\LV}\PV_t f\d \mu\frac{\d t}{t}\\
& =\lim_{\substack{\gamma\to 0^+,\, R\to+\fz}}\int_{\gamma}^R \int_X u_\epsilon \,t^2{\LV}\PV_{2t} f\d \mu\frac{\d t}{t}\\
& =\lim_{\substack{\gamma\to 0^+,\, R\to+\fz}}\int_X u_\epsilon\lf(\int_{\gamma}^Rt^2{\LV}\PV_{2t} f\frac{\d t}{t}\r)\d\mu \\
&=\lim_{\substack{\gamma\to 0^+,\, R\to+\fz}}\int_{4B}\cdots\d \mu+\lim_{\substack{\gamma\to 0^+ \\ R\to+\fz}}\int_{(4B)^\complement}\cdots\d \mu.
\end{align*}

For the local part, as $u$ is locally H\"older continuous on $X\times \rr_+$,
it holds $u(\cdot,\epsilon)\in L^2(4B)$. Moreover,
by the spectral theory, one has
$$f(x)=4\int_0^\fz t^2{\LV}\PV_{2t} f(x)\d \mu\frac{\d t}{t}=\lim_{\substack{\gamma\to 0^+,\, R\to+\fz}}4\int_\gamma^R t^2{\LV}\PV_{2t} f(x)\d \mu\frac{\d t}{t}$$
in $L^2(X)$.
Therefore, it holds
\begin{align}\label{converge-1}
\lim_{\substack{\gamma\to 0^+,\, R\to+\fz}}\int_{4B}u_\epsilon\lf(\int_{\gamma}^R t^2{\LV}\PV_{2t} f(x)\frac{\d t}{t}\r)\d \mu
=\frac14\int_{4B}u_\epsilon f\d \mu.
\end{align}

To compute the global part, we first estimate the inner integral.
Recall that $\supp f\subset B=B(x_B,r_B)$.
It follows from Proposition \ref{est-poisson-kernel} and Lemma \ref{crit-func-pro-1} that for any $t>0$
and $x\in X\setminus 4B$,
\begin{eqnarray*}
|t^2{\LV}\PV_{2t} f(x)|&& \le C\int_{B}\frac{t|f(y)|}{(t+d(x,y)\mu(B(x,t+d(x,y)))}\lf(1+\frac{t+d(x,y)}{\rho(y)}\r)^{-N}\d\mu(y)\\
&&\le C\|f\|_{L^1(B)} \frac{t}{(t+d(x,x_B)\mu(B(x,t+d(x,x_B)))}\lf(\frac{\rho(x_B)\lf(1+\frac{r_B}{\rho(x_B)}\r)^{{k_0}/{(k_0+1)}}}{t+d(x,x_B)}\r)^{N}\\
&&\le C(x_B,\rho(x_B),r_B)\|f\|_{L^1(B)} \frac{t}{(t+d(x,x_B))^{N+1}\mu(B(x_B,t+d(x,x_B)))},
\end{eqnarray*}
where $N>0$ large enough. Hence there holds
\begin{eqnarray*}
\int_0^\infty|t^2{\LV}\PV_{2t} f(x)|\frac{\d t}{t}&&
\le \int_0^\infty\frac{C(x_B,\rho(x_B),r_B)\|f\|_{L^1(B)}}{(t+d(x,x_B))^{N+1}\mu(B(x_B,t+d(x,x_B)))}\d t\\
&&\le \frac{C(x_B,\rho(x_B),r_B)\|f\|_{L^1(B)}}{{d(x,x_B)}\mu(B(x_B,d(x,x_B)))}.
\end{eqnarray*}
By this estimate and  Proposition \ref{est-poisson-hmo}, we then obtain
\begin{align*}
&\int_{X\setminus 4B}\int_{\gamma}^R\left|u_\epsilon(x)  t^2{\LV}\PV_{2t} f(x)\r|\frac{\d t}{t}\d \mu(x)\\
&\ \le \int_{X\setminus 4B}\frac{C(x_B,\rho(x_B),r_B)\|f\|_{L^1(B)} |u(x,\epsilon)|}{{d(x,x_B)}\mu(B(x_B,d(x,x_B)))}\d\mu(x)\\
&\ \le  C(x_B,\rho(x_B),r_B)\|f\|_{L^1(B)} \int_{X\setminus 4B}\frac{|u(x,\epsilon)|}{{(r_B+d(x,x_B))}\mu(B(x_B,r_B+d(x,x_B)))}\d\mu(x)\\
&\  \le C(x_B,\rho(x_B),r_B)\|f\|_{L^1(B)} {\lf(\int_X\frac{|u(x,\epsilon)|^2}{{(r_B+d(x,x_B))}\mu(B(x_B,r_B+d(x,x_B)))}\d\mu(x)\r)^{1/2}}\\
&\  \le  C(x_B,\rho(x_B),r_B)\|f\|_{L^1(B)}\left(\|u(\cdot,\epsilon)\|_{L^\fz(B(x,2r_B))}
+\left(1+\left|\log\frac{r_B}{\epsilon}\right| \right){\|u\|_{\HMO_\LV}}\right)\\
&\   <\infty.
\end{align*}
This together with \eqref{converge-1} implies that
\begin{align*}
\int_0^\fz\int_X t\sqrt{\LV}\PV_t u_\epsilon t\sqrt{\LV}\PV_t f\d \mu\frac{\d t}{t}=\lim_{\substack{\gamma\to 0^+,\, R\to+\fz}}\int_{X}u_\epsilon\lf(\int_{\gamma}^R t^2{\LV}\PV_{2t} f(x)\frac{\d t}{t}\r)\d \mu
=\frac14\int_X u_\epsilon f\d \mu,
\end{align*}
and completes the proof.
\end{proof}

\begin{lemma}\label{existence-bmo}
Let $(X,d,\mu,\mathscr{E})$ be a complete Dirichlet metric space satisfying $(D)$
and admitting an $L^2$-Poincar\'{e} inequality.
 Suppose $0\le V\in RH_{q}(X)\cap A_\infty(X)$ with $q>(Q+1)/2$, and $u\in \HMO_\LV(X\times \rr_+)$.
Then for any $s>0$, $u(\cdot,s)\in \BMO_\LV(X)$. Moreover, there exits
$C>0$ such that for any $s>0$ it holds
\begin{align*}
\|u(\cdot,s)\|_{\BMO_\LV}\le C\|u\|_{\HMO_\LV}.
\end{align*}
\end{lemma}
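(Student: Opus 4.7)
The plan is to deduce $u(\cdot,s) \in \BMO_\LV(X)$ with the desired norm bound from the duality $(H^1_\rho)^\ast = \BMO_\LV$ (Theorem \ref{hardy-equiv}) combined with the reproducing formula of Lemma \ref{reproducing-formula}. Write $u_s := u(\cdot,s)$, which is locally H\"older continuous by Proposition \ref{MVP-Schrodinger}, hence locally integrable, so every pairing below is well-defined.

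Fix a ball $B = B(x_B, r_B)$ and a bounded function $\psi$ supported in $B$ with $\|\psi\|_\infty \le 1$; when $r_B < \rho(x_B)$, additionally require $\int_B \psi\,d\mu = 0$. Then $a := \psi/\mu(B)$ has $\supp a \subset B$, $\|a\|_{L^2} \le \mu(B)^{-1/2}$, and the requisite mean-zero property for $r_B < \rho(x_B)$, so $a$ is a single $(1,2)_\rho$-atom with $\|a\|_{H^1_\rho} \le 1$. Applying Lemma \ref{reproducing-formula} to $a$ (which is trivially a finite sum of atoms) yields
\begin{align*}
\int_B \psi u_s\,d\mu \;=\; 4\mu(B)\int_0^\infty\int_X t\sqrt{\LV}\PV_t u_s \cdot t\sqrt{\LV}\PV_t a\,d\mu\,\frac{dt}{t},
\end{align*}
and the estimate \eqref{hardy-bmo} from the proof of that lemma --- whose key inputs are the $s$-uniform Carleson bound of Lemma \ref{uniform-hmol} together with the tent-space atomic decomposition of $t\sqrt{\LV}\PV_t a$ --- bounds the right-hand side by $C\mu(B)\|a\|_{H^1_\rho}\|u\|_{\HMO_\LV} \le C\mu(B)\|u\|_{\HMO_\LV}$.

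To extract the $\BMO_\LV$ estimate, for $r_B < \rho(x_B)$, run the preceding argument with $\psi = (\phi - \phi_B)/2$ for arbitrary $|\phi|\le 1$; since such $\psi$ has mean zero, $\int_B \psi u_s\,d\mu = \int_B \psi(u_s - (u_s)_B)\,d\mu$, and taking the $\phi$-supremum yields $\fint_B |u_s - (u_s)_B|\,d\mu \le C\|u\|_{\HMO_\LV}$. For $r_B \ge \rho(x_B)$, the mean-zero requirement drops out and the same maneuver gives $\fint_B |u_s|\,d\mu \le C\|u\|_{\HMO_\LV}$. The two bounds combine to yield $\|u_s\|_{\BMO_\LV} \le C\|u\|_{\HMO_\LV}$, with constant independent of $s$. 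The only conceptual obstacle --- producing a bilinear pairing bound that depends only on $\|u\|_{\HMO_\LV}$ rather than on $s$-dependent quantities that might degenerate as $s\to 0^+$ or $s\to\infty$ --- has already been resolved in Lemma \ref{uniform-hmol} and in the proof of Lemma \ref{reproducing-formula}; what remains here is a standard duality unwinding.
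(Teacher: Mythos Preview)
Your proof is correct and follows essentially the same approach as the paper: both arguments feed the reproducing formula of Lemma \ref{reproducing-formula} and the Carleson--tent estimate \eqref{hardy-bmo} into the $H^1_\rho$--$\BMO_\LV$ duality. The only difference is presentational: the paper establishes $|\int_X f u_s\,d\mu| \le C\|f\|_{H^1_\rho}\|u\|_{\HMO_\LV}$ for finite atomic sums $f$ and then invokes the abstract duality of Theorem \ref{hardy-equiv} directly, whereas you test against a single atom $a = \psi/\mu(B)$ and unwind the duality by hand to recover the two defining conditions of the $\BMO_\LV$ norm. Your version has the minor advantage of not requiring the full identification $(H^1_\rho)^\ast = \BMO_\LV$, only the easy direction that a single atom has $H^1_\rho$-norm at most one; but the underlying mechanism is identical.
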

\begin{proof}
For any $f\in H^1_\rho(X)$ which can be written as the sum of finite atoms, it follows from Lemma \ref{reproducing-formula} that
\begin{align*}
\int_X f(x)u(x,s)\d\mu(x)=4\int_0^\fz\int_Xt\sqrt{\LV}\PV_t u_s t\sqrt{\LV}\PV_t f\d \mu\frac{\d t}{t}.
\end{align*}
By \eqref{hardy-bmo}, we have
\begin{align*}
\left|\int_X f(x)u(x,s)\d\mu(x)\right|\le C\|f\|_{H^1_\rho}\|u\|_{\HMO_\LV},
\end{align*}
where $C$ is independent of $s$. It is obvious that the class of
sum of finite atoms is dense in $H^1_\rho(X)$.
We then conclude from the duality $(H^1_\rho(X))^\ast=\BMO_\LV(X)$ (see Theorem \ref{hardy-equiv}) that $u(\cdot,s)\in \BMO_\LV(X)$ with $\|u(\cdot,s)\|_{\BMO_\LV}\le C\|u\|_{\HMO_\LV}.$
\end{proof}

\begin{proposition}\label{HmotoBmo}
Let $(X,d,\mu,\mathscr{E})$ be a complete Dirichlet metric space satisfying $(D)$
and admitting an $L^2$-Poincar\'{e} inequality.
 Suppose $0\le V\in RH_{q}(X)\cap A_\infty(X)$ with $q>(1+Q)/2$.
Then if $u\in \HMO_{\LV}(X\times \rr_+)$, there exists a function $f\in \BMO_\LV(X)$ such that $u(x,t)=\PV_tf(x)$.
Moreover there exists a constant $C>1$ such that
$$\|f\|_{\BMO_\LV}\le C\|u\|_{\HMO_{\LV}} .$$
  \end{proposition}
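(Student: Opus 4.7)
My plan is to obtain $f$ as a weak-$\ast$ cluster point of the family $\{u(\cdot,s)\}_{s>0}$ in the duality $(H^1_\rho(X))^{\ast}=\BMO_\LV(X)$ provided by Theorem~\ref{hardy-equiv}, and then identify the Poisson extension of $f$ with $u$ by invoking the semigroup identity $u(\cdot,t+s)=\PV_t(u(\cdot,s))$ already established in Proposition~\ref{Liouville-poisson}.

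First, Lemma~\ref{existence-bmo} gives the uniform control
$\|u(\cdot,s)\|_{\BMO_\LV}\le C\|u\|_{\HMO_\LV}$ for every $s>0$. Since the space $H^1_\rho(X)$ is separable (the finite sums of atoms with rational coefficients and rational-radius balls form a countable dense set), Banach--Alaoglu together with a diagonal argument produces a sequence $s_k\downarrow 0$ and some $f\in\BMO_\LV(X)$ with $u(\cdot,s_k)\rightharpoonup^{\ast} f$. Weak-$\ast$ lower semicontinuity of the dual norm immediately yields $\|f\|_{\BMO_\LV}\le C\|u\|_{\HMO_\LV}$.

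Next I would fix $x\in X$ and $t>0$ and verify $\PV_tf(x)=u(x,t)$. By the symmetry of the heat kernel and hence of the Poisson kernel, the function $y\mapsto p^v_t(x,y)=p^v_t(y,x)$ belongs to $H^1_\rho(X)$: the area-function computation in the proof of Proposition~\ref{poisson-hardy} applies verbatim with $k=0$ and produces the integrable bound
\[
\mathcal{S}_{\PV}(p^v_t(\cdot,x))(y)\le \frac{C\,t^{1/2}}{(t+d(y,x))^{1/2}\mu(B(x,t+d(y,x)))}.
\]
Testing the weak-$\ast$ convergence $u(\cdot,s_k)\rightharpoonup^{\ast} f$ against this $H^1_\rho$-function yields
\[
\int_X p^v_t(x,y)\,u(y,s_k)\,\d\mu(y)\longrightarrow \int_X p^v_t(x,y)\,f(y)\,\d\mu(y)=\PV_tf(x).
\]
By Proposition~\ref{Liouville-poisson} the left-hand side equals $u(x,t+s_k)$, and by the local H\"older continuity of $u$ on $X\times\rr_+$ coming from Proposition~\ref{MVP-Schrodinger} we have $u(x,t+s_k)\to u(x,t)$. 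This identifies $u(x,t)=\PV_tf(x)$, as desired.

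The main subtle point is verifying that the weak-$\ast$ pairing $\langle g,u(\cdot,s_k)\rangle$ for $g=p^v_t(x,\cdot)\in H^1_\rho(X)$ coincides with the absolutely convergent integral $\int_X p^v_t(x,y)u(y,s_k)\,\d\mu(y)$. For $u(\cdot,s_k)\in L^2_{\loc}$ this is built into the definition of the pairing once $g$ is represented by an $L^1$-convergent atomic sum; the rapid decay of $p^v_t(x,\cdot)$ from Proposition~\ref{est-poisson-kernel} combined with the BMO-type growth control on $u(\cdot,s_k)$ given by Proposition~\ref{est-poisson-hmo} ensures Fubini's theorem applies to exchange the atomic sum with the integral. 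Once this identification is in hand, the argument above closes immediately, and the norm inequality $\|f\|_{\BMO_\LV}\le C\|u\|_{\HMO_{\LV}}$ is simply the lower-semicontinuity bound from the first step.
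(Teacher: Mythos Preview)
Your overall strategy---extract $f$ as a weak-$\ast$ limit of $u(\cdot,s_k)$ via Banach--Alaoglu and Lemma~\ref{existence-bmo}, then test against a Poisson-type kernel to identify $\PV_t f$ with $u(\cdot,t)$---is the same as the paper's. Your variant is to pair with $p^v_t(x,\cdot)$ directly, which (if it works) is cleaner: it yields $u(x,t)=\PV_t f(x)$ in one step and bypasses the paper's detour through $\partial_t u=\partial_t\PV_t f$ followed by the Liouville argument (Proposition~\ref{Liouville}) killing the $t$-independent remainder $h$.

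The gap is your claim that the proof of Proposition~\ref{poisson-hardy} ``applies verbatim with $k=0$'' and produces the displayed bound. It does not. In that proof the final $t$-integral is $\int_0^\infty t\,(t+s)^{-2k-1}\,\d t$, which converges only for $k\ge 1$; for $k=0$ it is $\int_0^\infty t/(t+s)\,\d t=\infty$. So the computation as written gives nothing for the undifferentiated kernel, and the specific bound you wrote down is not what the calculation yields. One can still prove $p^v_t(x,\cdot)\in H^1_\rho(X)$ by retaining the decay factor $\bigl(1+(r+d)/\rho\bigr)^{-N}$ from Proposition~\ref{est-poisson-kernel}(i), which tames the large-$t$ tail; but this is extra work, the resulting $H^1_\rho$-norm bound depends on $\rho$ and is not the clean expression you stated, and it is not ``verbatim.''

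The paper avoids this issue entirely by pairing with $\partial_t p^v_t(\cdot,y)$, which \emph{is} covered by Proposition~\ref{poisson-hardy}, at the cost of the additional Liouville step. If you want to keep your more direct route, you must supply an honest proof that $p^v_t(x,\cdot)\in H^1_\rho(X)$ (for instance via an atomic decomposition using the rapid decay of the kernel, or by redoing the area-function estimate with the $\rho$-dependent factor); otherwise, fall back to the paper's argument.
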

  \begin{proof}
We deduce that from Lemma \ref{existence-bmo}  that the family $\{u_\epsilon(\cdot)\}_{0<\epsilon<1}$ is uniformly bounded in $\BMO_\LV(X)$,
which, together with the Banach-Alaoglu theorem, yields that
there exist sequence $\epsilon_k\to0$ ($k\to\fz$) and function $f\in \BMO_\LV(X) $ such that  $u_{\epsilon_k}\to f$ { weak-$*$} in $\BMO_\LV(X)$,
and
$$\|f\|_{\BMO_\LV}\le C\|u\|_{\HMO_\LV}.$$

On the other hand, by Theorem \ref{hardy-equiv}
and Proposition \ref{poisson-hardy}, one has
$(H^1_\LV({X}))^\ast=\BMO_\LV(X)$ and {${\partial_t p_t^v(\cdot\,,y)}\in H^1_\LV(X)=H^1_\rho(X)$},
and therefore
\begin{align*}
{\int_X \partial_tp_t^v(x,y)u_{\epsilon_k}(y)\d \mu(y)\to \int_X {\partial_tp^v_t(x,y)}f(y)\d \mu(y),\quad k\to\fz.}
\end{align*}
This together with Proposition \ref{Liouville-poisson} implies that $\partial_tu(x,t)=\partial_t\PV_tf(x)$,
and therefore there exists a function $h(x)$ such that $u(x,t)=\PV_tf(x)+h(x)$. Since $u(x,t)$ and $\PV_tf(x)$ are $\LV_+$-harmonic functions on $X\times\rr_+$, they are locally H\"older continuous. Hence $h$ is also $\LV_+$-harmonic function on $X\times\rr_+$. Since $h$ is independent of $t$, $h$ satisfies
$\LV u=0$ on $X$.

Let us show that $h\equiv 0$. We fix a $t>0$.
For each $y\in X$, by letting $B=B(y,t)$ and using Proposition \ref{est-poisson-kernel}, one has
\begin{eqnarray}\label{poisson-bmo}
|\PV_tf(y)|&&\le C\int_X\frac{t|f(x)-f_B|}{(t+d(x,y))\mu(B(y,t+d(x,y)))}{\d\mu(x)}+|f_{B}|\nonumber\\
&&\le C\int_{B}\frac{t|f(x)-f_B|}{t\mu(B(y,t))}\d\mu(x)+
C\sum_{j=1}^\infty\int_{2^jB\setminus 2^{j-1}B}\frac{t|f(x)-f_B|}{2^jt\mu(B(y,2^jt))}\d\mu(x)
+ |f_{B}|\nonumber\\
&&\le C\|f\|_{\BMO_\LV}+C\sum_{j=1}^\infty2^{-j} \left(\fint_{2^jB}\left(|f(x)-f_{2^jB}|+|f_B-f_{2^jB}|\right)\d\mu(x)\right)+|f_B|\nonumber\\
&&\le C\|f\|_{\BMO_\LV}+C\sum_{j=1}^\infty2^{-j}j\|f\|_{\BMO_\LV}+|f_B|\nonumber\\
&&\le C\|f\|_{\BMO_\LV}+|f_B|.
\end{eqnarray}
If $t\ge\rho(y)$, then $|f_B|\le \|f\|_{\BMO_\LV}$. Otherwise $t<\rho(y)$, by Lemma \ref{crit-func-pro-1}, it holds
for a fixed $x_B\in X$
\begin{eqnarray*}
|f_B|&&\le |f_{B}-f_{B(y,\rho(y))}|+|f_{B(y,\rho(y))}|\le C\left(1+\log\frac{\rho(y)}{t}\right)\|f\|_{\BMO_\LV}\\
&&\le C\left(1+\log \frac{\rho(x_B)}{t}+\frac{k_0}{k_0+1}\log\lf(1+\frac{d(x_B,y)}{\rho(x_B)}\r)\right)\|f\|_{\BMO_\LV}\\
&&\le C\left(1+\log \frac{\rho(x_B)}{t}+\frac{d(x_B,y)}{\rho(x_B)}\r)\|f\|_{\BMO_\LV}.
\end{eqnarray*}
The above two estimates imply
\begin{eqnarray*}
\int_X\frac{|\PV_tf(y)|^2}{(1+d(x,y))^3\mu(B(x,1+d(x,y)))}\d\mu(y)
&&\le C\int_X\frac{\left(1+\left|\log \frac{\rho(x_B)}{t}\right|^2+\frac{{ d(x_B,y)^2}}{\rho(x_B)^2}\r)\|f\|^2_{\BMO_\LV}}
{{ (1+d(x,y))^3}\mu(B(x,1+d(x,y)))}\d\mu(y)\\
&&\le C(x_B,\rho(x_B),t)\|f\|^2_{\BMO_\LV}<\infty.
\end{eqnarray*}
On the other hand,
Proposition \ref{est-poisson-hmo} shows
\begin{eqnarray*}
&&\int_X\frac{|u(y,t)|^2}{(1+d(x,y))^3\mu(B(x,1+d(x,y)))}\d\mu(y)
\le C\left(\|u(\cdot,t)\|^2_{L^\fz(B(x,2))}+ \left(1+\left|\log t\right| \right)^2\|u\|^2_{\HMO_\LV}\right).
\end{eqnarray*}
We therefore can conclude via the equality $h(x)=u(x,t)-\PV_tf(x)$ that
\begin{align*}
&\int_X\frac{|h(y)|^2}{(1+d(x,y))^3\mu(B(x,1+d(x,y)))}\d\mu(y) \\
&\ \le 2\int_X\frac{|u(y,t)|^2}{(1+d(x,y))^3\mu(B(x,1+d(x,y)))}\d\mu(y)+2\int_X\frac{|\PV_tf(y)|^2}{(1+d(x,y))^3\mu(B(x,1+d(x,y)))}\d\mu(y)\\
&\ <\infty.
\end{align*}
Finally Proposition \ref{Liouville} shows that $h\equiv 0$, and therefore
$u(x,t)=\PV_tf(x)$ with $$\|f\|_{\BMO_\LV}\le C\|u\|_{\HMO_\LV},$$
which completes the proof.
\end{proof}

\section{From BMO to HMO}
\hskip\parindent In this section, we complete the proof of the main result by showing that every $\BMO_\LV(X)$ function $f$ induces a Carleson measure $|{t}\nabla\PV_tf|^2\d \mu\frac{\d t}{t}$.

The proof of that, the time derivative part $|{t}\partial_t \PV_tf|^2\d \mu\frac{\d t}{t}$
is a Carleson measure, is similar to \cite{DY2005-1,DGMTZ2005,LinLiu2011}.
We provide a proof for completeness.

\begin{lemma}\label{est-time-carleson}
Let $(X,d,\mu,\mathscr{E})$ be a complete Dirichlet metric space satisfying $(D)$
and admitting an $L^2$-Poincar\'{e} inequality.
Suppose $0\le V\in RH_{q}(X)\cap A_\infty(X)$ with $q>\max\{1,Q/2\}$, and  $f\in \BMO_\LV(X)$.
For any ball $B=B(x_B,r_B)$, it holds
\begin{align*}
\int_0^{r_B}\int_B|{t}\partial_t \PV_tf|^2\d \mu\frac{\d t}{t}\le C\mu(B)\,\|f\|^2_{\BMO_\LV}.
\end{align*}
\end{lemma}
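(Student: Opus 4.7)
The plan is to split $f$ at the natural scale $\rho(x_B)$ and handle each piece with a different tool. Set the constant $c_B := f_{4B}$ if $r_B < \rho(x_B)$ and $c_B := 0$ if $r_B \ge \rho(x_B)$, and decompose
\begin{align*}
f = (f - c_B)\mathbbm{1}_{4B} + (f - c_B)\mathbbm{1}_{X\setminus 4B} + c_B =: f_1 + f_2 + c_B,
\end{align*}
so that $t\partial_t \PV_t f = t\partial_t \PV_t f_1 + t\partial_t \PV_t f_2 + c_B\, t\partial_t \PV_t(1)$, and estimate the three pieces separately.

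For the local part $f_1$, the plan is to use the spectral-theoretic $L^2$-bound $\int_0^\infty \|t\partial_t \PV_t g\|_{L^2}^2 \frac{\d t}{t} \lesssim \|g\|_{L^2}^2$ (i.e. $g \mapsto t\partial_t \PV_t g$ is a Littlewood--Paley operator associated with $\LV$), together with a John--Nirenberg type self-improvement of the $\BMO_\LV$ norm, which holds in both regimes: $\fint_{4B}|f - f_{4B}|^2 \d\mu \lesssim \|f\|^2_{\BMO_\LV}$ when $r_B < \rho(x_B)$, and $\fint_{4B}|f|^2 \d\mu \lesssim \|f\|^2_{\BMO_\LV}$ when $r_B \ge \rho(x_B)$. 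Either way $\|f_1\|_{L^2}^2 \lesssim \mu(B)\|f\|_{\BMO_\LV}^2$.

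For the global part $f_2$, the plan is to bound $|t\partial_t \PV_t f_2(x)|$ pointwise using the Poisson kernel estimate from Proposition~\ref{est-poisson-kernel}(i), after splitting $X\setminus 4B$ into annuli $A_j := 2^{j+1}B \setminus 2^j B$. For $x\in B$ and $0<t<r_B$, since $d(x,y)\sim 2^j r_B$ on $A_j$, the kernel bound gives
\begin{align*}
|t\partial_t \PV_t f_2(x)| \lesssim \sum_{j\ge 1} \frac{t}{2^j r_B}\lf(1+\frac{2^j r_B}{\rho(x)}\r)^{-N} \fint_{2^{j+1}B}|f-c_B|\,\d\mu.
\end{align*}
A standard BMO telescoping argument, exploiting both regimes in the definition of $\BMO_\LV$, yields $\fint_{2^{j+1}B}|f - c_B|\,\d\mu \lesssim (j + 1 + \log_+(\rho(x_B)/r_B))\|f\|_{\BMO_\LV}$, and this logarithmic factor is absorbed either by the decay factor $(1+ 2^j r_B/\rho(x))^{-N}$ (for $j$ so large that $2^j r_B \gtrsim \rho(x)$) or by the hypothesis $r_B<\rho(x_B)$ when $c_B = f_{4B}$. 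Summing in $j$ yields $|t\partial_t \PV_t f_2(x)| \lesssim (t/r_B)\|f\|_{\BMO_\LV}$, so integration over $B\times(0,r_B)$ against $\d\mu\,\d t/t$ produces $\mu(B)\|f\|^2_{\BMO_\LV}$.

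Finally, for the constant piece $c_B$ (only present when $r_B < \rho(x_B)$, in which case $\rho(x)\sim \rho(x_B)$ for $x\in B$ by Lemma~\ref{crit-func-pro-1}), apply Proposition~\ref{est-poisson-kernel}(ii) to get $|t\partial_t \PV_t(1)(x)| \lesssim (t/\rho(x_B))^\delta$, and combine with the telescoping bound $|f_{4B}| \lesssim (1 + \log(\rho(x_B)/r_B))\|f\|_{\BMO_\LV}$, which follows since $|f_{B(x_B,\rho(x_B))}| \le \|f\|_{\BMO_\LV}$ by the second sup in the definition. Integrating,
\begin{align*}
\int_0^{r_B}\int_B |c_B\, t\partial_t \PV_t(1)|^2 \,\d\mu\frac{\d t}{t} \lesssim |f_{4B}|^2 \mu(B) \lf(\frac{r_B}{\rho(x_B)}\r)^{2\delta} \lesssim \mu(B)\|f\|^2_{\BMO_\LV},
\end{align*}
since the logarithmic divergence of $|f_{4B}|$ is dominated by the positive power $(r_B/\rho(x_B))^{2\delta}$. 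The main obstacle is the bookkeeping between the two regimes $r_B \lessgtr \rho(x_B)$: the logarithmic losses from BMO telescoping must be absorbed at every scale by the sub-Gaussian/power decay provided by the critical-function-aware kernel estimates of Proposition~\ref{est-poisson-kernel}.
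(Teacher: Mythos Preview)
Your proof is correct and follows essentially the same strategy as the paper: decompose $f$ into a local piece, a far piece, and a constant; use the $L^2$ Littlewood--Paley bound for the local piece, the pointwise Poisson kernel estimate (Proposition~\ref{est-poisson-kernel}(i)) plus annular telescoping for the far piece, and Proposition~\ref{est-poisson-kernel}(ii) for the constant. The only organizational difference is that the paper always subtracts $c_B=f_{4B}$ and then treats the constant piece in \emph{both} regimes $4r_B\lessgtr\rho(x_B)$ separately, whereas you set $c_B=0$ when $r_B\ge\rho(x_B)$ so that the constant piece disappears in the large-ball regime (at the cost of invoking the John--Nirenberg inequality for $\BMO_\LV$ on large balls for $f_1$). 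Either bookkeeping works; your logarithmic factor $\log_+(\rho(x_B)/r_B)$ in the $f_2$ estimate is in fact unnecessary, since standard BMO telescoping already gives $\fint_{2^{j+1}B}|f-f_{4B}|\,\d\mu\lesssim (j+1)\|f\|_{\BMO_\LV}$ and $\sum_j 2^{-j}(j+1)$ converges, but it does no harm.
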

\begin{proof}
Note first that
\begin{eqnarray}\label{est-bmo-carleson-1}
\int_0^{r_B}\int_B|{t}\partial_t \PV_t((f-f_{4B})\mathbbm{1}_{4B})|^2\d \mu\frac{\d t}{t}&&\le
\int_0^{\infty}\int_X|{t}\partial_t \PV_t((f-f_{4B})\mathbbm{1}_{4B})|^2\d \mu\frac{\d t}{t}\nonumber\\
&&\le C\|f-f_{4B}\|^2_{L^2(4B)}\nonumber\\
&&\le C\mu(B)\|f\|^2_{\BMO_\LV}.
\end{eqnarray}
Then, similar to the estimate \eqref{poisson-bmo}, by using Proposition \ref{est-poisson-kernel},
we deduce that, for any $x\in B$,
\begin{eqnarray}\label{est-bmo-carleson-2v}
|{t}\partial_t \PV_t((f-f_{4B})\mathbbm{1}_{X\setminus 4B})(x)|
&&\le \int_{X\setminus 4B}\frac{t|f(y)-f_{4B}|}{(t+d(x,y))\mu(B(y,t+d(x,y)))}\d\mu(y)\le \frac{Ct}{r_B}{\|f\|_{\BMO_\LV}},
\end{eqnarray}
and therefore,
\begin{eqnarray}\label{est-bmo-carleson-2}
\int_0^{r_B}\int_B|{t}\partial_t \PV_t((f-f_{4B})\mathbbm{1}_{X\setminus 4B})|^2\d \mu\frac{\d t}{t}
&&\le \int_0^{r_B}\int_B \frac{t^2}{r_B^2}{\|f\|^2_{\BMO_\LV}} \d \mu\frac{\d t}{t} \\ \nonumber
&&\le C\mu(B)\|f\|^2_{\BMO_\LV}.
\end{eqnarray}
Finally, by Proposition \ref{est-poisson-kernel},  for any $N>0$ it holds
\begin{align*}
   |t \partial_t\e^{-t\sqrt{\LV}}(f_{4B})(x)|\le C \lf(\dfrac{{t}}{\rho(x)}\r)^{\dz}\lf(1+\frac{{t}}{\rho(x)}\r)^{-N}|f_{4B}|,
\end{align*}
where $\delta\in(0,\min\{1,2-Q/q\})$.
If $4r_B<\rho(x_B)$, then
\begin{eqnarray*}
|f_{4B}|&&\le |f_{4B}-f_{B(x_B,\rho(y_B))}|+|f_{B(x_B,\rho(x_B))}|\le C\left(1+\log\frac{\rho(x_B)}{r_B}\right)\|f\|_{\BMO_\LV},
\end{eqnarray*}
which, together with Lemma \ref{crit-func-pro-1}, implies
\begin{eqnarray}\label{est-bmo-carleson-3}
\int_0^{r_B}\int_B|{t}\partial_t \PV_t(f_{4B})(x)|^2\d \mu(x)\frac{\d t}{t}
&&\le C\int_0^{r_B}\int_B \lf(\dfrac{{t}}{\rho(x)}\r)^{2\dz}\left(1+\log\frac{\rho(x_B)}{r_B}\right)^2\|f\|^2_{\BMO_\LV} \d \mu(x)\frac{\d t}{t}\nonumber\\
&&\le C\mu(B)\lf(\dfrac{{r_B}}{\rho(x_B)}\r)^{2\dz}\left(1+\log\frac{\rho(x_B)}{r_B}\right)^2\|f\|^2_{\BMO_\LV}\nonumber\\
&&\le C\mu(B)\|f\|^2_{\BMO_\LV}.
\end{eqnarray}
If $4r_B\ge \rho(x_B)$,  then $|f_{4B}|\le \|f\|_{\BMO_\LV}$. By Fubini's theorem, we  conclude that
\begin{eqnarray}\label{est-bmo-carleson-4}
\int_{0}^{r_B}\int_{B}|{t}\partial_t \PV_t(f_{4B})(x)|^2\d\mu(x)\frac{\d t}{t}
 &&\le C\int_0^{r_B}\int_{B} \left(\frac{t}{\rho(x)}\right)^{2\delta} \left(1+\frac{t}{\rho(x)}\right)^{-2N}|f_{4B}|^2\frac{\d \mu(x)\d t}{t}\nonumber\\
 &&\le C|f_{4B}|^2\int_{B} \int_0^{\fz}\left(\frac{t}{\rho(x)}\right)^{2\delta} \left(1+\frac{t}{\rho(x)}\right)^{-2N}\frac{\d t}{t}\d \mu(x)\nonumber\\
 &&\le C|f_{4B}|^2\int_{B} \int_0^{\fz}\frac{s^{2\delta}}{\left(1+s\right)^{2N}}\frac{\d s}{s}\d \mu(x)\nonumber\\
 &&\le C\mu(B)\|f\|_{\BMO_\LV}^2,
\end{eqnarray}
as soon as we choose $N>\delta$.
A combination of the estimates \eqref{est-bmo-carleson-1},  \eqref{est-bmo-carleson-2},  \eqref{est-bmo-carleson-3} and \eqref{est-bmo-carleson-4} completes the proof.
\end{proof}

We next estimate the space derivation part $|{t}\nabla_x \PV_t(f)|^2\d \mu\frac{\d t}{t}$,
which seems need more work due to lack of pointwise bound for the space derivative of the Poisson kernel.
\begin{proposition}\label{key-cacc}
Let $(X,d,\mu,\mathscr{E})$ be a complete Dirichlet metric space satisfying $(D)$
and admitting an $L^2$-Poincar\'{e} inequality.
Assume that $0\le V\in RH_{q}(X)\cap A_\infty(X)$ with $q>\max\{1,Q/2\}$.
Suppose that $g$ satisfies for some $y\in X$ that
$$\int_{X}\frac{|g(x)|}{(1+d(x,y))\mu(B(y,1+d(x,y)))}\d\mu(x)<\infty.$$
Then for any ball $B=B(x_B,r_B)$, it holds
\begin{eqnarray}\label{key-est}
\int_{0}^{r_B}\int_{B}|{t}\nabla_x \PV_tg|^2\d\mu\frac{\d t}{t}
 \le C\int_0^{2r_B}\int_{2B} \left(|t^2\partial^2_t\PV_tg||\PV_tg|+\frac{t^2}{r_B^2} |\PV_tg|^2 \right)\frac{\d \mu\d t}{t}.
\end{eqnarray}
Moreover, for any constant $c_0\neq 0$, it holds
\begin{align}\label{key-est-2}
\int_{0}^{r_B}\int_{B}|{t}\nabla_x \PV_tg|^2\d\mu\frac{\d t}{t}
&\le C\int_0^{2r_B}\int_{2B} \left(|t^2\partial^2_t \PV_tg||\PV_tg-c_0|+\frac{t^2}{r_B^2}  |\PV_tg-c_0|^2 \right)\frac{\d \mu\d t}{t}\nonumber\\
&\ +C\int_0^{2r_B}\int_{2B} t^2|\PV_t g||\PV_tg-c_0| V \frac{\d \mu\d t}{t}.
\end{align}
\end{proposition}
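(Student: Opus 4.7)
The argument is a Caccioppoli-type estimate based on the fact that $u(x,t):=\PV_tg(x)$ classically solves the Schr\"odinger equation $\LV u=\partial_t^2 u$ on $X\times\rr_+$. Indeed, the integrability hypothesis on $g$ together with Proposition~\ref{est-poisson-kernel} ensures that $u$ is well defined, and the spectral calculus of $\LV$ (equivalently, the semigroup property of $\PV_t$) shows that $u(\cdot,t)$ lies in the domain of every power of $\LV$, hence in $W^{1,2}_\loc(X)$, and that $u$ is $C^\infty$ in $t>0$ with $\LV u=\partial_t^2 u$ pointwise in $t$ and weakly in $x$. In particular, for every $t>0$ and every compactly supported $\phi\in W^{1,2}(X)$,
\[
\int_X \langle\nabla_x u,\nabla_x\phi\rangle\,\d\mu+\int_X Vu\,\phi\,\d\mu = \int_X \partial_t^2 u\cdot\phi\,\d\mu.
\]

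Fix cutoffs $\eta\in\mathrm{Lip}(X)$ with $\eta\equiv 1$ on $B$, $\supp\eta\subset 2B$ and $|\nabla_x\eta|\le C/r_B$, and $\psi\in\mathrm{Lip}(\rr_+)$ with $\psi\equiv 1$ on $[0,r_B]$, $\supp\psi\subset[0,2r_B]$ and $0\le\psi\le 1$. At each $t>0$, test the identity above with
\[
\phi(x):=t\,(u(x,t)-c_0)\,\eta(x)^2\,\psi(t)^2,
\]
treating $t$ and $\psi(t)$ as constants in $x$. Since $\nabla_x\phi=t\eta^2\psi^2\nabla_x u+2t(u-c_0)\eta\psi^2\nabla_x\eta$, the resulting pointwise (in $t$) identity reads
\begin{align*}
&\int_X t\eta^2\psi^2|\nabla_x u|^2\,\d\mu + 2\int_X t(u-c_0)\eta\psi^2\langle\nabla_x u,\nabla_x\eta\rangle\,\d\mu + \int_X tVu(u-c_0)\eta^2\psi^2\,\d\mu \\
&\qquad = \int_X t\partial_t^2 u\cdot(u-c_0)\eta^2\psi^2\,\d\mu.
\end{align*}
Integrating in $t$ (effectively on $(0,2r_B]$ by the support of $\psi$), applying Young's inequality $2|ab|\le\tfrac12 a^2+2b^2$ to the cross term to absorb one half of $\int\!\!\int t\eta^2\psi^2|\nabla_x u|^2$ into the left, using $|\nabla_x\eta|\le C/r_B$, and exploiting $\mathbbm{1}_{B}\le\eta$ and $\mathbbm{1}_{[0,r_B]}\le\psi$ on the left with $\eta\le\mathbbm{1}_{2B}$ and $\psi\le\mathbbm{1}_{[0,2r_B]}$ on the right, one rewrites $t\,\d\mu\,\d t=t^2\,\frac{\d\mu\,\d t}{t}$ to obtain \eqref{key-est-2}. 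For \eqref{key-est}, taking $c_0=0$ makes $tVu(u-c_0)=tVu^2\ge 0$, so this nonnegative term may be kept on the left of the identity and dropped, eliminating the $V$-integral from the bound.

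The main technical point is that $\int_0^{r_B}\!\int_B t|\nabla_x u|^2\,\d\mu\,\d t$ is not a priori known to be finite. This is handled by first performing the computation on $t\in(\epsilon,2r_B]$, where every quantity is finite by the smoothness of $u$ on $X\times(0,\infty)$, and then letting $\epsilon\to 0^+$ by monotone convergence on the left (the integrand is nonnegative) while keeping the right-hand side integral over $(0,2r_B]$; no derivative of a time cutoff appears because the weak equation is applied in the $x$-variable at each fixed $t$. This delivers the stated inequalities.
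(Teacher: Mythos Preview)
Your approach is essentially the same Caccioppoli argument as the paper's: fix spatial and temporal cutoffs, test the identity $\L u=\partial_t^2 u-Vu$ against $t(u-c_0)\eta^2\psi^2$, absorb the cross term via Young's inequality, and drop the nonnegative $V$-term when $c_0=0$. The paper uses a time cutoff $\phi_\epsilon$ supported in $(\epsilon,2r_B)$ rather than your $\psi$ supported in $[0,2r_B]$, but since you integrate only over $(\epsilon,2r_B]$ and then let $\epsilon\to 0$, the two devices are equivalent.

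One point deserves care: your justification that ``spectral calculus of $\LV$ shows $u(\cdot,t)$ lies in the domain of every power of $\LV$'' is not available as stated, because $g$ is not assumed to be in $L^2(X)$ (only the weighted $L^1$-type condition is given). The paper handles this by first using the kernel bounds of Proposition~\ref{est-poisson-kernel} to see that $\PV_tg$, $\partial_t\PV_tg$, $\partial_t^2\PV_tg$ are locally bounded, and then running a preliminary Caccioppoli estimate with a compactly supported space-time cutoff to conclude $\PV_tg\in W^{1,2}_\loc(X\times\rr_+)$; only after this is the test function $t(\varphi\phi_\epsilon)^2(\PV_tg-c_0)$ known to be admissible. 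Your pointwise-in-$t$ weak identity is correct, but its justification should go through this local regularity rather than $L^2$-spectral theory.
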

\begin{proof}
We take a Lipschitz function $\vz$ on $X$ with $\supp\vz\subset 2B$ such that $\vz=1$ on $B$ and  $|\nabla_x\vz|\le C/{r_B}$, and for each $\epsilon\in (0,r_B)$, take a $C^\infty(\rr)$ function $\phi_\epsilon(t)$
such that {$\supp \phi_\epsilon\subset (\epsilon,2r_B)$}, $\phi_\epsilon(t)=1$ on $(2\epsilon,r_B)$,
$|\partial_t\phi_\epsilon(t)|\le C/\epsilon$ for $t\in (\epsilon,2\epsilon)$, $|\partial_t\phi_\epsilon(t)|\le C/r_B$ for $t>r_B$.

Since $g$ satisfies
$$\int_{X}\frac{|g(x)|}{(1+d(x,y))\mu(B(y,1+d(x,y)))}\d\mu(x)<\infty,$$
we know from Proposition \ref{est-poisson-kernel} that
$\PV_t g$, $\partial_t\PV_t g$, $\partial^2_t\PV_t g$, are locally bounded.
Moreover, for any compactly supported Lipschitz function $\psi(x,t)$ on $X\times \rr_+$, it holds
\begin{align*}
\int_0^\fz\int_{X}|\nabla_x \PV_t g|^2\psi^ 2\d\mu\d t
&=\int_0^\fz\int_{X}\left[\langle\nabla_x \PV_t g, \nabla_x(\PV_t g \psi^ 2)\rangle
-2 \langle\nabla_x \PV_t g, \nabla_x\psi\rangle\psi\PV_t g\right] \d\mu\d t\\
&\le \int_0^\fz\int_{X}\langle\LV\PV_t g, \PV_t g \psi^ 2\rangle\d\mu\d t-\int_0^\fz\int_{X}\langle V\PV_t g, \PV_t g \psi^ 2\rangle\d\mu\d t\\
&\ +\frac12 \int_0^\fz\int_{X}|\nabla_x \PV_t g|^2\psi^ 2\d\mu\d t+ 2\int_0^\fz\int_{X}|\nabla_x \psi|^2 |\PV_t g|^2\d\mu\d t,
\end{align*}
and hence,
\begin{eqnarray*}
\int_0^\fz\int_{X}|\nabla_x \PV_t g|^2\psi^ 2\d\mu\d t
&&\le 2\int_0^\fz\int_{X}\langle \partial^2_t\PV_t g, \PV_t g \psi^ 2\rangle\d\mu \d t+4\int_0^\fz\int_{X}|\nabla_x \psi|^2 |\PV_t g|^2\d\mu \d t.
\end{eqnarray*}
This implies that $\PV_t g\in W^{1,2}_\loc(X\times\rr_+)$.

Let $c_0$ be a constant. As $t(\vz\phi_\epsilon)^2\PV_tg\in W^{1,2}(X\times\rr_+)$  with compact support in $2B\times (\epsilon,2r_B)$, we have
\begin{align}\label{cacc-par-1}
\int_0^{2r_B}\int_{2B}|{t}\nabla_x \PV_tg|^2(\vz\phi_\epsilon)^2\d \mu\frac{\d t}{t}
&=\int_0^{2r_B}\int_{2B}|{t}\nabla_x (\PV_tg-c_0)|^2(\vz\phi_\epsilon)^2\d \mu\frac{\d t}{t}\nonumber\\
&=\int_0^{2r_B}\int_{2B} \langle \nabla_x \PV_tg, \nabla_x(t(\vz\phi_\epsilon)^2 (\PV_tg- c_0))\rangle \d \mu\d t\nonumber\\
&\  -\int_0^{2r_B}\int_{2B} \langle \nabla_x \PV_tg, \nabla_x \vz\rangle 2t\vz\phi_\epsilon^2 (\PV_tg- c_0)\d \mu\d t.
\end{align}
Since $\LV=\L+V$, we have
\begin{align}\label{cacc-par-2}
&\int_0^{2r_B}\int_{2B} \langle \nabla_x \PV_tg, \nabla_x(t(\vz\phi_\epsilon)^2 (\PV_tg- c_0))\rangle {\d\mu}\d t\nonumber\\
&\ =\int_0^{2r_B}\int_{2B} \langle \L \PV_tg, t(\vz\phi_\epsilon)^2 (\PV_tg-c_0)\rangle \d \mu\d t\nonumber\\
&\ =\int_0^{2r_B}\int_{2B} \langle \partial^2_t \PV_tg, t(\vz\phi_\epsilon)^2 (\PV_tg-c_0)\rangle \d \mu\d t\nonumber\\
&\ \ -\int_0^{2r_B}\int_{2B} V \langle \PV_tg, t(\vz\phi_\epsilon)^2(\PV_tg-c_0) \rangle \d \mu\d t .
\end{align}

{\bf Case 1}. Suppose first $c_0\neq 0$. Then a combination of \eqref{cacc-par-1} and \eqref{cacc-par-2}
yields that
\begin{align*}
&\int_{0}^{2r_B}\int_{2B}|{t}\nabla_x \PV_tg|^2(\vz\phi_\epsilon)^2\frac{\d\mu\d t}{t}\\
&\ \le C\int_0^{2r_B}\int_{2B} \left(|t^2\partial^2_t \PV_tg||\PV_tg-c_0|+t^2|\PV_t g||\PV_tg-c_0| V \right)\frac{\d \mu\d t}{t}\\
&\ +\int_0^{2r_B}\int_{2B} \left( \frac12|t\nabla_x \PV_tg|^2(\varphi\phi_\epsilon)^2
 +{2t^2}|\nabla_x \varphi|^2 \phi_\epsilon^2|\PV_tg-c_0|^2\right)\frac{\d \mu\d t}{t},
\end{align*}
and hence, as $|\nabla_x \varphi|\le C/r_B$,
\begin{align*}
\int_{2\epsilon}^{r_B}\int_{B}|{t}\nabla_x \PV_tg|^2\d\mu\frac{\d t}{t}
&\le \int_0^{2r_B}\int_{2B}|{t}\nabla_x \PV_tg|^2(\vz\phi_\epsilon)^2{\d\mu}\frac{\d t}{t}\\
&\le C\int_0^{2r_B}\int_{2B} \left(|t^2\partial^2_t \PV_tg||\PV_tg-c_0|+t^2|\PV_t g||\PV_tg-c_0| V \right)\frac{\d \mu\d t}{t}\\
&\ +C\int_0^{2r_B}\int_{2B} \left(\frac{t^2}{r_B^2}|\PV_tg-c_0|^2\right)\frac{\d \mu\d t}{t}.
\end{align*}
The estimate \eqref{key-est-2} follows by letting $\epsilon\to 0$.

{\bf Case 2}. Let $c_0=0$.  Note that as $V\ge 0$,
\begin{align*}
\int_0^{2r_B}\int_{2B} \langle \nabla_x \PV_tg, \nabla_x(t(\vz\phi_\epsilon)^2 \PV_tg)\rangle \d \mu\d t
&= \int_0^{2r_B}\int_{2B} \langle \partial^2_t \PV_tg, t(\vz\phi_\epsilon)^2 \PV_tg\rangle \d \mu\d t\\
&\ -\int_0^{2r_B}\int_{2B} t(\PV_t g)^2 (\vz\phi_\epsilon)^2V \d \mu\d t \\
&\le  \int_0^{2r_B}\int_{2B} \langle \partial^2_t \PV_tg, t(\vz\phi_\epsilon)^2 \PV_tg\rangle \d \mu\d t.
\end{align*}
We therefore deduce from this and \eqref{cacc-par-1} that
\begin{align*}
\int_{0}^{2r_B}\int_{2B}|{t}\nabla_x \PV_tg|^2(\vz\phi_\epsilon)^2\d\mu\frac{\d t}{t}
&\le \int_0^{2r_B}\int_{2B} \langle \partial^2_t \PV_tg, t(\vz\phi_\epsilon)^2 \PV_tg\rangle \d \mu\d t \\
&\ +\int_0^{2r_B}\int_{2B}   |\nabla_x \PV_tg| \frac{Ct}{r_B}\vz\phi_\epsilon^2 |\PV_tg| \d \mu\d t\\
&{\le C}\int_0^{2r_B}\int_{2B} \left(|t^2\partial^2_t \PV_tg||\PV_tg|\right)\frac{\d \mu\d t}{t} \\
&\ +\int_0^{2r_B}\int_{2B}  \left(\frac 12 |t\nabla_x \PV_tg| ^2(\vz\phi_\epsilon)^2 {+2t^2|\nabla_x\vz|^2}\phi_\epsilon^2 |\PV_tg|^2\right)\frac{\d \mu\d t}{t}.
\end{align*}
Hence, one has
\begin{align*}
\int_{2\epsilon}^{r_B}\int_{B}|{t}\nabla_x \PV_tg|^2\d\mu\frac{\d t}{t}
&\le \int_{0}^{2r_B}\int_{2B}|{t}\nabla_x \PV_tg|^2(\vz\phi_\epsilon)^2\d\mu\frac{\d t}{t}\\
&\le {C}\int_0^{2r_B}\int_{2B} \left(|t^2\partial^2_t \PV_tg||\PV_tg|  +{\frac{t^2}{r_B^2}}\phi_\epsilon^2 |\PV_tg|^2\right)\frac{\d \mu\d t}{t}\\
&\le C\int_0^{2r_B}\int_{2B} \left(|t^2\partial^2_t \PV_tg||\PV_tg|+\frac{t^2}{r_B^2}  |\PV_tg|^2 \right)\frac{\d\mu\d t}{t}.
\end{align*}
Letting $\epsilon\to 0$, we see the estimate \eqref{key-est} holds.
\end{proof}

We can now finish the proof of the last part of the main result.
\begin{proposition}\label{bmoToHmo}
 Let $(X,d,\mu,\mathscr{E})$ be a complete Dirichlet metric space satisfying $(D)$
  and admitting an $L^2$-Poincar\'{e} inequality $(P_2)$.
 Suppose that $V\in RH_q(X)\cap A_\infty(X)$ with $q>\max\{1,Q/2\}$.
  Then if $f\in \BMO_\LV(X)$, $u(x,t)=\PV_tf(x)\in \HMO_\LV(X\times\rr_+)$.
Moreover there exists a constant $C>1$ such that
$$\|u\|_{\HMO_{\LV}} \le C\|f\|_{\BMO_\LV}.$$
  \end{proposition}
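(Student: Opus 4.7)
Since Lemma~\ref{est-time-carleson} already controls the time derivative $t\partial_t\PV_t f$, the task reduces to the spatial Carleson estimate
$$\int_0^{r_B}\int_B |t\nabla_x \PV_t f|^2\,\frac{\d\mu\,\d t}{t} \le C\mu(B)\|f\|^2_{\BMO_\LV}$$
for every ball $B=B(x_B,r_B)$. The crucial tool is Proposition~\ref{key-cacc}, which trades the spatial gradient for quantities involving $\PV_t f-c_0$, $t^2\partial_t^2\PV_t f$ and (if $c_0\neq0$) the potential $V$. I decompose $f = f_1+f_2+f_3$ with $f_1:=(f-f_{4B})\mathbbm{1}_{4B}$, $f_2:=(f-f_{4B})\mathbbm{1}_{X\setminus 4B}$, and $f_3:=f_{4B}$, and treat the three pieces by different methods.

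For $f_1\in L^2(X)$, I invoke spectral theory directly: the bound $\int_X|\nabla_x\PV_tf_1|^2\d\mu\le(\LV\PV_tf_1,\PV_tf_1)_2=\|\sqrt{\LV}\PV_tf_1\|_2^2$ together with Plancherel for the spectral decomposition of $\LV$ yields $\int_0^\infty\|t\nabla_x\PV_tf_1\|_2^2\,\d t/t\le\tfrac14\|f_1\|_2^2$, and the John--Nirenberg bound gives $\|f_1\|_2^2\le C\mu(B)\|f\|_{\BMO_\LV}^2$. For $f_2$, whose support lies in $X\setminus 4B$, Proposition~\ref{est-poisson-kernel}(i) combined with the telescoping annular argument used for \eqref{est-bmo-carleson-2v} yields the pointwise bounds $|\PV_tf_2(x)|+|t^2\partial_t^2\PV_tf_2(x)|\le C(t/r_B)\|f\|_{\BMO_\LV}$ whenever $x\in 2B$ and $0<t<2r_B$. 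Feeding these into \eqref{key-est} with $g=f_2$ produces an integrand of size $O((t/r_B)^2\|f\|_{\BMO_\LV}^2)$, whose $\d\mu\,\d t/t$-integral on $2B\times(0,2r_B)$ is bounded by $C\mu(B)\|f\|_{\BMO_\LV}^2$.

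The main obstacle is the constant piece $f_3=f_{4B}$, since $|f_{4B}|$ is not controlled by $\|f\|_{\BMO_\LV}$ alone when $r_B\ll\rho(x_B)$ --- one only has $|f_{4B}|\le C(1+\log^+(\rho(x_B)/r_B))\|f\|_{\BMO_\LV}$. I split into two subcases. When $r_B\ge\rho(x_B)$, the $\BMO_\LV$ definition forces $|f_{4B}|\le C\|f\|_{\BMO_\LV}$, so applying \eqref{key-est} with $g=f_3$ and using the pointwise bound $|t^2\partial_t^2\PV_t(1)(x)|\le C(t/\rho(x))^\delta(1+t/\rho(x))^{-N}$ from Proposition~\ref{est-poisson-kernel}(ii), together with the elementary convergent integral $\int_0^\infty s^{\delta-1}(1+s)^{-N}\,\d s$, gives a contribution of size $C|f_{4B}|^2\mu(B)\le C\mu(B)\|f\|_{\BMO_\LV}^2$. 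When $r_B<\rho(x_B)$, Lemma~\ref{crit-func-pro-1} gives $\rho(x)\sim\rho(x_B)>r_B$ uniformly on $2B$, and integrating Proposition~\ref{est-poisson-kernel}(ii) in $t$ yields $|\PV_t(1)(x)-1|\le C(t/\rho(x))^\delta$. I apply \eqref{key-est-2} with $c_0=f_{4B}$ (assuming it is nonzero; otherwise $f_3\equiv 0$): the two non-potential terms are both of size $C|f_{4B}|^2(r_B/\rho(x_B))^{2\delta}\mu(B)$, and the extra factor $(r_B/\rho(x_B))^{2\delta}$ absorbs the squared logarithm in $|f_{4B}|^2$ because $s^{2\delta}\log^2(1/s)$ is bounded on $(0,1]$. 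For the $V$ term, Lemma~\ref{crit-func-pro-2} gives $r_B^2\int_{2B}V\,\d\mu\le C\mu(B)(r_B/\rho(x_B))^{2-Q/q}$, and combined with the $(t/\rho)^\delta$ decay of $|\PV_t(1)-1|$ this term is dominated by $C|f_{4B}|^2\mu(B)(r_B/\rho(x_B))^{\delta+2-Q/q}$, once more beating the logarithm since $q>Q/2$ forces $\delta+2-Q/q>0$. Summing the three contributions over $f_1,f_2,f_3$ yields the desired Carleson bound, and the proposition follows.
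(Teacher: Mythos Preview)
Your proof is correct and follows essentially the same four-step decomposition as the paper: the $L^2$ piece $f_1$ via spectral/Riesz bounds, the far piece $f_2$ via pointwise Poisson kernel estimates fed into \eqref{key-est}, and the constant piece $f_3$ split according to whether $r_B\gtrsim\rho(x_B)$ (use \eqref{key-est}) or $r_B\ll\rho(x_B)$ (use \eqref{key-est-2} with $c_0=f_{4B}$, exploiting the $(t/\rho)^\delta$ gain to kill the logarithm). The only cosmetic difference is your derivation of $|\PV_t(1)(x)-1|\le C(t/\rho(x))^\delta$ by integrating Proposition~\ref{est-poisson-kernel}(ii) in $t$, whereas the paper obtains it from Bochner subordination and Proposition~\ref{heat-kernel-diff}; your route is cleaner but tacitly uses $\lim_{t\to 0^+}\PV_t(1)(x)=1$, which follows from $P_t(1)\equiv 1$ and the same heat-kernel comparison.
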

\begin{proof}
By Lemma \ref{est-time-carleson}, we are left to estimate $|{t}\nabla_x \PV_t(f)|^2\d \mu\frac{\d t}{t}$. We set
$f_1:=(f-f_{4B})\mathbbm{1}_{4B}$, $f_2:=(f-f_{4B})\mathbbm{1}_{X\setminus 4B}$
and $f_3:=f_{4B}$.

{\bf Step 1}.
For the term $f_1$, it follows from the $L^2$-boundedness of the Riesz operator $\nabla_x\LV^{-1/2}$ that
\begin{eqnarray*}
\int_0^{r_B}\int_B|{t}\nabla_x \PV_t(f_1)|^2\d \mu\frac{\d t}{t}
&&\le {C}\int_0^{\infty}\int_{X}|{t}\sqrt \LV \PV_t(f_1)|^2\d \mu\frac{\d t}{t}\\
&&\le {C}\|f_1\|_{L^2(X)}\le C\mu(B)\,\|f\|^2_{\BMO_\LV}.
\end{eqnarray*}

{\bf Step 2.} Let us estimate $f_2$ in this step. By  \eqref{est-poisson-1} from Proposition \ref{est-poisson-kernel}, similar to \eqref{poisson-bmo} and \eqref{est-bmo-carleson-2v}, we conclude that for any $x\in 2B$ and $0<t<2r_B$
\begin{eqnarray*}
|t^2\partial^2_t \PV_t(f_2)(x)|&&\le C\int_{X\setminus 4B}\frac{t|f(y)-f_{4B}|}{(t+d(x,y))\mu(B(x,t+d(x,y)))}\d \mu(y)\le  Ct  r_B^{-1}\|f\|_{\BMO_\LV},
\end{eqnarray*}
and similarly,
\begin{eqnarray*}
|\PV_t(f_2)(x)|&&\le C\int_{X\setminus 4B}\frac{t|f(y)-f_{4B}|}{(t+d(x,y))\mu(B(x,t+d(x,y)))}\d \mu(y) \le Ct r_B^{-1}\|f\|_{\BMO_\LV}.
\end{eqnarray*}

Either one of the above two estimates also show that $f_2$ satisfies the requirement from Proposition
\ref{key-cacc}. We then apply {the above two estimates} to conclude that
\begin{eqnarray*}
\int_{0}^{r_B}\int_{B}|{t}\nabla_x \PV_tf_2|^2\d\mu\frac{\d t}{t}
 &&\le C\int_0^{2r_B}\int_{2B} \left(|t^2\partial^2_t\PV_t{f_2}||\PV_t{f_2}|+\frac{t^2}{r_B^2} |\PV_t{f_2}|^2 \right)\frac{\d \mu\d t}{t}\\
 && \le C\int_0^{2r_B}\int_{2B} \left(\frac{t^2}{r_B^{2}}\|f\|_{\BMO_\LV}^2 +\frac{t^3}{r_B^{3}}\|f\|_{\BMO_\LV}^2  \right)\frac{\d \mu\d t}{t}\\
&& \le C\mu(B)\|f\|_{\BMO_\LV}^2.
\end{eqnarray*}

{\bf Step 3.} In this step, we deal with $f_3$ at the case  $4r_B\ge \rho(x_B)$.
 By using \eqref{est-poisson-3},
 it holds for any $x\in 2B$, $0<t<2r_B$ and any  $N>0$ that
\begin{eqnarray*}
|t^2\partial^2_t \PV_t(f_{4B})(x)|\le C \left(\frac{t}{\rho(x)}\right)^\delta \left(1+\frac{t}{\rho(x)}\right)^{-N}|f_{4B}|\le  C\left(\frac{t}{\rho(x)}\right)^\delta \left(1+\frac{t}{\rho(x)}\right)^{-N}\|f\|_{\BMO_\LV},
\end{eqnarray*}
{where $\delta\in(0,\min\{1,2-Q/q\})$}, and
\begin{eqnarray*}
|\PV_t(f_{4B})(x)|\le C|f_{4B}|\le C\|f\|_{\BMO_\LV}.
\end{eqnarray*}
As $4r_B\ge \rho(x_B)$, by using the estimate \eqref{key-est} from Proposition \ref{key-cacc}, we repeat the argument in the proof of \eqref{est-bmo-carleson-4}
to conclude that
\begin{eqnarray*}
\int_{0}^{r_B}\int_{B}|{t}\nabla_x \PV_tf_{4B}|^2\d\mu\frac{\d t}{t}
&&\le C\int_0^{2r_B}\int_{2B} \left(|t^2\partial^2_t\PV_t{f_{4B}}||\PV_t{f_{4B}}|+\frac{t^2}{r_B^2} |\PV_t{f_{4B}}|^2 \right)\frac{\d \mu\d t}{t}\\
&&\le C\int_0^{2r_B}\int_{2B} \left[\left(\frac{t}{\rho(x)}\right)^\delta \left(1+\frac{t}{\rho(x)}\right)^{-N}{\|f\|_{\BMO_\LV}^2}
 +\frac{t^2}{r_B^2}\|f\|_{\BMO_\LV}^2\right]\frac{\d \mu(x)\d t}{t}\\
  &&\le C\mu(B)\|f\|_{\BMO_\LV}^2.
\end{eqnarray*}

{\bf Step 4.} In the last step, we treat $f_3$ for the case  $4r_B<\rho(x_B)$.

Note that  $4r_B<\rho(x_B)$,  it holds $\rho(x)\sim\rho(x_B)$ for any $x\in 2B$.
By using \eqref{est-poisson-3} from Proposition \ref{est-poisson-kernel}, it holds
for any $x\in 2B$, $0<t<2r_B$ and $N>0$ that
\begin{eqnarray}\label{est-small-1}
|t^2\partial^2_t \PV_tf_{4B}(x)|&&\le C\left(\frac{t}{\rho(x_B)}\right)^\delta |f_{4B}|
\le  C\left(\frac{t}{\rho(x_B)}\right)^\delta \left(|f_{4B}-f_{B(x_B,\rho(x_B))}|+|f_{B(x_B,\rho(x_B))}|\right)\nonumber \\
 &&\le C\left(\frac{t}{\rho(x_B)}\right)^\delta \left(1+\log\left(\frac{\rho(x_B)}{r_B}\right)\right)\|f\|_{\BMO_\LV},
\end{eqnarray}
{where $\delta\in(0,\min\{1,2-Q/q\})$}, and
\begin{eqnarray}\label{est-small-2}
|\PV_tf_{4B}(x)|\le C|f_{4B}|\le C\left(1+\log\left(\frac{\rho(x_B)}{r_B}\right)\right)\|f\|_{\BMO_\LV}.
\end{eqnarray}
Since $e^{-t\sqrt{\L}}1\equiv1$ for all $t$, we have via Bochner's subordination formula that
\begin{eqnarray*}
|\PV_tf_{4B}(x)-f_{4B}|&&=|f_{4B}||e^{-t\sqrt \LV}(1)(x)-{e^{-t\sqrt {\L}}}(1)(x)|\\
&&\le \frac{|f_{4B}|}{2\sqrt{\pi}}\int_{X}\int_0^\fz\frac{t}{s^{1/2}}
\exp\lf\{-\frac{t^2}{4s}\r\}|h^v_s(x,y)-h_s(x,y)|\frac{\d s}{s}\d \mu(y),
\end{eqnarray*}
where $h^v$, $h$ are the kernels of $e^{-t\LV}$ and $e^{-t\L}$, respectively.
By Proposition \ref{heat-kernel-diff} and the fact that $\delta\in(0,\min\{1,2-Q/q\})$,
it holds for any $x\in X$,
\begin{eqnarray*}
|\PV_tf_{4B}(x)-f_{4B}|&&\le C|f_{4B}|\int_{X}\int_0^\fz\frac{t}{s^{1/2}}\exp\lf\{-\frac{t^2}{4s}\r\}\left(\frac{\sqrt s}{\sqrt s+\rho(x)}\right)^{2-Q/q} \frac{\exp\left\{-\frac{d(x,y)^2}{cs}\right\}}{\mu(B(x,\sqrt s))}\frac{\d s}{s}\d\mu(y) \\
&&\le C|f_{4B}|\int_0^\fz\int_{X}\frac{t}{s^{1/2}}\exp\lf\{-\frac{t^2}{4s}\r\}\left(\frac{\sqrt s}{\sqrt s+\rho(x)}\right)^{\delta} \frac{\exp\left\{-\frac{d(x,y)^2}{cs}\right\}}{\mu(B(x,\sqrt s))} \d\mu(y)\frac{\d s}{s} \\
&&\le C|f_{4B}|\left(\int_0^{t^2}\frac{t}{s^{1/2}}\exp\lf\{-\frac{t^2}{4s}\r\}\left(\frac{\sqrt s}{\rho(x)}\right)^{\delta} \frac{\d s}{s}+\int_{t^2}^\infty\frac{t}{s^{1/2}}\exp\lf\{-\frac{t^2}{4s}\r\}\left(\frac{\sqrt s}{\rho(x)}\right)^{\delta}\frac{\d s}{s}\right) \\
&&\le C|f_{4B}|\left(\int_0^{t^2}\frac{t}{s^{1/2}}\frac{4s}{t^2}\left(\frac{\sqrt s}{\rho(s)}\right)^{\delta} \frac{\d s}{s}
 +\int_{t^2}^\infty\frac{t}{s^{1/2}}\left(\frac{\sqrt s}{\rho(x)}\right)^{\delta}\frac{\d s}{s}\right) \\
&&\le C|f_{4B}|\left(\frac{t}{\rho(x)}\right)^{\delta}.
\end{eqnarray*}
Above in the last inequality the integral over $(t^2,\infty)$
is convergent since $\dz<1$.
Noting that as $4r_B<\rho(x_B)$, $\rho(x)\sim\rho(x_B)$ for any $x\in 2B$, we conclude that
\begin{eqnarray}\label{est-small-3}
|\PV_tf_{4B}(x)-f_{4B}|&&\le C\left(\frac{t}{\rho(x)}\right)^{\delta} |f_{4B}|\le C\left(\frac{t}{\rho(x_B)}\right)^{\delta} \left(1+\log\left(\frac{\rho(x_B)}{r_B}\right)\right)\|f\|_{\BMO_\LV}.
\end{eqnarray}
Combining the estimates \eqref{est-small-1}, \eqref{est-small-2},
 \eqref{est-small-3}, together with the estimate \eqref{key-est-2} in Proposition \ref{key-cacc}, we arrive at
\begin{align*}
&\int_{0}^{r_B}\int_{B}|{t}\nabla_x \PV_tf_{4B}|^2\d\mu\frac{\d t}{t}\\
&\ \le C\int_0^{2r_B}\int_{2B} \left(|t^2\partial^2_t \PV_tf_{4B}||\PV_tf_{4B}-f_{4B}|+\frac{t^2}{r_B^2}  |\PV_tf_{4B}-f_{4B}|^2 \right)\frac{\d \mu\d t}{t}\\
&\ \ +\int_0^{2r_B}\int_{2B} t|\PV_t f_{4B}||\PV_tf_{4B}-f_{4B}| V \d \mu\d t\\
&\ \le C\int_0^{2r_B}\int_{2B}  \left(\frac{t}{\rho(x_B)}\right)^{2\delta} \left(1+\log\left(\frac{\rho(x_B)}{r_B}\right)\right)^2\|f\|_{\BMO_\LV}^2  \frac{\d \mu\d t}{t}\\
&\ \ + C\int_0^{2r_B}\int_{2B}  \frac{t^2}{r_B^2} \left(\frac{t}{\rho(x_B)}\right)^{2\delta}
 \left(1+\log\left(\frac{\rho(x_B)}{r_B}\right)\right)^2\|f\|_{\BMO_\LV}^2  \frac{\d \mu\d t}{t}\\
&\ \ +C\int_0^{2r_B}\int_{2B} t \left(\frac{t}{\rho(x_B)}\right)^{\delta} \left(1+\log\left(\frac{\rho(x_B)}{r_B}\right)\right)^2\|f\|_{\BMO_\LV}^2 V \d \mu\d t\\
&\ \le C\|f\|_{\BMO_\LV}^2 \left(1+\log\left(\frac{\rho(x_B)}{r_B}\right)\right)^2\left[\left(\frac{r_B}
{\rho(x_B)}\right)^{2\delta}\mu(B) +  r_B^2 \left(\frac{r_B}{\rho(x_B)}\right)^{\delta}\int_BV\d\mu \right]\\
&\ \le C\|f\|_{\BMO_\LV}^2 \left(1+\log\left(\frac{\rho(x_B)}{r_B}\right)\right)^2\left[\left(\frac{r_B}
{\rho(x_B)}\right)^{2\delta}\mu(B) +  \left(\frac{r_B}{\rho(x_B)}\right)^{\delta}\mu(B) \right]\\
&\ \le C\mu(B)\|f\|_{\BMO_\LV}^2 \left(1+\log\left(\frac{\rho(x_B)}{r_B}\right)\right)^2\left(\frac{r_B}{\rho(x_B)}\right)^{\delta}\\
&\ \le C\mu(B)\|f\|_{\BMO_\LV}^2.
\end{align*}
Above, in the last third inequality, we used the fact $\int_BV{\d\mu} \le C\mu(B)r_B^{-2}$, as $4r_B<\rho(x_B)$.

Finally, we conclude from the four steps that,
\begin{eqnarray*}
\left(\int_{0}^{r_B}\int_{B}|{t}\nabla_x \PV_tf|^2\d\mu\frac{\d t}{t}\right)^{1/2}&&\le \sum_{k=1}^3
\left(\int_{0}^{r_B}\int_{B}|{t}\nabla_x \PV_tf_k|^2\d\mu\frac{\d t}{t}\right)^{1/2} \le C\mu(B)^{1/2}\|f\|_{\BMO_\LV},
\end{eqnarray*}
which together with Lemma \ref{est-time-carleson} implies
$$\|\PV_tf\|_{\HMO_\LV}=\sup_{B(x_B,r_B)}\left(\int_{0}^{r_B}\fint_{B}|{t}(\partial_t,\nabla_x) \PV_tf|^2\d\mu\frac{\d t}{t}\right)^{1/2}\le C\|f\|_{\BMO_\LV}.$$
This completes the proof.
\end{proof}

We can now finish the proof of Theorem \ref{thm2}.
\begin{proof}[Proof of Theorem \ref{thm2}]
Since $V\in RH_{q}(X)$ with $q\ge (Q+1)/2$, there exists $\epsilon>0$
such that $V\in RH_{q+\epsilon}(X)$.

Part (i) follows from Proposition \ref{HmotoBmo}, while part (ii) follows from
Proposition \ref{bmoToHmo}.
\end{proof}

\begin{remark}\rm
One may wonder that if it is possible to relax the requirement $q\ge (Q+1)/2$
to $q>1$ together with $q\ge Q/2$ in Theorem \ref{thm2}.
For part (ii) of Theorem \ref{thm2}, this is ensured by Proposition \ref{bmoToHmo}.
However, the requirement $q\ge (Q+1)/2$ seems essential in  part (i).
In fact, for a solution of the Schr\"odinger equation $\LV u-\partial_t^2 u=0$ on $X\times\rr_+$,
$V\in RH_q$ for $q\ge (Q+1)/2$  seems to be the least condition for $u$ to be continuous,
and also for the Liouville property (Lemma \ref{Liouville-upper-space}).
\end{remark}

\section*{Acknowledgments}
\addcontentsline{toc}{section}{Acknowledgments} \hskip\parindent
R. Jiang would like to thank Professor Lixin Yan for many helpful discussions during the preparation
of this work. This work is supported by NNSF of China (11922114, 11671039 \& 11771043).

\medskip

\noindent Center for Applied Mathematics, Tianjin University, Tianjin 300072, P. R. China \smallskip \\
\noindent{E-mail}: \texttt{rejiang@tju.edu.cn}\ \& \texttt{bli.math@outlook.com}

\end{document}